\numberwithin{equation}{section}
\title[Recognisability for generalised FLC pattern spaces]{\bf Recognisability for generalised hierarchical pattern spaces of finite local complexity}
\date{\today}
\author{James J.\ Walton} 
\address{School of Mathematical Sciences, Mathematical Sciences Building, University Park, Nottingham, NG7 2RD, United Kingdom}
\email{Jamie.Walton@nottingham.ac.uk}
\urladdr{https://www.nottingham.ac.uk/mathematics/people/jamie.walton}
\theoremstyle{plain}
\newtheorem{theorem}{Theorem}[section]
\newtheorem{lemma}[theorem]{Lemma}
\newtheorem{proposition}[theorem]{Proposition}
\newtheorem{corollary}[theorem]{Corollary}
\newtheorem{question}[theorem]{Question}
\newenvironment{customthm}[1]
  {\innercustomthm}
  {\endinnercustomthm}
\newenvironment{customcor}[1]
  {\innercustomcor}
  {\endinnercustomcor}
\theoremstyle{definition}
\newtheorem{definition}[theorem]{Definition}
\newtheorem{remark}[theorem]{Remark}
\newtheorem{example}[theorem]{Example}
\newtheorem{notation}[theorem]{Notation}
\newcommand{\exend}{\hfill \ensuremath{\Diamond}}
\newcommand{\R}{\mathbb R}
\newcommand{\Z}{\mathbb Z}
\newcommand{\Q}{\mathbb Q}
\newcommand{\N}{\mathbb N}
\newcommand{\sH}{\mathscr{H}}
\newcommand{\cB}{\mathcal{B}}
\newcommand{\subpatch}{\triangleleft}
\renewcommand{\epsilon}{\varepsilon}
\DeclareMathOperator{\id}{\mathrm{id}}
\newcommand{\sub}{\sigma}
\DeclareMathOperator{\cP}{\mathcal{P}} 
\DeclareMathOperator{\cQ}{\mathcal{Q}} 
\DeclareMathOperator{\cR}{\mathcal{R}}
\DeclareMathOperator{\cU}{\mathcal{U}}
\DeclareMathOperator{\cV}{\mathcal{V}} 
\DeclareMathOperator{\cK}{\mathcal{K}}
\DeclareMathOperator{\cT}{\mathcal{T}}
\DeclareMathOperator{\cL}{\mathcal{L}}
\DeclareMathOperator{\cW}{\mathcal{W}}
\newcommand{\xsra}[1]{\overset{#1}{\rightsquigarrow}}
\newcommand{\xsrla}[1]{\overset{#1}{\leftrightsquigarrow}}
\DeclareMathOperator{\LD}{\scalebox{0.8}{\(\xsra{\mathrm{LD}}\)}} 
\DeclareMathOperator{\MLD}{\scalebox{0.8}{\(\xsrla{\mathrm{MLD}}\)}}
\DeclareMathOperator{\LDmap}{\scalebox{0.8}{\(\xrightarrow{\mathrm{LD}}\)}} 
\newcommand{\xsqsubset}[1]{\overset{#1}{\sqsubset}}
\newcommand{\xin}[1]{\overset{#1}{\in}}
\DeclareMathOperator{\LI}{\scalebox{0.8}{\(\xsqsubset{\mathrm{LI}}\)}}
\newcommand{\xsimeq}[1]{\overset{#1}{\simeq}}
\DeclareMathOperator{\LIs}{\scalebox{0.8}{\(\xsimeq{\mathrm{LI}}\)}}
\DeclareMathOperator{\LIin}{\scalebox{0.8}{\(\xin{\mathrm{LI}}\)}}
\newcommand{\xsubset}[1]{\overset{#1}{\subset}}
\DeclareMathOperator{\cpt}{\scalebox{0.8}{\(\xsubset{\mathrm{c}}\)}} 
\DeclareMathOperator{\nhd}{\scalebox{0.8}{\(\xsubset{\odot}\)}}
\DeclareMathOperator{\cptn}{\scalebox{0.8}{\(\xsubset{\mathrm{c-}\odot}\)}}
\newcommand{\lang}{\mathscr{L}}
\newcommand{\UC}{\(\mathrm{(UC)}\)}
\subjclass[2010]{Primary: 52C23; Secondary: 37B52, 37D99}
\keywords{Aperiodic order, recognisability, substitution, tilings, Delone sets}
\begin{document}

\begin{abstract}
We develop a general framework of Euclidean patterns and pattern spaces of translational finite local complexity (FLC), analogues of translational tiling spaces. The notion of a self affine substitution of tilings is extended to both individual patterns and pattern spaces, which we ask are mapped onto by a local derivation map (the analogue of a sliding block code map from Symbolic Dynamics) from its own expansion by a linear map. We prove recognisability for these, with no minimality requirements. In particular we show that, for each pattern in the space, its substitutional pre-images are translation equivalent. Sizes of all fibres are then determined by relative groups of translational periods. This answers an open question of Cortez and Solomyak, on whether non-periodic tilings necessarily have unique pre-images under substitution: they do, even for a wider notion of pattern and being substitutional. It is shown that there exists a power of the substitution under which any given pattern of the pattern space has multiple pre-images if and only if it has disconnected group of periods. This is equivalent to being periodic in the standard return discrete cases of tilings and Delone sets, but our results also cover examples with non-discrete groups of periods, such as spaces of uniformly discrete but non-relatively dense point sets.
\end{abstract}

\maketitle

\section{Introduction}

Many aperiodically ordered patterns exhibit a form of repeating hierarchy. Just as self-similar fractals have recurring features when zooming in, hierarchical patterns can have recurring structures when zooming out, so are like `upside down fractals'; this goes further than analogy, see for instance \cite{BGN03,WW24}. A famous example is the family of Penrose tilings \cite{Pen79}: on top of any Penrose tiling \(\cP\), one may place a \emph{supertiling} \(L\cP'\) that is an inflation \(L\) (scaling by the golden mean) of a `predecessor' Penrose tiling \(\cP'\) that derives the original by a replacement rule on inflated tiles; see Figure \ref{fig:pen_sub}. That \(\cP'\) is also a Penrose tiling means \(\cP'\) is locally indistinguishable from \(\cP\), that is, its finite patches are all also in \(\cP\) (modulo translation). As described in \cite{Pen79}, they were discovered through consideration of hierarchy, more precisely experimentation with substitution of pentagons, as illustrated in Figure \ref{fig:several subs}. Other important examples, though --- notably, the families of hat and spectre monotilings \cite{SMKG24a,SMKG24b} --- were initially found by other means and motivated by other properties (e.g., forcing non-periodicity with a single tile) yet were \emph{later} found to also be hierarchical. A wealth of hierarchical tilings are now known, constructed by substitution rules on prototile sets, which can be perused on the Tilings Encyclopedia \cite{TilEnc}.

\begin{figure}
\includegraphics[width=\textwidth]{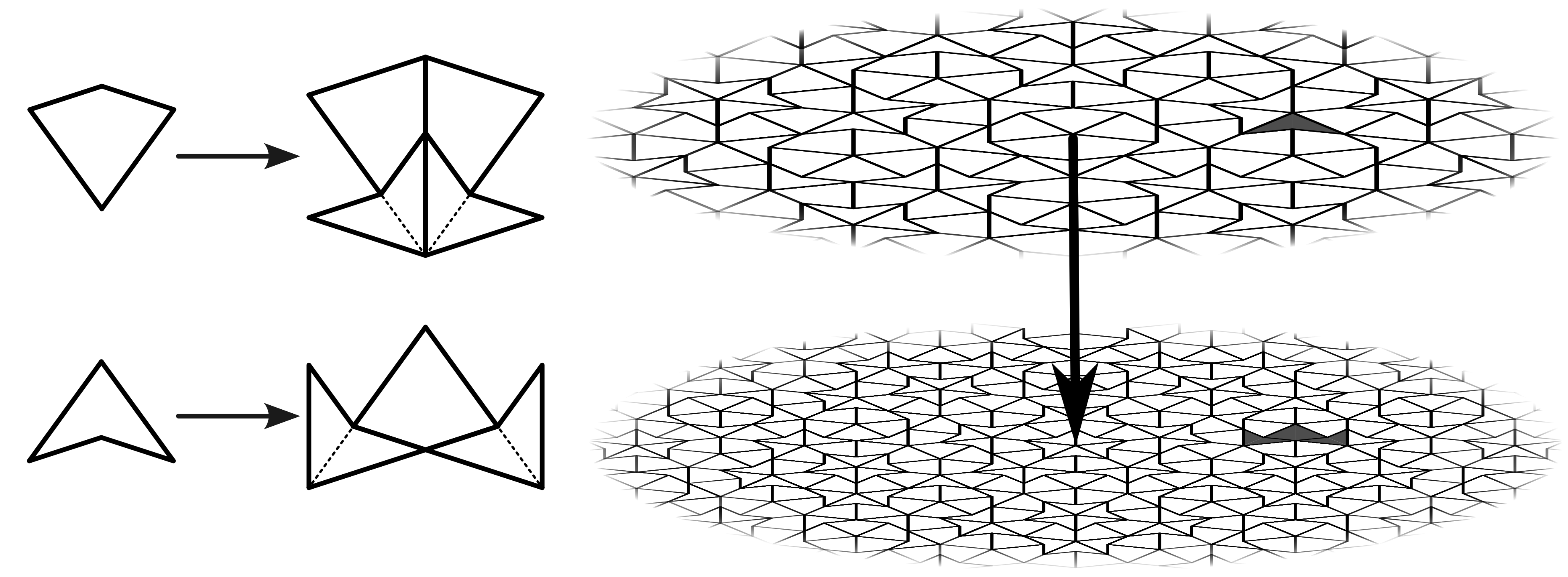}
\caption{The Penrose kite and dart substitution \(\sub\) (left) and a Penrose tiling \(\cP = \sub(\cP') = S(L\cP')\) (lower right) as the subdivision \(S\) of the unique supertiling \(L\cP'\) (upper right), with an example highlighted supertile and its replacement below.
}\label{fig:pen_sub}
\end{figure}

In addition to tilings, aperiodic patterns are often represented by point sets, typically Delone sets, which can be used to model idealised atomic positions of quasicrystals \cite{SBGC84,Hof95}. This is a particularly useful perspective for the cut and project (or `model set') construction \cite{Mey72,Moo00}, which more naturally defines point sets rather than tilings. There is also a standard notion of a Delone (multi)set being `substitutional' \cite{LW03}.

Recognisability results equate a form of injectivity of substitution to aperiodicity, which is integral to the structure of their associated dynamical systems. In Symbolic Dynamics, Moss\'{e}'s Theorem \cite{Mos92,Mos96} establishes recognisability for aperiodic subshifts generated by primitive substitution. This has been expanded to a wide class of non-minimal subshifts \cite{BKM09,BPR23}, and also \(S\)-adic systems \cite{BSTY19,BPRS25}. In the geometric setting, to the author's knowledge, only substitution \emph{tilings} \cite{Thu89} have been directly treated before (although some other repetitive examples can be covered using \cite{Sol07,PS01}). An vital result for the topological and non-commutative theory of substitution tilings \cite{AP98}, is Solomyak's recognisability theorem, which shows aperiodicity is equivalent to injectivity of the substitution map on the translational hull of a primitive stone inflation tiling \cite{Sol98} (stone inflations are tile substitutions that replace inflated tiles with patches of equal support, as for the chair substitution in Figure \ref{fig:several subs}). In this context, recognisability is also called `unique composition', since it is equivalent to there being a unique supertiling --- the tiling that decomposes to the original --- being unique and, moreover, defined by a local rule grouping tiles into supertiles. This has been extended to a reasonably wide class of non-primitive (non-repetitive) admissible stone inflations \cite{CS11}.

The current paper advances recognisability results in the geometric setting, by extending to general kinds of patterns (rather than just tilings), using a broader notion of a pattern (or space of patterns) being substitutional, fully dropping minimality and allowing for a mix of periodic and non-periodic elements, with a formula for the number of pre-images of any pattern under substitution in terms of groups of translational periods. Moreover, a major aim of the paper is to provide a robust foundational framework for the theory of such patterns and pattern spaces. Before stating our main recognisability results, we will briefly outline how this generalised setting is established.

Throughout, \(E \cong \R^d\) will denote the ambient space of our patterns. In Symbolic Dynamics, a \emph{bi-infinite word} over a (typically finite) alphabet \(A\) is a function \(w \colon \Z \to A\) i.e., \(w \in A^\Z\), we label (or `colour') each point of \(\Z\) with labels in \(A\). In this paper, we define a \emph{pattern} to simply be some \(\cP \in A^E\), that is, a labelling of our Euclidean space with labels in \(A\). This starting point conveniently accommodates a broad range of pattern types together under one hood, including tilings, (coloured) point sets (such as Delone sets but also uniformly discrete and non-relatively dense point sets), coverings of \(E\) by overlapping tiles or tilings not fully covering \(E\) and more, see Figures \ref{fig:pen_sub}--\ref{fig:digit sub}.

For hierarchical patterns, it is customary to begin with a substitution rule on a finite set of `atoms' e.g., a set of prototiles of a stone inflation rule. Given the broad range of scenarios alluded to above, this is no longer simplest approach, or even feasible; in any case, it is useful to allow further flexibility. Fortunately, an encompassing property of such setups can be identified, extendible to general pattern spaces: the possibility to always `de-substitute' patterns within the space. For instance, from a symbolic substitution \(\sub\) defining a subshift \(X_\sub\), the substitution map \(\sub \colon X_\sub \to X_\sub\) is `representable' in \(X_\sub\) \cite{BPR23}, meaning that, for all \(w \in X_\sub\), there exists \(w' \in X_\sub\) for which \(w\) is some shift of \(\sub(w')\); even in the \(S\)-adic case, substitutions between levels are always `eventually representable' \cite{BPRS25}. In the geometric setting, under usual conditions (but even without primitivity), substitution is simply surjective on the tiling space, which may be shown by a simple Cantor diagonalisation argument (see \cite{AP98}). Such substitutions are compositions `inflate, replace'. The `inflate' step is usually given by application of an expansive linear map \(L \colon E \to E\). The `replace' step is defined in a local manner, as illustrated on the right in Figure \ref{fig:pen_sub}.

\begin{figure}
\includegraphics[width=0.8\textwidth]{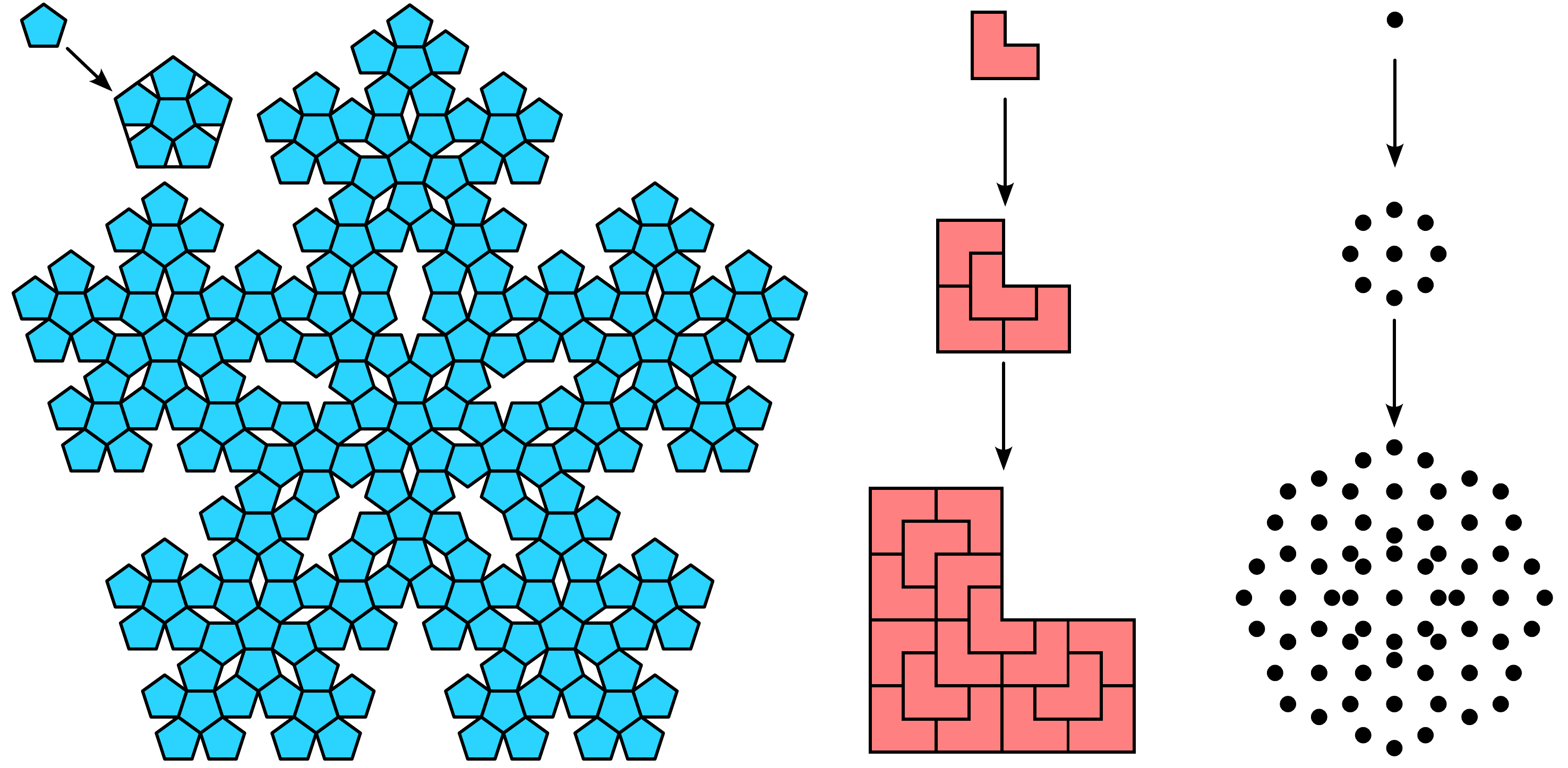}
\caption{Left: the inflation of a pentagon by the square of the golden mean may be partially tiled by six pentagons (the inflated pentagon boundary is shown for reference). The third iteration of the resulting `inflate, replace' rule is also shown. Middle: two iterations of the chair stone inflation rule. Right: two iterations of a point set substitution with inflation \(1+\sqrt{2}\). Each of these may be iterated to define \(L\)-sub pattern spaces (of tilings with gaps, tilings and Delone sets, respectively).}\label{fig:several subs}
\end{figure}

This guides us to the following definitions. A pattern space \(\Omega\) (see Definition \ref{def:pattern space}) will be called \(L\)-sub when it is equipped with a surjective local derivation (LD) map \(S \colon L\Omega \LDmap \Omega\). We describe the requirements on \(\Omega\) in more detail below, but the reader may have in mind a space of tilings \cite{Sad08} generated by a tile substitution rule of finite local complexity (FLC, see \cite{BG13}). The map \(S\) can be thought of as the `replace' operation (as in Figure \ref{fig:pen_sub}), although we also refer to it as `subdivision'. It maps from the pattern space of inflated patterns to the original. See Definition \ref{def:LD map} for the notion of a map of pattern spaces being LD. It is the geometric analogue of a sliding block code factor map from Symbolic Dynamics although note that it cannot be identified with simply being a factor map in this geometric setting (see \cite{CS06}). The requirement that \(S\) is surjective is quite natural: it demands that patterns of \(\Omega\) are all `hierarchical', in that they are all subdivisions of other inflated patterns of \(\Omega\). We define substitution \(\sub \colon \Omega \to \Omega\) as the composition `inflate, replace' i.e., \(\sub \coloneqq S \circ L\).

Throughout, we will use the `local topology' \cite{BG13,Sad08} on pattern spaces \(\Omega\), whereby two patterns are considered `close' if they agree to a large radius about the origin, up to a small translation. We will require that \(\Omega\) is Hausdorff, which is essentially automatic in cases of interest (and which we show may be expressed in simple terms of patterns, see Section \ref{sec:separation properties}). Moreover, we will require that \(\Omega\) is compact. Again, this may be identified with a property expressed in terms of patterns, finite local complexity (FLC), see Definition \ref{def:FLC pattern space}. In the case that \(\Omega\) is a space of (locally finite) tilings, this is the familiar property that there are only a finite number of tiles and ways they can meet, modulo translation, across all tilings in \(\Omega\). We need no further restrictions on \(\Omega\) (notably, minimality is not assumed) for our following main recognisability result.

\begin{customthm}{\ref{thm:recognisability}}
For any compact Hausdorff \(L\)-sub pattern space \(\Omega\), with \(L\) expansive (i.e., with all eigenvalues strictly greater than \(1\) in modulus), substitution \(\sub\) satisfies unique composition modulo translation. That is, for all \(\cP \in \Omega\) and \(\cU\), \(\cV \in \sub^{-1}(\cP)\), we have \(\cU = \cV + x\) for some \(x \in E\).
\end{customthm}

Several consequences quickly follow, such as relating uniqueness of pre-images to non-periodicity. In more detail, for a pattern \(\cP\), we let \(\cK_{\cP}\) denote its group of translational periods i.e., the set of \(x \in E\) for which \(\cP = \cP+x\), and call \(\cP\) non-periodic when \(\cK_{\cP} = \{\mathbf{0}\}\). In Proposition \ref{prop:number of pre-images}, the following is shown to follow from Theorem \ref{thm:recognisability}: for all \(\cP \in \Omega\) and \(\cP' \in \sub^{-1}(\cP)\), we have
\[
\# \sub^{-1}(\cP) = [\cK_{\cP} : L \cK_{\cP'}] < \infty .
\]
In particular, non-periodic patterns have unique pre-images under substitution.

Under the hypotheses of Theorem \ref{thm:recognisability}, a useful property is established in Theorem \ref{thm:finite languages of hierarchical}, that \(\Omega\) contains only finitely many different LI-classes \cite{BG13} (in Symbolic Dynamics terms: only finitely many `languages'). Thus, some power \(\sub^N\) of substitution fixes LI-classes. With the above, this yields Corollary \ref{cor:uniqueness of pre-images}, that if \(\cK_{\cP}\) is connected (in particular, if \(\cP\) is non-periodic) then \(\cP\) has a unique pre-image and, conversely, \(\cP\) has multiple pre-images under \(\sub^N\) when \(\cK_{\cP}\) is disconnected. In the latter case, we say that \(\cP\) is discretely periodic (and discretely non-periodic otherwise). Note that this power cannot be dropped in general: discretely periodic points may have unique pre-images under smaller powers of substitution, even for the simple case of hulls of 1d patterns coming from (non-primitive) constant length substitutions, see Example \ref{exp:subs not LI}.

By the above, we obtain a dichotomy result (see Corollary \ref{cor:injectivity dichotomy}), that substitution \(\sub \colon \Omega \to \Omega\) is injective if and only if each \(\cP \in \Omega\) is discretely non-periodic. In many standard classes of interest (such as hulls of FLC tilings or Delone sets), a return discreteness property holds (see Definition \ref{def:return discrete}), which ensures that all groups of periods are discrete. In this case, discrete periodicity is equivalent to periodicity, so that \(\sub \colon \Omega \to \Omega\) is injective if and only if \(\Omega\) has no periodic elements.

In \cite[Theorem 4.4]{CS11}, Cortez and Solomyak show a recognisability result for tiling spaces generated by FLC, admissible stone inflations satisfying the non-periodic border condition. They note the following open question: more generally, is it still true that non-periodic elements of a tiling space generated by an FLC stone inflation rule necessarily have unique pre-images under substitution? Since such systems are covered by the above results, this indeed holds, even in the wider framework of (compact Hausdorff) expansive \(L\)-sub pattern spaces with the local topology, and a form of converse as explained above also holds.

The above discussion on discrete periodicity shows one aspect of how our results extend some previous ones for tilings to general patterns: we generally do not even assume return discreteness, so the groups of periods \(\cK_{\cP}\) need not be discrete (although, for a compact Hausdorff pattern space, we show that return discreteness holds precisely when all groups of periods are discrete, see Proposition \ref{prop:return discrete from discrete periods}). Consider, for instance, the `empty' pattern \(\cP\), which looks the same at all points of \(E\) (see Example \ref{exp:trivial patterns}). This is as periodic as could be: \(\cK_{\cP} = E\), thus \([E : LE] = 1\), so when \(\cP \in \Omega\) it has unique pre-image (under all powers of \(\sub\)). This example is not so artificial, it is in any pattern space containing a uniformly discrete but non-relatively dense point set.

Beyond this, and dropping minimality, another sense in which our results generalise some previous ones is that we need not restrict to pattern spaces defined by languages `generated' by a substitution on a prototile set or similar, we essentially just need surjectivity of substitution (with linear expansion). In fact, even if we drop surjectivity of substitution \(\sub \colon \Omega \to \Omega\), the results above remain true for the subset \(\sH \subseteq \Omega\) of hierarchical elements, those that have an infinite chain of pre-images. Substitution restricts to a surjective map on \(\sH\), making it an \(L\)-sub pattern space to which our main results apply (see Proposition \ref{prop:restriction to hierarchical elements}; note that \(\sH\) is simply the eventual range of \(\sub\)). Even when starting with a standard tile substitution rule, with an appropriate `full' space of patterns \(\Omega\) (one on which we may define the substitution map \(\sub \colon \Omega \to \Omega\)), the space \(\sH\) typically contains extra patterns to the usual space \(\Omega_\sub\) of tilings admitted by substitution \cite{AP98}, see Section \ref{L-sub pattern spaces from iterated inflate, replace rules} for more details. The particular case of digit (or constant shape) substitutions on \(\Z^d\) is considered in Section \ref{sec:digit subs}. These define a discrete substitution \(\sub \colon A^{\Z^d} \to A^{\Z^d}\) for a finite alphabet \(A\). Substitution extends to a map on the suspension (a tiling space, which is an \(\R^d\)-dynamical system) and our recognisability results above apply to elements in the eventual range of this substitution.

\begin{figure}
\includegraphics[width=0.8\textwidth]{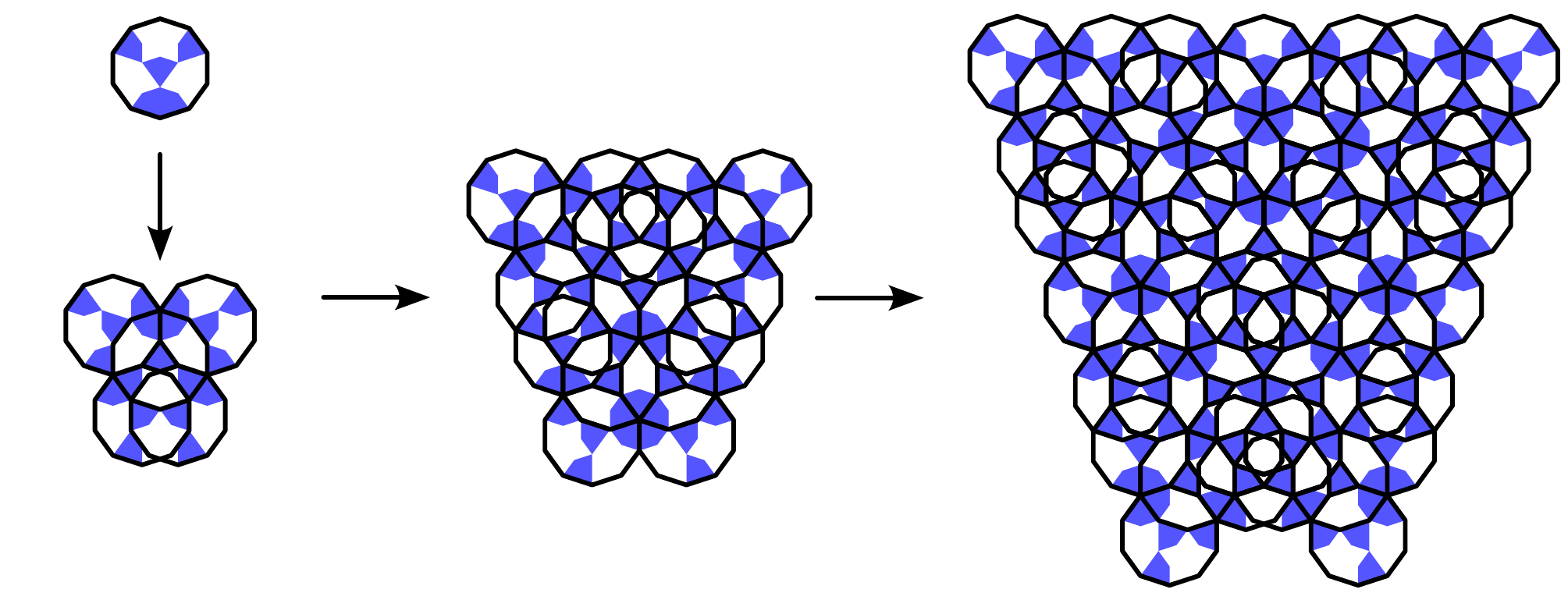}
\caption{Three iterations of a substitution \cite{Jeo03} of the Gummelt decagon \cite{Gum96}, which generates coverings MLD to the Penrose tilings.}\label{fig:gummelt}
\end{figure}

Our definition of a pattern space \(\Omega\) being \(L\)-sub was preceded by consideration of similar notions for individual patterns. A pattern \(\cP\) is called \(L\)-sub if there exists a pattern \(\cP' \LI \cP\) for which \(L\cP' \LD \cP\). Here, \(\cP' \LI \cP\) is read \emph{\(\cP'\) is locally indistinguishable from \(\cP\)} and is equivalent to it being in the orbit closure of \(\cP'\) (or, in symbolic terms, its language being a subset of that of \(\cP\)). The notation \(L\cP' \LD \cP\) is read \emph{\(\cP\) is locally derivable \cite{BSJ91} from \(L\cP'\)} and, again, can be thought of (in symbolic terms) as there being a sliding block code redecorating \(L\cP'\) to \(\cP\), as in Figure \ref{fig:pen_sub}. This useful particular notion of an individual pattern being \(L\)-sub was identified through the collaboration \cite{HKW25}, which considers the question of when a Euclidean cut and project scheme produces substitutional patterns. It is also a slight relaxation of the notion of a pattern being pseudo self-affine i.e., \(L\cP \LD \cP\), see for instance \cite{PS01, Sol07}, which is also known as having a local inflation deflation symmetry (LIDS, see \cite{BSJ91,BG13}). The difference is that this captures the notion of a pattern substituting to \emph{itself} under some substitution rule, whereas \(L\)-sub captures the more relaxed condition of being `generated' by some kind of substitution. Under some basic assumptions (see Theorem \ref{thm:predecessor LI}), any pattern \(\cP\) that is \(L\)-sub has hull (or `orbit closure' \(\Omega_{\cP} = \overline{\cP+E}\)) that is itself naturally \(L\)-sub, so we obtain an associated substitution map \(\sub \colon \Omega_{\cP} \to \Omega_{\cP}\).

For referencing purposes and to unpack some definitions, we give a recognisability dichotomy result for \emph{individual} \(L\)-sub patterns below (rather than pattern spaces) that are return discrete (see Definition \ref{def:return discrete}), which includes standard examples of tilings and Delone sets.

\label{sec:intro_cor}
\begin{customcor}{\ref{cor:unique decomposition for individual patterns}}
Let \(\cP\) be an FLC and return discrete pattern of the Euclidean space \(E\) and \(L \colon E \to E\) a linear expansion. Suppose that \(\cP\) is \(L\)-sub, that is, there is a pattern \(\cP'\) for which \(\cP' \LI \cP\) and \(L\cP' \LD \cP\). Then \(\cP' \LIs \cP\) (that is, we also have \(\cP \LI \cP'\)). We have that \(\cP \LD L\cP'\), equivalently \(\sub \colon \Omega_{\cP} \to \Omega_{\cP}\) is injective, if and only if \(\cP\) is aperiodic, that is, \(\Omega_{\cP}\) contains no periodic elements.
\end{customcor}

The definition above of being \(L\)-sub is reasonably versatile. And being defined in terms of the fundamental relations of local isomorphism/derivability makes it a practical one in other applications \cite{HKW25}, whilst still providing a similar definition (especially in the minimal case) to the familiar one, of being generated by an `inflate, replace' procedure.

Note that use of the local topology on pattern spaces \(\Omega\) (along with compactness, or FLC) is an important restriction of our results: they do not cover pinwheel-type examples \cite{Rad94}, where recognisability is known in the minimal case \cite{HRS05}. Such tilings have infinitely many tile types modulo translation but only finitely many modulo rigid motion. In the symbolic setting, one may also consider appropriately topologised infinite alphabet substitutions \cite{DOP18,MRW25} and it would be interesting to investigate if the techniques here may be extended to any classes of such non-FLC patterns. For \(S\)-adic shifts, a general eventual recognisability result is now known \cite{BPRS25}, see also \cite{BSTY19}. It seems likely that geometric analogues can be formulated. Through the techniques developed in this paper, we are also able to weaken the expansive linearity assumption for substitution to a non-linear expansivity type of condition, something to which we will return in future work.

\subsection{Organisation of paper}

Generalised Euclidean patterns are introduced in Section \ref{sec:Generalised patterns}. Key concepts are patches and languages of patterns. Basic results and notation for patches are given and we introduce local indistinguishability and local derivability of general patterns, fundamental relations that are used throughout the paper (including in the definition of a pattern being \(L\)-sub). A simple but key observation is made in Lemma \ref{lem:close repeat => period}, that a return vector in \(U \subset E\) to a patch containing centres of all \(U\)-patches must be a period of the pattern (informally: relatively short return vectors of `full' patches are periods), which is a suitable replacement (but one applicable to non-repetitive patterns) of a similar property used in Solomyak's proof \cite{Sol98} of unique composition.

In Section \ref{sec:pattern spaces} we introduce pattern spaces, (generalisations of tiling spaces \cite{Sad08}). Fundamental properties of them are established, in many cases extensions of some familiar ones from tiling spaces. This includes the identification of compactness of a pattern space \(\Omega\) with it being FLC, and minimality with repetitivity (or of containing a single LI-class, see Proposition \ref{prop:repetitive <=> minimal}). We explain how several topological properties of a pattern space \(\Omega\) can be understood directly in terms of patterns, including separation properties; for instance, it is shown that a pattern space is compact Hausdorff if and only if it is FLC and `well-separated', see Definitions \ref{def:FLC pattern space} and \ref{def:well-separated}. The notion of an LD map of pattern spaces is given in Definition \ref{def:LD map}. As usual for tiling spaces, bijective LD maps between (compact Hausdorff) pattern spaces have inverses that are also LD (Proposition \ref{prop:inverse of LD is LD}). We also define the `hull' (the orbit closure) \(\Omega_{\cP}\) of an individual pattern \(\cP\), and relate many of these properties to corresponding properties of \(\cP\) (see, for instance, Propositions \ref{prop:FLC and repetitive inherited} and \ref{prop:induced LDs}).

Readers primarily interested in recognisability results are recommended to skip most of the details of Section \ref{sec:pattern spaces} on a first reading, as many of the definitions and results here will be familiar, at least to those accustomed to tiling spaces (or perhaps merely suspensions of subshifts). In Section \ref{sec:Substitutional pattern spaces}, we introduce the notion of a pattern space being substitutional with respect to a linear automorphism \(L\) of \(E\); in most results, \(L\) also needs to be expansive. Some key results and notation needed for recognisability are given in Section \ref{Languages for expansive L-sub pattern spaces}, such as how substitution defines a map between (levels of) the language of an \(L\)-sub pattern space \(\Omega\) and that \(\Omega\) contains only finitely many LI-classes (in the compact expansive case, see Theorem \ref{thm:finite languages of hierarchical}). By restriction to hulls, similar notions for individual \(L\)-sub patterns are considered in Section \ref{sec:Substitutional hulls and L-sub for individual patterns}. An important result for recognisability is Proposition \ref{prop:full patch at origin} which, loosely, bounds how far one needs to look to find all patches of a given size in any pattern of an \(L\)-sub pattern space. This pairs with Lemma \ref{lem:close repeat => period} (mentioned above) in forcing a certain translation to be a period in the proof of recognisability.

Our main recognisability results are stated in Section \ref{sec:recognisability}. Some main consequences (such as a formula for the size of fibres, and injectivity versus non-periodicity dichotomy results) are shown first, from the lynchpin Theorem \ref{thm:recognisability}, whose proof is given afterwards.

Finally, in Section \ref{sec:examples}, we demonstrate our recognisability results on some examples. Most are chosen to clarify how the results go beyond the minimal case, or to examples lacking return discreteness; the latter allows for `unusual hulls', namely, ones that are not torus fibre bundles with totally disconnected fibres \cite{SW03} (hulls could now be spheres or intervals, say, see Example \ref{exp:unusual hulls}). On dropping minimality, Example \ref{exp:subs not LI} shows that even for hulls of 1d tilings defined by constant length non-primitive substitution, LI-classes need not be fixed by substitution and periodic points can (at least under the first power) have unique pre-images. We conclude by explaining how our results cover all hierarchical tilings defined by digit or `constant shape' substitutions over \(\Z^d\).

\subsection*{Acknowledgements}
The author thanks Michael Baake, Edmund Harriss, Henna Koivusalo, Neil Ma\~{n}ibo, Dan Rust and Reem Yassawi for helpful discussions in the writing of this paper. We also thank Marie-Pierre B\'{e}al, Val\'{e}rie Berth\'{e}, Dominique Perrin and Wolfgang Steiner for their kind input and enlightening explanations regarding recognisability in the symbolic setting during a research trip to IRIF in Paris.

\section{Generalised patterns}
\label{sec:Generalised patterns}

Throughout we let \(E \cong \R^d\) (\(d \in \N \coloneqq \{1,2,\ldots\}\)) denote a finite-dimensional vector space over \(\R\), always the ambient space of our patterns. The additive identity of \(E\) is denoted by \(\mathbf{0}\). We fix a norm on \(E\); the exact choice is not important, although we have some requirements of it once the inflation \(L \colon E \to E\) of an \(L\)-sub pattern has been introduced, as explained below Definition \ref{def:expansive}. For \(x \in E\) and \(r \geq 0\), we let \(B(x,r)\) denoted the closed ball of radius \(r\) in \(E\), and \(B_r \coloneqq B(\mathbf{0},r)\).

\begin{notation}\label{not:subsets}
We write \(K \cpt E\) to mean that \(K\) is a compact subset of \(E\), we write \(B \nhd E\) to mean that \(B\) is a neighbourhood of \(\mathbf{0} \in E\) and \(B \cptn E\) if \(B\) is a compact neighbourhood of \(\mathbf{0} \in E\).
\end{notation}

\subsection{Patterns and patches}
Throughout, we will use the following notion of a `pattern':

\begin{definition}\label{def:pattern}
For a set \(A\), a \textbf{pattern} \(\cP\) is a map \(\cP \colon E \to A\) i.e., \(\cP \in A^E\). We let \(\cP[x] \coloneqq \cP(x)\).
\end{definition}

\begin{remark}
An alternative generalised notion of a pattern is given by a `discrete structure' \cite[Definition 5.1]{BG13}, although our definition is even more basic. See also \cite{Nag19}.
\end{remark}

As motivated in the introduction, this definition is analogous to that of a bi-infinite word, which instead is a labelling of \(\Z\) over some `alphabet' \(A\), i.e., an element \(w \in A^\Z\). An alternative motivation (that explains our choice of square brackets) is the observation that, in the translational dynamics of patterns, we require at least some way of comparing when patches of certain sizes agree, modulo translation, at different points in the pattern. In particular, we should at least be able to define a notion of agreement locally, over individual points, when `the pattern agrees at \(x\) and \(y\), modulo translation', giving a relation \(\cP[x] = \cP[y]\). Such a relation alone is enough to derive the relations of agreement for all larger patches:

\begin{definition}\label{def:patches}
Given \(\cP \in A^E\), \(U \subseteq E\) and \(x \in E\), we let \(\cP[x,U] \in A^U\) be the function \(u \mapsto \cP[x+u]\). Generally, when \(U \cpt E\), we call an element \(p \in A^U\) a \textbf{patch}. If \(p = \cP[x,U]\) for some \(x \in E\) and \(U \cpt E\), then we say that \(p\) \textbf{is a \(U\)-patch in \(\cP\)} and write \(p \subpatch \cP\). If \(U = B_r\), we call \(\cP[x,B_r]\) an \textbf{\(r\)-patch} and write \(\cP[x,r] \coloneqq \cP[x,B_r]\).

We denote the set of all \(U\)-patches in \(\cP\) by
\[
\lang_U(\cP) \coloneqq \{p \in A^U \mid p \subpatch \cP\} = \{\cP[x,U] \mid x \in E \} .
\]
The \textbf{language} of \(\cP\) is the set of all patches in \(\cP\) (of all shapes), denoted
\[
\lang(\cP) \coloneqq \bigcup_U \lang_U(\cP) ,
\]
where the union is over all \(U \cpt E\). Note that, by definition, a patch \(p \in A^U\) also stores the information of its `shape' \(U\).
\end{definition}

\begin{definition}\label{def:LI}
For patterns \(\cP\), \(\cQ \in A^E\), we say that \textbf{\(\cP\) is locally indistinguishable from \(\cQ\)}, and write \(\cP \LI \cQ\), if whenever \(p \subpatch \cP\) then \(p \subpatch \cQ\), that is, \(\lang(\cP) \subseteq \lang(\cQ)\). If \(\cP \LI \cQ\), and vice versa i.e., \(\lang(\cP) = \lang(\cQ)\), then we call \(\cP\) and \(\cQ\) \textbf{locally isomorphic}. The \textbf{LI-class} of \(\cP\) is the set of all patterns locally isomorphic to \(\cP\), denoted
\[
\mathrm{LI}(\cP) \coloneqq \{\cQ \in A^E \mid \cP \LIs \cQ\} = \{\cQ \in A^E \mid \lang(\cP) = \lang(\cQ) \}.
\]
\end{definition}

\begin{remark}
In the literature, `locally indistinguishable' and `locally isomorphic' are usually used interchangeably, both meaning \(\lang(\cP) = \lang(\cQ)\) in the above notation. However, the one-sided relation in Definition \ref{def:LI} is useful, as we will see in several definitions e.g., the hull of a pattern.
\end{remark}

Since \(\cP \LI \cQ\) if and only if \(\lang(\cP) \subseteq \lang(\cQ)\), the following is trivial:

\begin{lemma}\label{lem:LI is pre-order}
Local indistinguishability is a preorder. That is, for all patterns we have \(\cP \LI \cP\), and if \(\cP_1 \LI \cP_2 \LI \cP_3\) then \(\cP_1 \LI \cP_3\).
\end{lemma}

As usual, we may `transform' patterns given a map of their underlying ambient space:

\begin{definition}\label{def:transformed pattern}
Let \(\cP \in A^E\) and \(T \colon E \to E\) be a bijection (typically, here, a linear map). We define the transformed pattern \(T\cP\) by
\[
(T\cP)[Tx] = \cP[x] \text{ i.e., } (T\cP)[x] \coloneqq \cP[T^{-1}x] .
\]
In the particular case of a translation \(T = (x \mapsto x+z)\), we denote \(\cP + z \coloneqq T\cP\), that is, \((\cP+z)[x] \coloneqq \cP[x-z]\).
\end{definition}

The following basic properties, of \emph{restriction} (1), \emph{transformation} (2), \emph{shifting} (3) and \emph{glueing} (4) of patches, are obvious but often used, especially (3), which we will refer to as \textbf{shifting by \(z\)}. Recall the definition of a transformed pattern from Definition \ref{def:transformed pattern}.

\begin{lemma}\label{lem:pattern properties}
For patterns \(\cP\), \(\cQ \in A^E\),
\begin{enumerate}
	\item if \(\cP[x,V] = \cQ[y,V]\) and \(U \subseteq V\) then \(\cP[x,U] = \cQ[y,U]\);
	\item if \(T(x) = L(x) + z\) is an affine automorphism then \(\cP[x,V] = \cQ[y,V]\) if and only if \((T\cP)[Tx,LV] = (T\cQ)[Ty,LV]\). For \(z \in E\), we have that
	\[
	\cP[x,V] = \cQ[y,V] \text{ if and only if } \cP[x+z,V-z] = \cQ[y+z,V-z];
	\]
	\item if \(\cP[x,V] = \cQ[y,V]\) and \(U+z \subseteq V\) then \(\cP[x+z,U] = \cQ[y+z,U]\);
	\item if \(V = \bigcup_{i \in \mathcal{I}} U_i\) then \(\cP[x,V] = \cQ[y,V]\) if and only if \(\cP[x,U_i] = \cQ[y,U_i]\) for all \(i \in \mathcal{I}\).
\end{enumerate}
\end{lemma}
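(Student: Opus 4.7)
The plan is to unpack every claim directly from the definition \(\cP[x,U] \colon U \to S\), \(u \mapsto \cP[x+u]\). Nothing genuinely nontrivial is required; each part is bookkeeping, and the only step calling for any care is the reindexing in part~(2).

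For (1), two functions with domain \(V\) that agree everywhere on \(V\) necessarily agree when restricted to \(U \subseteq V\).

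For (2), I would write \(A(w) = L(w) + z\) and observe that for every \(v \in V\) we have \(Ax + Lv = L(x+v) + z = A(x+v)\), so by the defining identity \((A\cP)[A(\cdot)] = \cP[\cdot]\),
\[
(A\cP)[Ax, LV](Lv) = (A\cP)[A(x+v)] = \cP[x+v] = \cP[x,V](v),
\]
and likewise with \(\cQ\) in place of \(\cP\). Since \(L\) is a bijection \(V \to LV\), the functions \((A\cP)[Ax, LV]\) and \(\cP[x,V]\) encode the same data up to reparametrization by \(L\), giving the biconditional. The translational special case is the analogous direct substitution: for \(v' \in V-z\), setting \(v = v'+z \in V\) gives \(\cP[x+z, V-z](v') = \cP[x+z+v'] = \cP[x,V](v)\), again exhibiting a bijective reindexing.

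For (3), I would combine the translational special case of (2) with (1): the hypothesis \(\cP[x,V] = \cQ[y,V]\) is equivalent to \(\cP[x+z, V-z] = \cQ[y+z, V-z]\), and since \(U+z \subseteq V\) means \(U \subseteq V-z\), restriction yields \(\cP[x+z, U] = \cQ[y+z, U]\). Finally, for (4), two functions on \(V = \bigcup_{i \in \mathcal{I}} U_i\) agree on \(V\) iff they agree pointwise on \(V\) iff they agree on each \(U_i\); this is just the fact that equality of set-valued functions is determined pointwise, together with the covering property. The main (and only) obstacle is cosmetic: arranging the display for (2) so that the identity \(A(x+v) = Ax + Lv\) is visibly used rather than obscured.
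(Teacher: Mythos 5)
Your proposal is correct and follows essentially the same route as the paper: each part is unwound directly from the definition \(\cP[x,U](u) = \cP[x+u]\), with (2) handled by the reindexing identity \(A(x+v) = Ax + Lv\) and invertibility of \(L\), and (3) obtained by composing the translational case of (2) with the restriction in (1). No gaps; the only cosmetic remark is that in (4) the patches are \(S\)-valued functions rather than set-valued, but the pointwise argument is unchanged.
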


\begin{proof}
By definition, \(\cP[x,U]\) is just the restriction of the function \(\cP[x,V]\) to \(U \subseteq V\), so (1) is obvious. For (2), for this proof only, we will denote \(\cP[x,V] \cdot v \coloneqq (\cP[x,V])(v) = \cP[x+v]\). Note that \(T^{-1}(x) = L^{-1}(x-z)\) and \(T(x+v) = L(x+v)+z = T(x) + L(v)\) so, by definition of \(T\cP\),
\[
(T\cP)[Tx,LV] =
\left(
LV \ni Lv \mapsto (T\cP)[Tx+Lv] = (T\cP)[T(x+v)] = \cP[x+v] = \cP[x,V] \cdot v
\right)
\]
Analogously, \((T\cQ)[Ty,LV] = (Lv \mapsto \cQ[y,V] \cdot v)\) so, since \(L\) is invertible, the first claim of (2) follows. For the second, given \(z \in E\), we have
\[
\cP[x+z,V-z] =
\left(
V-z \ni v-z \mapsto \cP[(x+z) + (v-z)] = \cP[x+v] = \cP[x,V] \cdot v
\right)
\]
and analogously for \(\cQ\), so the result follows. For (3), we may just combine (1) and (2). Indeed, by (2), we have \(\cP[x+z,V-z] = \cQ[y+z,V-z]\), which we may then restrict to \(U\) using (1) and \(U \subseteq V-z\). Finally, by definition, \(\cP[x,V] = \cQ[y,V]\) if and only if \(\cP[x+v] = \cQ[y+v]\) for all \(v \in V\), from which (4) quickly follows.
\end{proof}

Transforming a pattern by an affine map has the obvious affect on its language of patches:

\begin{lemma}\label{lem:distortions of languages}
Let \(T \colon E \to E\) be affine, with \(T(x) = L(x) + z\) for \(L \colon E \to E\) a linear automorphism. For \(p \in A^U\), define \(Lp \in A^{LU}\) by \((Lp)(Lu) \coloneqq p(u)\). Then \(\lang_{LU}(T \cP) = L(\lang_U(\cP))\). In particular, \(\lang(T\cP) = L(\lang(\cP))\).
\end{lemma}

\begin{proof}
If \(p \in \lang_U(\cP)\), say \(p = \cP[x,U]\), then by definition \(Lp = (L\cP)[Lx,LU] \in \lang_{LU}(L\cP)\). And any \(p' = (L\cP)[y,LU]\) can be expressed as \(p' = Lp\), taking \(x = L^{-1}(y)\). So \(p \mapsto Lp\) defines surjection \(\lang_U(\cP) \to \lang_{LU}(L\cP)\) (a bijection of course, with inverse given by \(p \mapsto L^{-1}p\)). Since \(L\cP \LIs T\cP\) (translating a pattern does not affect its language), we have \( \lang_{LU}(T\cP) = \lang_{LU}(L\cP) = L(\lang_U(\cP))\), as required. The last claim follows by taking the union over all \(U \cpt E\), which is equal to taking the union over all \(LU \cpt E\).
\end{proof}

\subsection{Examples of patterns}

\subsubsection{Tilings}\label{sec:patterns from tilings}
A \textbf{tile} is a pair \(t = (K,\ell)\) where \(\emptyset \neq K \cpt E\) and \(\ell \in \cL\) for a fixed set \(\cL\) of `labels' (or `colours'). Given a tile \(t = (K,\ell)\), its support is \(\mathrm{supp}(t) \coloneqq K\) and its translation by \(x \in E\) is \(t + x \coloneqq (K+x,\ell)\). A \textbf{tiling} \(T\) is a set of tiles covering \(E\), that is, \(E = \bigcup_{t \in T} \mathrm{supp}(t)\), where distinct tiles intersect on at most their boundaries. Usually, one also asks that the tiling is \emph{locally finite} i.e., each compact subset intersects at most finitely many tiles. However, this is not important here; non-locally finite tilings can even be FLC (in the sense of Definition \ref{def:FLC}).

There are a couple of way one could define an abstract pattern associated to a tiling. Firstly, define \(A \coloneqq \mathscr{L} \cup \{\partial\}\), where \(\partial\) is a new label not in \(A\). We then define \(\cP \in A^E\) by \(\cP[x] \coloneqq \ell\), if \(x\) belongs to the interior of a tile with label \(\ell\), and \(\cP[x] = \partial\) if \(x\) belongs to the boundary of a tile. If all tiles are connected and different translation classes of tiles have distinct labels, it is easy to see how the original tiling is essentially determined (and in a local way) from the abstract pattern. Otherwise, trivial modifications can be introduced to ensure this.

An alternative approach goes as follows. Given a tiling \(T\) and \(U \subseteq E\), let \(T \sqcap U\) denote the set of tiles whose supports have non-trivial intersection with \(U\). Given \(x \in E\), we denote
\begin{equation} \label{eq:pattern from tiling}
\cT[x] \coloneqq (T \sqcap \{x\}) - x = (T - x) \sqcap \{\mathbf{0}\},
\end{equation}
that is, we take all tiles touching the point \(x\) and recentre them by moving \(x\) to the origin. Taking \(B\) to be all such patches (`centred' at the origin), this defines a pattern \(\cT \in B^E\). Thus, \(\cT[x] = \cT[y]\) if and only if \(x\) and \(y\) are incident with tiles that are exactly equal, modulo translation from \(x\) to \(y\). Similarly, \(\cT[x,r] = \cT[y,r]\) if and only if the tiles within radius \(r\) of \(x\) are exactly equal to those within radius \(r\) of \(y\), modulo translation from \(x\) to \(y\). Again, note how the square brackets indicate how a (rather typical) translational equivalence relation is implicitly being considered in this notation.

Note that for the first construction, \(\#A < \infty\), provided \(\# \cL < \infty\). In contrast, we always have \(\#B = \infty\). Indeed, for \(\cT[x] = \cT[y]\) it is not enough that the patches of tiles touching \(x\) and \(y\) agree modulo translation, they need to agree modulo translation taking \(x\) to \(y\) (that is, \(x\) and \(y\) need to mark equivalent points in the patches). So the reader might wonder in which sense replacing the original tiling with an abstract pattern is faithful. In fact, given the mild conditions (or minor alterations otherwise) in the first approach, both lead to essentially equivalent patterns, in the sense that \(\cP \MLD \cT\), see Definition \ref{def:LD}. Crucially, whichever approach we take, associated pattern spaces (see Section \ref{sec:pattern spaces}) will be topologically conjugate to each other, and to tiling spaces defined, for instance, as in \cite{Sad08}.

With similar definitions, one can also easily define patterns associated to (locally finite) coverings of \(E\) by tiles (i.e., one may allow for overlapping tiles, as in Figure \ref{fig:gummelt}) or, by introducing an `empty' label, also allow tiles to not cover all of \(E\) (see Figure \ref{fig:several subs}).

\subsubsection{Coloured point sets}\label{sec:patterns from point sets}
Let \(\Lambda \subset E\) be \textbf{uniformly discrete} i.e., there exists some \(r > 0\) for which every \(r\)-ball in \(E\) intersects at most one point of \(E\). We call \(\Lambda\) a \textbf{point set}, and say \(\Lambda\) is a \textbf{Delone set} if, additionally, it is \textbf{relatively dense}, that is, there exists some \(R > 0\) so that every \(R\)-ball in \(E\) intersects at least one point of \(E\).

Given a set \(\cL\), we can also considered coloured (or `labelled') point sets and Delone sets, by giving a map \(\ell \colon \Lambda \to \cL\). Let \(A \coloneqq \cL \cup \{\emptyset\}\), where \(\emptyset\) is a new label. A coloured point set naturally defines a pattern \(\cP \in A^E\), by \(\cP[x] = \ell(x)\) if \(x \in \Lambda\), and \(\cP[x] = \emptyset\) otherwise. If \(\Lambda\) is uncoloured we can define \(\cP\) analogously (just choosing an arbitrary common label of all points in \(\Lambda\)).

Given \(x \in E\) and \(U \subseteq E\), note that \(\cP[x,U] \in A^U\) is essentially given by restricting the (coloured) point set to the subset \(U+x\), and translating by \(-x\) back over the origin. That is, it may be identified with the `cluster' \((\Lambda-x) \cap U\) (and similarly in the coloured case).

\subsection{Local derivation of patterns}

As well as defining an important relation between patterns, the notion of a local derivation \cite{BG13,BSJ91} will be key in a generalised notion of being substitutional.

\begin{definition}\label{def:LD}
Given a pattern \(\cP \in A^E\), a function \(f \colon E \to X\) (for any set \(X\)) is \textbf{pattern equivariant} or \textbf{locally defined} (\textbf{by \(\cP\)}) if there exists some \(c \geq 0\) for which, whenever \(\cP[x,c] = \cP[y,c]\) for \(x\), \(y \in E\), then \(f(x) = f(y)\). We say that \(\cQ \in B^E\) is \textbf{locally derivable} (\textbf{LD}) \textbf{from} \(\cP\), and write \(\cP \LD \cQ\), if the function \(f(x) \coloneqq \cQ[x]\) is locally defined by \(\cP\). That is, \(\cP \LD \cQ\) if there exists some \(c \geq 0\), called a \textbf{derivation radius}, for which
\[
\cP[x,c] = \cP[y,c] \text{ implies that } \cQ[x] = \cQ[y] .
\]
If \(\cP \LD \cQ\) and \(\cQ \LD \cP\) then \(\cP\) and \(\cQ\) are \textbf{mutually locally derivable} (\textbf{MLD}), written \(\cP \MLD \cQ\).
\end{definition}

\begin{remark}
Note that the `\(0\)' of \(\cQ[x,0] \equiv \cQ[x]\) (and \(\cQ[y] \equiv \cQ[y,0]\)) implicit in the definition is somewhat arbitrary, in the sense we could replace the requirement \(\cQ[x] = \cQ[y]\) with \(\cQ[x,r] = \cQ[y,r]\) for any fixed \(r \geq 0\), by increasing \(c\) by \(r\) using the following trivial lemma.
\end{remark}

\begin{lemma}\label{lem:increasing LD radius}
Let \(\cP \LD \cQ\) have derivation radius \(c\). Then, for all \(U \cpt E\), if \(\cP[x,B_c+U] = \cP[y,B_c+U]\) then \(\cQ[x,U] = \cQ[y,U]\). In particular, for all \(r \geq 0\), if \(\cP[x,c+r] = \cP[y,c+r]\) then \(\cQ[x,r] = \cQ[y,r]\).
\end{lemma}

\begin{proof}
If \(\cP[x,B_c+U] = \cP[y,B_c+U]\) then, for all \(u \in U\), we have \(\cP[x+u,B_c] = \cP[y+u,B_c]\) (by Lemma \ref{lem:pattern properties}, since \(B_c + u \subseteq B_c + U\)). Hence, by the LD property, \(\cQ[x+u] = \cQ[y+u]\) for all \(u \in U\) so that, by definition, \(\cQ[x,U] = \cQ[y,U]\), as required. The second claim follows, taking \(U = B_r\).
\end{proof}

\begin{lemma}\label{lem:LD preorder}
LD for patterns is a preorder. That is, for any pattern, \(\cP \LD \cP\) and for any patterns \(\cP_1\), \(\cP_2\) and \(\cP_3\), if \(\cP_1 \LD \cP_2 \LD \cP_3\) then \(\cP_1 \LD \cP_3\).
\end{lemma}

\begin{proof}
Clearly \(\cP \LD \cP\), with derivation radius \(c = 0\). If \(\cP_1 \LD \cP_2\) has derivation radius \(c_1\) and \(\cP_2 \LD \cP_3\) has derivation radius \(c_2\), then \(\cP_1 \LD \cP_3\) has derivation radius \(c_1 + c_2\), since if \(\cP_1[x,c_1+c_2] = \cP_1[y,c_1+c_2]\), then \(\cP_2[x,c_2] = \cP_2[y,c_2]\) (by Lemma \ref{lem:increasing LD radius}), and thus \(\cP_3[x] = \cP_3[y]\), as required.
\end{proof}

As already noted, a local derivation \(\cP \LD \cQ\) corresponds to the notion of a sliding block code between two bi-infinite words, where the derivation radius plays the role of the `window size' of the coding. To make this even more explicit, observe that \(\cP \LD \cQ\) is equivalent to there being some `window' \(U \cpt E\) and some `coding' function \(f \colon \lang_U(\cP) \to B\) for which \(\cQ[x] = f(\cP[x,U])\).

As will be familiar to readers in Aperiodic Order, the MLD relation is useful in passing between dynamically equivalent representations of patterns; for instance, it lets one work interchangeably between tilings and Delone sets (briefly, by puncturing and a Voronoi cell construction, see \cite{BG13}). Generally, LD is a non-symmetric relation:

\begin{example}
Let \(\cP\) be a pattern defined by a periodic tiling of translates of a \(2 \times 2\) squares and \(\cQ\) similarly but for the tiling given by cutting each square into four \(1 \times 1\) squares (associated patterns may be defined in a variety of MLD ways, as noted in Section \ref{sec:patterns from tilings}). Then \(\cP \LD \cQ\) but not vice versa: cutting the \(2 \times 2\) squares is a locally defined operation, whereas combining \(1 \times 1\) squares is not, as it requires information on parity of tile locations, which is not part of the information of a patch. MLD equivalent patterns necessarily have the same groups of periods (Lemma \ref{lem:periods under LI and LD}), which \(\cP\) and \(\cQ\) do not.
\end{example}

\subsection{Translational periods}

We may define the group of translational periods of a pattern just as we do for tilings and point sets:

\begin{definition}\label{def:periods}
For \(\cP \in A^E\) we let \(\cK = \cK_{\cP}\) denote the Abelian group of (translational) \textbf{periods} of \(\cP\), that is \(\cK \coloneqq \{x \in E \mid \cP = \cP+x\}\). We say that \(\cP\) is \textbf{non-periodic} if \(\cK = \{\mathbf{0}\}\), otherwise it is \textbf{periodic}.
\end{definition}

Local isomorphism and local derivation respect inclusion of periods:

\begin{lemma}\label{lem:periods under LI and LD}
If \(\cP \LI \cQ\) then \(\cK_{\cP} \geq \cK_{\cQ}\). Similarly, if \(\cP \LD \cQ\), then \(\cK_{\cP} \leq \cK_{\cQ}\). In particular, if \(\cP \LIs \cQ\) or \(\cP \MLD \cQ\) then \(\cK_{\cP} = \cK_{\cQ}\).
\end{lemma}

\begin{proof}
Let \(g \in \cK_{\cQ}\). Clearly, this is equivalent to \(\cQ[x] = \cQ[x+g]\) for all \(x \in E\). Let \(U \coloneqq \{\mathbf{0},g\}\). Take an arbitrary \(x \in E\) and its corresponding two-point patch \(p = \cP[x,U]\). If \(\cP \LI \cQ\), we must also have \(p \triangleleft \cQ\), that is, \(p = \cQ[y,U]\) for some \(y \in E\). Thus, \(\cP[x] = \cQ[y] = \cQ[y+g] = \cP[x+g]\). Since \(x \in E\) was arbitrary, \(g \in \cK_{\cP}\), as required.

Now suppose that \(\cP \LD \cQ\), with derivation radius \(c\), and let \(g \in \cK_{\cP}\). Let \(x \in E\) be arbitrary. Since \(g \in \cK_{\cP}\), we have that \(\cP[x,c] = \cP[x+g,c]\), thus \(\cQ[x] = \cQ[x+g]\). Since \(x \in E\) was arbitrary, it follows that \(g \in \cK_{\cQ}\).
\end{proof}

Unlike in the case of tilings and Delone set, groups of periods need not be discrete here. For instance, consider an `empty' pattern, one completely devoid of any distinguishing features, given by \(\cP[x] = a\) for constant \(a \in A\). Then \(\cK_{\cP} = E\). A return discreteness property of \(\cP\) ensures discreteness of \(\cK_{\cP}\), see Definition \ref{def:return discrete}, although we will not generally require this in our results. We will, however, at least need our groups of periods to be topologically closed, which will ensure that the index \([\cK : L\cK]\) of the inflated subgroup of periods is always finite. But this is not a strong requirement, it will follow from weaker separation properties required later. The fact needed for the following definition, on closed Euclidean subgroups, is well-known:

\begin{definition}\label{def:discrete component}
Let \(G \leqslant E\) be a closed subgroup of \(E \cong \R^d\). Then \(G \cong V \oplus K\), where \(V \leqslant G\) is the vector subspace given by the connected component of \(G\) containing the origin, and \(K = G/V \cong \Z^k\) for some \(0 \leq k \leq d\). We say that \(G\) has \textbf{trivial discrete component} if \(K\) is the trivial group, that is, when \(G\) is connected.
\end{definition}

Of course, if \(G\) is discrete then \(V\) above is trivial, so \(G\) has trivial discrete component if and only if \(G = \{\mathbf{0}\}\) is itself trivial. When we allow patterns to have non-discrete groups of periods \(\cK\), having trivial discrete component is typically the suitable replacement of \(\cK\) being trivial (i.e., non-periodicity) for our recognisability results.

\begin{definition}\label{def:discretely periodic}
A pattern \(\cP\) is \textbf{discretely periodic} if \(\cK_{\cP}\) has non-trivial discrete component. Otherwise, we call \(\cP\) \textbf{discretely non-periodic}.
\end{definition}

\begin{lemma}\label{lem:periods of distorted patterns}
For an affine automorphism \(T(x) = L(x) + z\) i.e., with \(L\) linear, \(\cK_{T\cP} = L\cK_{\cP}\).
\end{lemma}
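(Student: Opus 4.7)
The plan is to unpack the definition of the group of periods for $A\cP$ and reduce it directly to a statement about periods of $\cP$, by leveraging the explicit formula $(A\cP)[x] = \cP[A^{-1}x]$ given in Definition \ref{def:pattern}. A vector $y \in E$ lies in $\cK_{A\cP}$ precisely when $(A\cP)[x] = (A\cP)[x+y]$ for every $x \in E$, so the whole argument will be about converting this translation of $A\cP$ by $y$ into an equivalent translation of $\cP$.

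First I would observe that since $A$ is the affine map $A(x) = Lx + z$, its inverse is $A^{-1}(x) = L^{-1}(x-z)$, and consequently $A^{-1}(x+y) = A^{-1}(x) + L^{-1}(y)$: the translation part $z$ cancels out in differences, and only the linear part $L$ is seen. Applying the defining formula, $y \in \cK_{A\cP}$ amounts to
\[
\cP[A^{-1}x] = \cP[A^{-1}x + L^{-1}y] \quad \text{for all } x \in E.
\]
Since $A$ is a bijection of $E$, the substitution $w = A^{-1}x$ lets $w$ range over all of $E$, so this is equivalent to $\cP[w] = \cP[w + L^{-1}y]$ for all $w \in E$, i.e.\ to $L^{-1}y \in \cK_{\cP}$, i.e.\ to $y \in L\cK_{\cP}$. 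That chain of equivalences gives $\cK_{A\cP} = L\cK_{\cP}$ directly.

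There is essentially no obstacle here: the only thing to be careful about is the sign/translation convention when translating patterns (so that $\cP = \cP + u$ correctly matches $\cP[x] = \cP[x - u]$ from Definition \ref{def:pattern}) and the cancellation of $z$ in $A^{-1}(x+y) - A^{-1}(x)$, which is what makes the period group depend only on the linear part of $A$. If desired, one could phrase the whole thing more compactly by invoking Lemma \ref{lem:pattern properties}(2) with $V = E$, but the direct computation above is essentially a one-line verification.
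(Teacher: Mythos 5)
Your proposal is correct and follows essentially the same argument as the paper: both proofs are a direct unwinding of the definition $(A\cP)[Ax]=\cP[x]$ together with the observation that the translation part $z$ cancels, so that $g\in\cK_{\cP}$ corresponds exactly to $L(g)\in\cK_{A\cP}$ (the paper phrases it via $A(x+g)=A(x)+L(g)$ rather than via $A^{-1}$, which is an immaterial difference). No gaps.
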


\begin{proof}
We have \(T(x+g) = T(x) + L(g)\). Thus, by the definition of the distorted pattern, \(\cP[x] = \cP[x + g]\) is equivalent to \((T\cP)[T(x)] = (T\cP)[T(x) + L(g)]\). Now, \(\cP[x] = \cP[x+g]\) for arbitrary \(x \in E\) is equivalent to \(g \in \cK_{\cP}\), and similarly \((T\cP)[T(x)] = (T\cP)[T(x) + L(g)]\) for arbitrary \(T(x) \in E\) is equivalent to \(L(g) \in \cK_{T\cP}\). It follows that \(\cK_{T\cP} = L\cK_{\cP}\), as required.
\end{proof}

\begin{definition}\label{def:expansive}
A linear map \(L \colon E \to E\) is \textbf{expansive} if all eigenvalues of \(L\) are strictly larger than \(1\) in modulus.
\end{definition}

It can be shown that \(L\) being expansive is equivalent to there existing some norm on \(E\) and \(\lambda > 1\) with \(\|L(x)\| \geq \lambda \|x\|\) for all \(x \in E\).

\begin{lemma}\label{lem:index of closed subgroups}
Let \(G\) be a closed subgroup of \(E \cong \R^d\) and \(L \colon E \to E\) be an expansion with \(LG \leqslant G\). Then the index \(I \coloneqq [G : LG] < \infty\) and \(I = 1\) if and only if \(G\) has trivial discrete component.
\end{lemma}

\begin{proof}
Let \(V \leqslant G\) be the connected component of the origin in \(G\). Since \(LG \cong G\), the connected component of \(LG\) containing the origin has dimensional equal to that of \(V\), and hence is \(V\) since \(LG \leqslant G\). So \(G\) and \(LG\) have discrete components of equal rank. Then \(I = [G : LG] = [G/V : LG/V]\) is the index of a free Abelian group inside another, of equal rank, so is finite. If \(G\) has trivial discrete component then \(G = V = LG\), so \(I = 1\). Conversely, suppose that \(G\) has non-trivial discrete component, that is \(G^+ \coloneqq G \setminus V \neq \emptyset\). Take any norm on \(E\) and \(\lambda > 1\) as described above. We have that \(G^+ \coloneqq G \setminus V\) is closed, so \(\mu \coloneqq \inf\{\|g\| \mid g \in G^+\} > 0\) is attained by some \(g \in G^+\). Suppose that \(Lh = g\) for some \(h \in G\). If \(h \in V\) then \(g = Lh \in V\), a contradiction. Otherwise, \(h \in G^+\), thus \(\|g\| = \|Lh\| \geq \lambda\|\mu\| > \|\mu\|\), a contradiction. Hence, \(g \notin LG\), as required.
\end{proof}

\subsection{Finite local complexity and repetitivity of patterns}

Our main results will not require minimality (or repetitivity) assumptions, although we will require finite local complexity, with respect to translations. In our generalised setup, this is defined (similarly to in \cite{BG13}) as follows:

\begin{definition}\label{def:FLC}
A pattern \(\cP\) has (\textbf{translational}) \textbf{finite local complexity} (or is \textbf{FLC}) if, for all \(U \cpt E\), there exists some \(K(U) \cpt E\) satisfying the following: for all \(p \in \lang_U(\cP)\), we have \(p = \cP[v,U]\) for some \(v \in K(U)\).
\end{definition}

This definition simply says that all patches of a fixed shape that appear somewhere in the pattern, in fact appear within a bounded distance of the origin. It is not hard to show that if \(\cP\) is FLC and \(\cQ \LI \cP\) or \(\cP \LD \cQ\) then \(\cQ\) is also FLC. We will later re-establish, in our generalised setting, the familiar result  \cite{Rob04,Rud89} that FLC corresponds to compactness of the associated translational hull with the local topology.

\begin{example}
Suppose that \(\cP\) is defined by a locally finite tiling, in one of the ways explained in Section \ref{sec:patterns from tilings}. If there are only finitely many tile types that may only only meet along their boundaries in finitely many different ways, modulo translation, then only finitely many patches up to any given diameter may be constructed. Thus, they must all occur within some bounded distance of the origin, so \(\cP\) is FLC in the above sense. It is clear that the converse holds too. Thus, the above notion of FLC agrees with the standard notion (e.g., see \cite{Sad08}) for locally finite tilings. Similar comments apply to (coloured) Delone sets.
\end{example}

\begin{example}
For \(n \in \N\), define \(I_n \coloneqq [1 - 2^{n-1},1 - 2^{n-2}]\) and the non-locally finite tiling in \(E = \R\) given by \(\{I_n + k \mid n \in \N, k \in \Z\}\). Let \(\cP\) be the associated pattern. It is not hard to see that \(\cK_{\cP} = \Z\). Indeed, \(\cP\) is MLD to the pattern defined by a periodic tiling \(\{ [0,1] + k \mid k \in \Z \}\) of translates of the single tile \([0,1]\). In particular, according to Definition \ref{def:FLC}, \(\cP\) is FLC. This is appropriate, given that it is essentially just a jazzy redecoration of a very basic periodic tiling of unit intervals. Moreover, as we will see, FLC corresponds compactness of the associated hull using this definition, so this seems a more useful convention here than any demand of there only being finitely many tile types, modulo translation, in this non-locally finite situation.
\end{example}

Another familiar property from Aperiodic Order is repetitivity of a pattern, meaning that every patch that appears somewhere in fact appears within a bounded neighbourhood of anywhere. This may be defined for general patterns as follows:

\begin{definition}\label{def:repetitive}
A pattern \(\cP\) is \textbf{repetitive} if, for all \(U \cpt E\), there exists some \(R(U) \cpt E\) satisfying the following: for all \(p \in \lang_U(\cP)\) and \(x \in E\), we have \(p = \cP[x+v,U]\) for some \(v \in R(U)\).
\end{definition}

Repetitivity corresponds to minimality of the associated translational hull, to be introduced in the next section. Note that a repetitive pattern is always FLC, as we may take \(K(U) = R(U)\). In the presence of FLC (although not without, for then there are counter-examples), repetitivity can also be expressed in terms of relative density of repetitions of \emph{individual} patches, rather than all \(U\)-patches for each compact \(U\).

\begin{proposition}\label{prop:FLC rep}
Suppose that \(\cP\) is FLC. It is repetitive if and only if we have the following: for all \(p \in \lang(\cP)\), there exists some \(R = R(p) \cpt E\) for which, for all \(x \in E\), there exists some \(v \in R\) with \(p = \cP[x+v,U]\).
\end{proposition}

\begin{proof}
Of course, in the above statement the shape \(U\) is determined by \(p\). This property for individual patch recurrence follows from full repetitivity, by taking \(R(p) = R(U)\). Conversely, suppose that \(\cP\) is FLC and satisfies this property. Let \(U \cpt E\) be arbitrary and \(K = K(U) \cpt E\) be as in Definition \ref{def:FLC} from FLC. Defining \(q \coloneqq \cP[\mathbf{0},K + U]\), we claim that \(R = R(U) \coloneqq K+R(q)\) satisfies the requirement for repetitivity in Definition \ref{def:repetitive}.

Indeed, let \(p \in \lang_U(\cP)\) and \(x \in E\) be arbitrary. Then \(p = \cP[y,U]\) for some \(y \in K\) and \(q = \cP[\mathbf{0},K+U] = \cP[x+v,K+U]\) for some \(v \in R\). Shifting by \(y\) (Lemma \ref{lem:pattern properties}, using \(y+U \subseteq K+U\)), we have \(p = \cP[\mathbf{0} + y,U] = \cP[x+(v+y),U]\) where \(v+y \in K+R(q) = R\), as required.
\end{proof}

\subsection{Periods from short return vectors}
The following gives a generalised notion of a repetitivity function, which allows one to ask if a given pattern has \(U\)-patches appearing sufficiently densely relative to \(U\):

\begin{definition}
Let \(\mathcal{Z} = \{U \mid U \cpt E\}\) and \(\rho \colon D \to \mathcal{Z}\) with some domain \(D \subseteq \mathcal{Z}\). We say that \(\cP\) is \textbf{\(\rho\)-repetitive} if, for all \(U \in D\), \(p \in \lang_U(\cP)\) and \(x \in E\), there exists some \(v \in \rho(U)\) with \(\cP[x+v,U] = p\). That is, given \(U \in D\), if a \(U\)-patch occurs somewhere, it appears within \(\rho(U)\) of anywhere.
\end{definition}

Note that, by Definition \ref{def:repetitive}, \(\cP\) is repetitive precisely when it is \(\rho\)-repetitive for some function \(\rho\) with domain \(D = \mathcal{Z}\). For flexibility, we allow for a smaller domain in the above definition. Clearly, \(\cP\) is repetitive if and only if it is \(\rho\)-repetitive for some \(\rho\) whose domain is final, in the sense that every \(U \cpt E\) satisfies \(U \subseteq V\) for some \(V \in D\), for example, \(D = \{B_r \mid r \geq 0\}\) or \(D = \{V^k \mid k \in \N_0\}\) when working with an \(L\)-sub pattern (with \(V^k\) given in the later Notation \ref{not:balls}).

\begin{example} \label{exp: repetitivity function}
Take any function \(f \colon \R_{\geq 0} \to \R_{\geq 0}\) and define  the corresponding repetitivity function \(\rho \colon \{B_r \mid r \geq 0\} \to \mathcal{Z}\) by \(\rho(B_r) = B_{f(r)}\). Then \(\cP\) is \(\rho\)-repetitive if (the centre of) every \(r\)-patch appears within radius \(f(r)\) of anywhere, so this is a standard notion of a repetitivity function in this case. A pattern is repetitive if and only if it is \(\rho\)-repetitive for a function \(\rho\) of this form.
\end{example}

A central observation in Solomyak's proof of unique composition is that patches of repetitive tilings that repeat too closely, relative to their size and the repetitivity function, are necessarily displaced by a period. As noted in \cite{Sol98}, this is itself a higher dimensional analogue of a main insight in Moss\'{e}'s result \cite{Mos92,Mos96} in the symbolic setting, of being \(N\)-power free for sufficiently large \(N\). The below is a quantitative statement of these in our setting:

\begin{lemma}\label{lem:repetitive and close repeat => period}
Suppose that \(\cP\) is \(\rho\)-repetitive. Take any \(U \in \mathrm{dom}(\rho)\) with \(\mathbf{0} \in U\). Suppose there exists some \(x \in E\) and \(u \in U\) for which \(\cP[x,\rho(U)] = \cP[x+u,\rho(U)]\). Then \(u \in \cK_{\cP}\).
\end{lemma}

\begin{proof}
Take any \(y \in E\) and denote \(V \coloneqq \rho(U)\). By the definition of \(\rho\)-repetitivity, there exists \(v \in V\) with \(\cP[x+v,U] = \cP[y,U]\). In particular, since \(\mathbf{0} \in U\), we have \(\cP[x+v] = \cP[y]\). Moreover, since we assume \(\cP[x,V] = \cP[x+u,V]\), and \(v \in V\), we have \(\cP[x,\{v\}] = \cP[x+u,\{v\}]\), that is, \(\cP[x+v] = \cP[x+u+v]\). Combining these, \(\cP[x+u+v] = \cP[x+v] = \cP[y]\). From \(\cP[y,U] = \cP[x+v,U]\) and \(u \in U\), we similarly have \(\cP[y+u] = \cP[x+v+u]\) and hence \(\cP[y] = \cP[y+u]\). Since \(y\) was arbitrary, \(u \in \cK_{\cP}\).
\end{proof}

\begin{example}
Consider the function \(\rho\) from Example \ref{exp: repetitivity function}, coming from the standard repetitivity function \(f(r)\) of a repetitive pattern \(\cP\). By the above, if we ever have patches about \(x\) and \(x+u\) agreeing to radius \(f(r)\), for \(\|u\| \leq r\), then \(u \in \cK\).
\end{example}

The above lemma is of no use for non-repetitive patterns. However, we observe that whilst some large patches may appear with relatively short displacements in a non-repetitive, non-periodic pattern, certain patches (those containing all patches of some smaller size) cannot:

\begin{lemma}\label{lem:close repeat => period}
Let \(U \cpt E\) with \(\mathbf{0} \in U\). Suppose there exists \(x \in E\) and \(V \cpt E\) satisfying the following: every \(U\)-patch occurs with centre within \(V\) of a point \(x\). That is, suppose that for all \(p \in \lang_U(\cP)\) there exists some \(v \in V\) with \(\cP[x+v,U] = p\). Then, if \(\cP[x,V] = \cP[x+u,V]\) for some \(u \in U\), we have \(u \in \cK_{\cP}\).
\end{lemma}

\begin{proof}
The proof is identical to that of Lemma \ref{lem:repetitive and close repeat => period}, except now the existence of the \(v \in V\) with \(\cP[x+v,U] = p\) is assumed at a particular \(x \in E\) rather than being derived from repetitivity at all \(x \in E\).
\end{proof}

\section{Pattern spaces}
\label{sec:pattern spaces}

In this section, we define the notion of a (translational) pattern space, analogous to the idea of a \emph{tiling space} with the local topology (see \cite{Sad08, BG13}). Rather than defining them in terms of more abstract topological closure properties, we use the following more direct approach, in terms of the patterns themselves:

\begin{definition}\label{def:pattern space}
Let \(\Omega \subseteq A^E\) be a non-empty set of patterns (so all patterns in \(\Omega\) are over a common ambient space and labelling set). For \(U \cpt E\) and \(U\)-patch \(p \in A^U\), we write \(p \triangleleft \Omega\) if there exists some \(\cP \in \Omega\) with \(p \triangleleft \cP\). The set of all \(U\)-patches of \(\Omega\) is denoted
\[
\lang_U(\Omega) \coloneqq \{p \in A^U \mid p \triangleleft \Omega\} = \bigcup_{\cP \in \Omega} \lang_U(\cP) 
\]
and the set of all patches (of all shapes) in \(\Omega\), its \textbf{language}, is denoted
\[
\lang(\Omega) \coloneqq \bigcup_U \lang_U(\Omega) = \bigcup_{\cP \in \Omega} \lang(\cP),
\]
where the first union is over all \(U \cpt E\). Write \(\cP \LIin \Omega\) if \(\lang(\cP) \subseteq \lang(\Omega)\), that is, if \(p \triangleleft \cP\) implies that \(p \triangleleft \Omega\). We call \(\Omega\) a \textbf{pattern space} if, for all \(\cP \in A^E\), if \(\cP \LIin \Omega\) then \(\cP \in \Omega\).
\end{definition}

Of course, if \(\cP \in \Omega\) then \(\lang(\cP) \subseteq \lang(\Omega)\), so \(\cP \in \Omega\) implies that \(\cP \LIin \Omega\). So being a pattern space means that \(\cP \in \Omega\) if and only if \(\cP \LIin \Omega\). In particular, a pattern space is determined by its language. This definition should remind the reader of that of a subshift \(X \subseteq A^\Z\) where, if a bi-infinite word \(w\) has language contained in \(X\), then \(w \in X\). Equivalently, the standard definition of being a shift space is that \(X\) is topologically closed and shift invariant in the full shift \(A^\Z\), and we will see an analogous result for pattern spaces in Corollary \ref{cor:hulls are orbit closures}. In this geometric setting, though, a pattern space should be considered as a (general dimension analogue of the) \emph{suspension} of a subshift. In particular, it will have a continuous action by \(E \cong \R^d\).

The relation \(\cP \LIin \Omega\) may be interpreted topologically, see Lemma \ref{lem:LIin is closure inclusion}. To be a space, of course we also need to define a geometry on \(\Omega\). Before explaining how, we give a few basic results and illustrative examples (and a non-example) of pattern spaces below.

\begin{example}\label{ex:full pattern space}
As always, assume that \(A \neq \emptyset\). Then taking \(\Omega = A^E\) (that is, every possible pattern), \(\Omega\) is obviously a pattern space, the `full pattern space'. However, it is somewhat pathological compared to something like the full shift: when \(\# A > 1\), it will be non-Hausdorff.
\end{example}

\begin{lemma}\label{lem:pattern spaces closed under LI and translation}
If \(\Omega\) is a pattern space, \(\cP \in \Omega\) and \(\cQ \LI \cP\), then \(\cQ \in \Omega\). In particular, \(\Omega\) is closed under translations by \(E\).
\end{lemma}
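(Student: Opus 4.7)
The plan is to unpack the definitions directly; this lemma is essentially a corollary of how \(\LIin\) is set up.

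For the first claim, I want to show that \(\cQ \LI \cP\) and \(\cP \in \Omega\) together imply \(\cQ \LIin \Omega\), after which the defining property of a pattern space gives \(\cQ \in \Omega\). To verify \(\cQ \LIin \Omega\), I fix arbitrary \(x \in E\) and \(U \cpt E\). By \(\cQ \LI \cP\), there exists \(y \in E\) with \(\cQ[x,U] = \cP[y,U]\). Since \(\cP \in \Omega\), the choice \(\cR = \cP\) witnesses the required condition from Definition \ref{def:hull}. This proves \(\cQ \LIin \Omega\), and the pattern space axiom then gives \(\cQ \in \Omega\).

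For the second claim, it suffices to show \(\cP + z \LI \cP\) for all \(z \in E\) and \(\cP \in \Omega\); the first part then yields \(\cP + z \in \Omega\). To see \(\cP + z \LI \cP\), fix \(x \in E\) and \(U \cpt E\). Using Definition \ref{def:pattern}, we have \((\cP+z)[x+u] = \cP[x+u-z]\) for every \(u \in U\), so as functions on \(U\),
\[
(\cP+z)[x,U] = \cP[x-z,U],
\]
which exhibits every \(U\)-patch of \(\cP+z\) (centred at an arbitrary \(x\)) as a \(U\)-patch of \(\cP\) (centred at \(y = x - z\)). Thus \(\cP+z \LI \cP\), as needed. (In fact \(\cP+z \LIs \cP\), but only one direction is required.)

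There is no real obstacle: both parts are immediate consequences of the definitions, once one recognises that the defining condition for a pattern space is phrased exactly so that closure under \(\LI\) from any \(\cP \in \Omega\) becomes automatic. The only small care is in the shifting identity for the second part, which can alternatively be cited from Lemma \ref{lem:pattern properties}(2) rather than re-derived.
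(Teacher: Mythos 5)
Your proposal is correct and follows essentially the same route as the paper: verify \(\cQ \LIin \Omega\) by taking \(\cR = \cP\) in Definition \ref{def:hull}, then invoke the pattern space axiom, and deduce closure under translation from \(\cP + z \LI \cP\) (the paper simply notes \(\cP \LIs \cP + z\)). Your explicit check of the identity \((\cP+z)[x,U] = \cP[x-z,U]\) from Definition \ref{def:pattern} is a fine, slightly more detailed version of the same argument.
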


\begin{proof}
By definition, \(\cQ \LI \cP\) if and only if \(\lang(\cQ) \subseteq \lang(\cP)\), and \(\cP \in \Omega\) if and only if \(\lang(\cP) \subseteq \lang(\Omega)\). So \(\lang(\cQ) \subseteq \lang(\Omega)\) i.e., \(\cQ \LIin \Omega\) and hence \(\cQ \in \Omega\), as required. For any pattern \(\cP\) and \(x \in E\), we have \(\cP \LIs \cP+x\), so it follows from the first claim that \(\Omega\) is closed under translations.
\end{proof}

\begin{definition}\label{def:hull}
Let \(\cP \in A^E\). We define its (\textbf{translational}) \textbf{hull} to be the pattern space
\[
\Omega_{\cP} \coloneqq \{\cQ \in A^E \mid \cQ \LI \cP \}.
\]
\end{definition}

\begin{lemma}
The hull of a pattern is a pattern space.
\end{lemma}

\begin{proof}
We have \(\lang(\Omega) \coloneqq \bigcup_{\cQ \in \Omega} \lang(\cQ) = \lang(\cP)\) since, for each \(\cQ \in \Omega\), by definition of the hull, \(\cQ \LI \cP\), that is, \(\lang(\cQ) \subseteq \lang(\cP)\) (and we get the reverse containment since \(\cP \in \Omega\), as \(\cP \LI \cP\)). If \(\cQ \LIin \Omega\) then, by definition, \(\lang(\cQ) \subseteq \lang(\Omega) = \lang(\cP)\), so \(\cQ \LI \cP\), hence \(\cQ \in \Omega\), as required.
\end{proof}

Clearly, \(\cP \LI \cQ\) if and only if \(\Omega_{\cP} \subseteq \Omega_{\cQ}\), since these are equivalent to \(\lang(\cP) \subseteq \lang(\cQ)\), giving another justification of this notation; along similar lines, we will see how an LD \(\cP \LD \cQ\) naturally induces a so-called LD map \(\Omega_{\cP} \LDmap \Omega_{\cQ}\) of hulls, in Proposition \ref{prop:induced LDs}. For examples that are not hulls, just being closed under local indistinguishability is generally \emph{not} sufficient to be a pattern space:

\begin{example}\label{exp:pattern space closed under LI but not LIin}
Consider the set \(X\) of patterns defined by tilings of \(E = \R^1\), of unit interval tiles all labelled \(a\), except for at most a finite string of \(n \in \N_0\) consecutive \(b\) tiles.  Then \(X\) is closed under local indistinguishability: whenever \(\cQ \LI \cP\) for some \(\cP \in X\), we also have \(\cQ \in X\) (indeed, \(\cQ\) must be a translate of \(\cP\) in this case, or an all \(a\) tiling, so \(\cQ \in X\)). However, \(X\) is \emph{not} closed under `collective' local indistinguishability, that is, \(\cP \LIin X\) does not imply \(\cP \in X\). Indeed, consider the pattern \(\cQ\) associated to an all \(b\) tiling. Then \(\cQ \LIin X\) but \(\cQ \notin X\). The space \(X\) is not complete with the local uniformity we will introduce (so allowing \(X\) to be a `pattern space' would invalidate Proposition \ref{prop:hull complete}). Including the `limit point' \(\cQ\) defines a (complete) pattern space \(\Omega = X \cup \{\cQ\}\).
\end{example}

\subsection{The local topology of pattern spaces}

Throughout, we use the `local topology' which, loosely speaking, considers two patterns to be close when they agree to a large radius about the origin, up to small translations of each. Although in the context of tiling spaces this is typically defined by a metric, we prefer to define it with a uniformity, since none of the standard arguments really need an explicit (necessarily somewhat arbitrary) distance defined, yet these arguments will retain a similar intuition. It is useful to define a uniformity, rather than merely a topology, since we will sometimes want to speak of completeness, Cauchy sequences and uniform continuity.

Firstly we will define the local uniformity (and associated local topology) and then explain how this geometrises pattern spaces for readers unfamiliar with uniformities (also see, for instance, \cite{Wil04} for an introduction to uniform spaces). Recall from Notation \ref{not:subsets} that \(B \nhd E\) means that \(B\) is a neighbourhood of \(\mathbf{0}\) in \(E\).

\begin{definition}\label{def:local uniformity}
Let \(\Omega \subseteq A^E\). We define \(\mathscr{U}' \subseteq 2^{\Omega \times \Omega}\), a base of the local uniformity, to be the following collection of subsets:
\[
\mathscr{U}' \coloneqq \{\Phi(B,U) \mid B \nhd E, \ U \cpt E\}, \text{ where}
\]
\[
\Phi(B,U) \coloneqq \{(\cP,\cQ) \in \Omega \times \Omega \mid \cP[x,U] = \cQ[y,U] \text{ for some } x,y \in B\} .
\]
The \textbf{local uniformity} \(\mathscr{U} \subseteq 2^{\Omega \times \Omega}\) on \(\Omega\) is defined from this base, as usual, by letting \(e \in \mathscr{U}\) whenever \(e \supseteq e'\) for some \(e' \in \mathscr{U}'\).
\end{definition}

The idea is that each \(e \in \mathscr{U}\) provides a uniform notion of `closeness' across \(\Omega\): we say that \(\cP\) and \(\cQ\) are \textbf{\(e\)-close} if \((\cP,\cQ) \in e\). Note that \(\cP[x,U] = \cQ[y,U]\) (as appearing in the definition of \(\Phi(B,U)\)) is equivalent to \((\cP-x)[\mathbf{0},U] = (\cQ-y)[\mathbf{0},U]\) i.e., the translations \(\cP-x\) and \(\cQ-y\) agree precisely on \(U\). We may consider \(\cP\) and \(\cQ\) to be particularly close if they are \(e\)-close for \(e = \Phi(B,U)\) with very small \(B\) and \(U\) containing a large ball centred at the origin. That is, up to small translations (in the small set \(B\)), they agree to a large distance about the origin (on the large set \(U\)). This is analogous to how one defines the standard tiling metric. We may still define the `ball' of points \(e\)-close to some \(\cQ \in \Omega\), by defining, for \(e \in \mathscr{U}\),
\[
B(\cP,e) \coloneqq \{\cQ \in \Omega \mid (\cP,\cQ) \in e\}  .
\]
This is often denoted by \(e[\cP]\) in the literature; we use the above ball notation as this will be more familiar to readers used to tiling metrics, and it suggests the correct analogy: given \(e \in \mathscr{U}\), there is always some \(e' = \Phi(B_\epsilon, B_r) \in \mathscr{U}'\) with \(e' \subseteq e\), and thus whenever \(\cQ\) is a pattern that agrees with \(\cP\) to radius \(r\), up to shifting \(\cP\) and \(\cQ\) by translations of norm at most \(\epsilon\), then we have \(\cQ \in B(\cP,e)\). Conversely, if \(\cQ\) cannot be made to agree on a sufficiently large ball, after sufficiently small translations (each relative to \(e\)), then \(\cQ \notin B(\cP,e)\).

Recall that a collection \(\mathscr{U}'\) of subsets \(e \subseteq \Omega \times \Omega\) is a \textbf{base of a uniformity} on \(\Omega\) when:
\begin{itemize}
	\item[(a)] if \(e \in \mathscr{U}'\) then \(\Delta \subseteq e\), where \(\Delta = \{(\cP,\cP) \mid \cP \in \Omega\}\);
	\item[(b)] if \(e\), \(e' \in \mathscr{U}'\) then \(f \subseteq e \cap e'\) for some \(f \in \mathscr{U}'\);
	\item[(c)] if \(e \in \mathscr{U}'\) then \(f \circ f \subseteq e\) for some \(f \in \mathscr{U}'\), where for subsets \(u\), \(v \subseteq X \times X\) (i.e., `relations' \(u\) and \(v\) on \(X\)), we denote \(u \circ v \coloneqq \{(x,z) \in X \times X \mid (x,y) \in u, \ (y,z) \in v\}\);
	\item[(d)] if \(e \in \mathscr{U}'\) then \(f^{-1} \subseteq e\) for some \(f \in \mathscr{U}'\), where for a relation \(u \subseteq X \times X\) we denote \(u^{-1} \coloneqq \{(y,x) \mid (x,y) \in u\}\).
\end{itemize}
Such a base defines a uniformity \(\mathscr{U}\) (whose elements are called `entourages') by letting \(e \in \mathscr{U}\) if \(e \supseteq e'\) for some \(e' \in \mathscr{U}'\), see \cite[Definition 35.2]{Wil04} for further details. The local uniformity defines a topology, the \textbf{local topology} on \(\Omega\), as usual. Here, the easiest approach is the definition of a topology by a neighbourhood system: a set \(X \subseteq \Omega\) is a neighbourhood of \(\cP \in \Omega\) if \(B(\cP,e) \subseteq X\) for some \(e \in \mathscr{U}\) (equivalently, some \(e = \Phi(B,U) \in \mathscr{U}'\)). So a subset \(X \subseteq \Omega\) is open if it is a neighbourhood of all of its points, and its closure is \(\overline{X} = \{\cP \in \Omega \mid B(\cP,e) \cap X \neq \emptyset \text{ for all }e \in \mathscr{U}\}\).

\begin{lemma}\label{lem:uniformity well-defined}
The set \(\mathscr{U}'\) is a well-defined uniformity base for any \(\Omega \subseteq A^E\).
\end{lemma}

\begin{proof}
Clearly (a) holds, since for all \(\Phi(B,U) \in \mathscr{U}'\) and \(\cP \in \Omega\) we have \(\cP[\mathbf{0},U] = \cP[\mathbf{0},U]\) and thus \((\cP,\cP) \in \Phi(B,U)\), since \(\mathbf{0} \in B\). Next, let \(e = \Phi(B,U)\), \(e' = \Phi(B',U') \in \mathscr{U}'\). Define \(K \coloneqq B \cap B'\), \(V \coloneqq U \cup U'\) and \(f \coloneqq \Phi(K,V) \in \mathscr{U}'\). If \((\cP,\cQ) \in f\) then \(\cP[x,V] = \cQ[y,V]\) for \(x\), \(y \in K\). Since \(x\), \(y \in K \subseteq B\), \(B'\), and \(U\), \(U' \subseteq V\), we have \((\cP,\cQ) \in e \cap e'\), so \(e \cap e' \supseteq f\), proving (b).

Given \(e = \Phi(B,U) \in \mathscr{U}'\), take any \(B' \cptn E\) (see Notation \ref{not:subsets}) satisfying \(B' - B' \subseteq B\) and let \(U' = U-B'\), which is also compact. Suppose that \((\cP,\cQ)\), \((\cQ,\cR) \in f \coloneqq \Phi(B',U')\). Thus, there exist \(x_1\), \(y_1\), \(x_2\), \(y_2 \in B'\) for which
\[
\cP[x_1,U-B'] = \cQ[y_1,U-B']\ \text{ and } \cQ[x_2,U-B'] = \cR[y_2,U-B'] .
\]
Shifting the first equation by \(-y_1\) and the second by \(-x_2\) (see Lemma \ref{lem:pattern properties} (3)), we have 
\[
\cP[x_1-y_1,U] = \cQ[\mathbf{0},U] = \cR[y_2-x_2,U] .
\]
Since \(x_1-y_1\), \(y_2-x_2 \in B'-B' \subseteq B\), we have \((\cP,\cR) \in \Phi(B,U) = e\) and thus \(f \circ f \subseteq e\), showing (c). Finally, by definition each \(e = \Phi(B,U)\) is already itself symmetric i.e., \((\cP,\cQ) \in e\) if and only if \((\cQ,\cP) \in e\), confirming (d). 
\end{proof}

The uniformity \(\mathscr{U}\) has a countable base, for example by the entourages \(e_n = \Phi(B_{1/n},B_n)\), so a pattern space \(\Omega\) is (pseudo)metrisable, see \cite[Theorem 38.3]{Wil04} (this can also be shown directly, by construction, as one does for the tiling metric), and metrisable when \(\Omega\) is Hausdorff (of which, see Section \ref{sec:separation properties} later). Thus, concepts including closures, continuity and compactness can be replaced with their sequential versions. Pattern spaces, in almost all cases of imaginable interest (namely when \(\Omega\) is a countable union of hulls \(\Omega_{\cP}\) for \(\cP \in \Omega\)), are second-countable; the proof is left as an exercise, as it is not used elsewhere.

The following shows that the `LI-in' relation may be defined topologically:

\begin{lemma}\label{lem:LIin is closure inclusion}
Suppose that \(\Omega \subseteq A^E\), where \(\Omega\) is not necessarily a pattern space. Then \(\cP \LIin \Omega\) if and only if \(\cP \in \overline{\Omega + E}\).
\end{lemma}

\begin{proof}
Let \(\cP \in \overline{\Omega+E}\) and \(p \in \lang(\cP)\), say \(p = \cP[x,U]\). Define \(e \coloneqq \Phi(B_1,(U+x)+B_1) \in \mathscr{U}\). As \(\cP\) is in the closure, there exists \(\cQ \in (\Omega+E) \cap B(\cP,e)\). Since \((\cP,\cQ) \in e\), we have \(\cP[b,(U+x)+B_1] = \cQ[b',(U+x)+B_1]\) for some \(b\), \(b' \in B_1 = B(\mathbf{0},1)\). By Lemma \ref{lem:pattern properties} (2), \(\cP[x+b,U+B_1] = \cQ[x+b',U+B_1]\) and thus, shifting by \(-b \in B_1\), we have \(p = \cQ[x+(b'-b),U]\) so \(p \triangleleft \cQ\). Since \(\cQ \in \Omega + E\), we have \(\cQ+y \in \Omega\) for some \(y \in E\), and of course \(\lang(\cQ) = \lang(\cQ+y)\), hence \(p \triangleleft \cQ+y\) too, so \(p \triangleleft \Omega\). Since the patch \(p \triangleleft \cP\) was arbitrary, we have \(\cP \LIin \Omega\), as required.

For the converse, assume \(\cP \LIin \Omega\) and take an arbitrary \(e = \Phi(B,U) \in \mathscr{U}'\). From \(\cP \LIin \Omega\), for \(p \coloneqq \cP[\mathbf{0},U] \triangleleft \cP\) we have \(p = \cQ[y,U] = (\cQ-y)[\mathbf{0},U]\) for some \(\cQ \in \Omega\) and \(y \in E\). Since \(\mathbf{0} \in B\), this shows that \(\cQ-y \in B(\cP,e) \cap (\Omega-y) \subseteq B(\cP,e) \cap (\Omega + E)\). So the latter is non-empty, and since \(e \in \mathscr{U}'\) was arbitrary we have that \(\cP \in \overline{\Omega+E}\), as required.
\end{proof}

Note that, if \(\Omega \subseteq A^E\) is translation invariant (that is, \(\Omega = \Omega+E\)), then the above says that \(\cP \LIin \Omega\) if and only if \(\cP \in \overline{\Omega}\). This shows that, as usual, hulls are orbit closures, where the closure may be considered as within the non-Hausdorff full pattern space \(A^E\), or if preferred (especially to be Hausdorff), the orbit closure within any sub-pattern space of this:

\begin{corollary}\label{cor:hulls are orbit closures}
A non-empty subspace \(\Omega \subseteq A^E\) is a pattern space if and only if it is topologically closed in \(A^E\) and translation invariant. Moreover, for any pattern space \(\Omega\) and \(\cP\), \(\cQ \in \Omega\), we have \(\cQ \in \overline{\cP+E}\) if and only if \(\cQ \LI \cP\). In particular, we have an equality of uniform spaces \(\Omega_{\cP} = \overline{\cP+E}\).
\end{corollary}

\begin{proof}
All pattern spaces are translation invariant (see Lemma \ref{lem:pattern spaces closed under LI and translation}). Supposing that \(\Omega \subseteq A^E\) is translation invariant, by definition \(\Omega\) is a pattern space if and only if \(\cP \in \Omega\) is equivalent to \(\cP \LIin \Omega\) which, by the above lemma, is equivalent to \(\Omega = \overline{\Omega}\) i.e., \(\Omega\) is closed in \(A^E\).

Since \(\cP+E \subseteq \Omega\) is translation invariant, again by the previous lemma we have that \(\cQ \in \overline{\cP+E}\) if and only if \(\cQ \LIin \cP+E\) i.e., \(\lang(\cQ) \subseteq \lang(\cP+E) = \lang(\cP)\), that is, \(\cQ \LI \cP\). Since, by definition of the hull, we also have \(\cQ \in \Omega_{\cP}\) if and only if \(\cQ \LI \cP\), we obtain \(\Omega_{\cP} = \overline{\cP+E}\). Note that, since pattern spaces are closed, taking the closure \(\overline{\Omega'}\) of a subset \(\Omega' \subseteq A^E\) does not depend on if we take the closure within \(A^E\) or within any other sub-pattern space \(\Omega' \subseteq \Omega\).
\end{proof}

Next we show that translation acts continuously on pattern spaces, making them dynamical systems. Similarly, dilation by an affine map defines a (uniform) homeomorphism of pattern spaces:

\begin{proposition}\label{prop:pattern spaces are DS}
The action of \(E\) on a pattern space \(\Omega\), given by \(x \cdot \cQ \coloneqq \cQ + x\), defines a continuous group action \(E \times \Omega \to \Omega\) of \(E\) on \(\Omega\). For any affine automorphism \(T \colon E \to E\), the map \(T \colon \Omega \to T\Omega = \{T\cQ \mid \cQ \in \Omega\}\), given by \(\cQ \mapsto T\cQ\), is a uniformly continuous homeomorphism of pattern spaces. In the particular case of a hull, this is a homeomorphism of hulls \(T \colon \Omega_{\cP} \to \Omega_{T\cP}\).
\end{proposition}

\begin{proof}
Clearly translation is a group action as \(\Omega\) is closed under translation (Lemma \ref{lem:pattern spaces closed under LI and translation}). To see that it is continuous, let \(x \in E\) and \(\cP \in \Omega\) be arbitrary. To define an arbitrarily small neighbourhood of \(x \cdot \cP = \cP+x \in \Omega\), take an arbitrary \(e = \Phi(B,U) \in \mathscr{U}'\). Take any \(B' \nhd E\) with \(B' + B' \subseteq B\) and define \(e' \coloneqq \Phi(B',U')\) with \(U' \coloneqq U-x\). We will show that, for all \(y \in x+B'\) and \(\cQ \in B(\cP,e')\), we have \(y\cdot \cQ = \cQ + y \in B(\cP+x,e)\), establishing continuity.

Indeed, if \(\cQ \in B(\cP,e')\), then there exist \(b_1\), \(b_2 \in B'\) with \(\cP[b_1,U'] = \cQ[b_2,U']\). For arbitrary \(y = x + b' \in x + B'\) this gives (by definition of the translated pattern) the middle equality of
\[
(\cP+x)[b_1,U'+x] = (\cP+x)[b_1+x,U'] = (\cQ+x)[b_2+x,U'] = (\cQ+x)[b_2,U'+x]  ,
\]
where the outside equalities use Lemma \ref{lem:pattern properties} (2). From \(U = U'+x\) then \(x = y-b'\), we have
\[
(\cP+x)[b_1,U] = (\cQ+x)[b_2,U] = ((\cQ+y)-b')[b_2,U] = (\cQ+y)[b_2+b',U] .
\]
Since \(b_1 \in B' \subset B\) and \(b_2+b' \in B'+B' \subseteq B\), we have that \(\cQ+y \in B(\cP+x,e)\), as required.

Next, let \(T(x) = L(x) + z\) and \(e = \Phi(B,U) \in \mathscr{U}'\). Let \(e' \coloneqq \Phi(L^{-1}(B),T^{-1}(U)) \in \mathscr{U}'\). We claim that, if \(\cQ \in B(\cP,e')\), then \(T\cQ \in B(T\cP,e)\). Indeed, \(\cP[L^{-1}b_1,T^{-1}U] = \cQ[L^{-1}b_2,T^{-1}U]\) for some \(b_1\), \(b_2 \in B\). By Lemma \ref{lem:pattern properties}, \((T\cP)[TL^{-1}b_1,LT^{-1}U] = (T\cQ)[TL^{-1}b_2,LT^{-1}U]\). Now, \(TL^{-1}x = x+z\) and \(LT^{-1}x = x-z\). Thus, \((T\cP)[b_1+z,U-z] = (T\cQ)[b_2+z,U-z]\) so \((T\cP)[b_1,U] = (T\cQ)[b_2,U]\) by Lemma \ref{lem:pattern properties}, hence \(T\cQ \in B(T\cP,e)\), showing that \(\cP \mapsto T\cP \in T\Omega\) is uniformly continuous.

To see that \(T\Omega\) is a pattern space, note that the equalities of \(\lang(T\cP) = L(\lang(\cP)) \subseteq L(\lang(\Omega)) = \lang(T\Omega)\) always hold, by Lemma \ref{lem:distortions of languages}. The inclusion holds if and only if \(\lang(\cP) \subseteq \lang(\Omega)\), that is, \(\cP \LIin \Omega\) (since \(L\) acts as a bijection between languages, with inverse \(L^{-1}\)). An arbitrary pattern may be written in the form \(T\cP\) and the above shows that \(T\cP \LIin T\Omega\) if and only if \(\cP \LIin \Omega\), equivalently \(\cP \in \Omega\) (since \(\Omega\) is a pattern space), which in turn is equivalent (by definition of \(T\Omega\)) to \(T\cP \in T\Omega\), as required. In the case of a hull, \(\lang(T\Omega_{\cP}) = L\lang(\Omega_{\cP}) = L(\lang(\cP)) = \lang(T\cP)\) i.e., \(\cQ \in T\Omega_{\cP}\) if and only if \(\cQ \LI T\cP\), so \(T\Omega_{\cP} = \Omega_{T\cP}\), as required.
\end{proof}

\subsection{FLC and compactness}
In this section we extend a standard result \cite{Rob04}, that compactness is equivalent to FLC, but for general pattern spaces with the local topology. For compactness, we use the equivalent condition of being complete and totally bounded (\cite[Theorem 39.9]{Wil04}). Recall that a uniform space is \textbf{totally bounded} if, for all \(e = \Phi(B,U) \in \mathscr{U}'\), there exists some finite cover \(\{A_1,\ldots,A_n\}\) of \(\Omega\) such that, for each \(i\), we have \(A_i \times A_i \subseteq e\). In other words, it is a covering of \(\Omega\) by finitely many \(e\)-small sets. To extend this beyond hulls (or orbit closures) of single patterns, we define the notion of FLC also for pattern \emph{spaces}:

\begin{definition}\label{def:FLC pattern space}
A pattern space \(\Omega\) is of \textbf{finite local complexity} (or is \textbf{FLC}) if, for each \(U \cpt E\), there are finitely many patterns \(\cP_1\), \(\cP_2\), \ldots, \(\cP_n\) (\(n\) possibly depending on \(U\)) and some \(K(U) \cpt E\) for which, for all \(p \in \lang_U(\Omega)\), there exists some \(i \in \{1,\ldots,n\}\) and \(v \in K(U)\) with \(p = \cP_i[v,U]\).
\end{definition}

The above may be read as: all \(U\)-patches in \(\Omega\) appear within a bounded distance of the origin in one of finitely many patterns. So for a pattern space of tilings (with in-radii and out-radii of all tiles positively bounded from below and above, respectively), this just means that only finitely many `uncentred' patches appear over all tilings in the space (equivalently, there are only finitely many tiles and finitely many ways for them to meet at their boundaries, modulo translation). The value of \(n\) in Definition \ref{def:FLC pattern space} may need to depend on \(U\) e.g., consider Example \ref{exp:pattern space closed under LI but not LIin}. Of course, for hulls, this notion is consistent:

\begin{lemma}\label{lem:pattern FLC <=> hull FLC}
A pattern \(\cP\) is FLC if and only if its hull \(\Omega_{\cP}\) is FLC.
\end{lemma}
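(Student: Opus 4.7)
My plan is to handle the two directions separately. The forward direction is straightforward, taking $\cP$ itself as the single reference pattern. The backward direction will require a small translation argument using Lemma \ref{lem:orbit dense in hull}, which identifies $\Omega_{\cP}$ with the orbit closure $\overline{\cP + E}$.

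For the forward direction, I would assume $\cP$ is FLC with witness $K(U)$ from Definition \ref{def:FLC}, and then take $n = 1$, $\cP_1 \coloneqq \cP \in \Omega_{\cP}$, and $K_1(U) \coloneqq K(U)$ as witnesses for $\Omega_{\cP}$. Any $\cQ \in \Omega_{\cP}$ satisfies $\cQ \LI \cP$, so for arbitrary $x \in E$ and the fixed $U$, I would first use $\cQ \LI \cP$ to write $\cQ[x,U] = \cP[y,U]$ for some $y \in E$, and then apply FLC of $\cP$ to push $y$ into $K(U)$, verifying Definition \ref{def:FLC pattern space} directly.

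For the backward direction, assume $\Omega_{\cP}$ is FLC with reference patterns $\cP_1, \ldots, \cP_n$ (which we may take inside $\Omega_{\cP}$, since only patches of elements of $\Omega_{\cP}$ need to be realised) and compact sets $K_1(U), \ldots, K_n(U)$. Applied to $\cQ \coloneqq \cP$, every $U$-patch of $\cP$ takes the form $\cP_i[y,U]$ for some $i$ and $y \in K_i(U)$. For each $i$, using $\cP_i \in \overline{\cP + E}$, I would choose $z_i \in E$ with $(\cP + z_i, \cP_i) \in \Phi(B_{1/2}, V_i)$, where $V_i \coloneqq K_i(U) + U + B_1$ builds in a fringe. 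Unpacking this entourage via Lemma \ref{lem:pattern properties} yields, for each $y \in K_i(U)$, an explicit $w(i,y) = y + b - b' - z_i \in K_i(U) + B_1 - z_i$ with $\cP_i[y,U] = \cP[w(i,y),U]$. Setting $K(U) \coloneqq \bigcup_{i=1}^n (K_i(U) + B_1 - z_i)$, a finite union of compacta and hence compact, every $U$-patch of $\cP$ has the form $\cP[v,U]$ with $v \in K(U)$, giving FLC of $\cP$.

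The main technical obstacle is the unavoidable "small shift" $b, b' \in B_{1/2}$ that appears when unpacking the local-topology convergence $\cP + z_i \to \cP_i$; this is absorbed by the unit-ball fringe added to form $V_i$, so that the shift only moves us within the buffered region, allowing all needed points $v = y + u - b'$ to remain inside $V_i$. A secondary point is the tacit assumption that the reference patterns $\cP_i$ may be taken in $\Omega_{\cP}$; this is harmless since Definition \ref{def:FLC pattern space} only requires realising patches of elements of $\Omega_{\cP}$, and a suitable set of references may always be chosen from inside the hull.
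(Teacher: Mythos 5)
Your proof is correct and follows essentially the same route as the paper: the forward direction is the trivial one with \(\cP\) itself as the single reference, and the backward direction anchors each reference pattern \(\cP_i\) inside \(\cP\) at a bounded translate \(z_i\) and takes \(K(U)\) to be the union of the translated compacta (including the same tacit step, shared with the paper, that the references may be taken in \(\Omega_{\cP}\)). The only difference is cosmetic: the paper uses \(\cP_i \LI \cP\) directly to get exact agreement on a \((K_i+U)\)-patch, whereas you pass through the orbit-closure/entourage formulation and absorb the resulting small shifts with a \(B_1\) fringe, which works but is slightly more roundabout.
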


\begin{proof}
If \(\cP\) is FLC then, trivially, so is \(\Omega_{\cP}\). Indeed, for \(U \cpt E\), we may take \(n=1\), \(\cP_1 \coloneqq \cP\) and the same subset \(K(U)\) as from Definition \ref{def:FLC pattern space}. Conversely, suppose that \(\Omega_{\cP}\) is FLC and \(U \cpt E\). Let \(\cP_1\), \ldots, \(\cP_n \in \Omega_{\cP}\) and \(K = K(U)\) be as in Definition \ref{def:FLC pattern space}. Since each \(\cP_i \LI \cP\), there exist \(z_i \in E\) for which \(\cP[z_i,K+U] = \cP_i[\mathbf{0},K+U]\). Let \(K' \coloneqq \bigcup_{i=1}^n (K+z_i) \cpt E\), which we claim satisfies the requirement of Definition \ref{def:FLC} for \(U\)-patches. Indeed, given \(p \in \lang_U(\cP)\), by Definition \ref{def:FLC pattern space} there exists some \(i\) and \(y_i \in K\) with \(p = \cP_i[y_i,U]\). Since \(\cP[z_i,K+U] = \cP_i[\mathbf{0},K+U]\), shifting by \(y_i \in K\) we have \(\cP[y_i+z_i,U] = \cP_i[y_i,U] = p\). Since \(y_i + z_i \in K + z_i \subseteq K'\) and \(p = \cP[y_i+z_i,U] \in \lang_U(\cP)\) was arbitrary, we have verified Definition \ref{def:FLC}.
\end{proof}

\begin{lemma}\label{lem:FLC<=>totally bounded}
A pattern space \(\Omega\) is totally bounded if and only if it is FLC.
\end{lemma}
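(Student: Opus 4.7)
The plan is to prove both directions using the shifting identity from Lemma~\ref{lem:pattern properties}(3), which lets us re-centre patches onto nearby points at the cost of shrinking the shape by the translation. I will work throughout with basic entourages $\Phi(B,U) \in \mathscr{U}'$, exploiting that translates of patterns stay in $\Omega$ (Lemma~\ref{lem:pattern spaces closed under LI and translation}) to reduce statements about arbitrary basepoints $x$ to the origin.

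For FLC implies totally bounded, given a basic entourage $e = \Phi(B,U)$, I first pick a symmetric compact neighbourhood $B' \cptn E$ with $B' \subseteq B$, and set $U' \coloneqq U + B'$. Applying FLC with shape $U'$ produces finitely many witnesses $\cP_1, \ldots, \cP_n$ and compact sets $K_i \coloneqq K_i(U')$. Since each $K_i$ is compact, I cover it by finitely many translates $y_{ij} + B'$, $j = 1, \ldots, m_i$, and define
\[
A_{ij} \coloneqq \{\cQ \in \Omega : \cQ[\mathbf{0}, U'] = \cP_i[y, U'] \text{ for some } y \in y_{ij} + B'\}.
\]
This finite collection covers $\Omega$ by applying the FLC condition at $x = \mathbf{0}$ to each $\cQ$. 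For $\cQ, \cR \in A_{ij}$, writing the matches as $\cP_i[y_{ij}+b_1, U']$ and $\cP_i[y_{ij}+b_2, U']$ with $b_k \in B'$, a shift by $-b_k$ (valid since $U - b_k \subseteq U + B' = U'$) yields $\cQ[-b_1, U] = \cP_i[y_{ij}, U] = \cR[-b_2, U]$, placing $(\cQ, \cR) \in e$ because $-b_1, -b_2 \in B' \subseteq B$.

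For the converse, given $U \cpt E$, I will again take a symmetric $B' \cptn E$ and set $U' \coloneqq U + B'$. Applying total boundedness to $e = \Phi(B', U')$ yields a finite cover $\{A_1, \ldots, A_n\}$ with $A_i \times A_i \subseteq e$. For each nonempty $A_i$ I pick a representative $\cP_i \in A_i$ and set $K_i \coloneqq B' + B'$. For arbitrary $\cQ \in \Omega$ and $x \in E$, the translate $\cQ - x$ lies in $\Omega$ by Lemma~\ref{lem:pattern spaces closed under LI and translation} and hence in some $A_i$, so $(\cQ-x, \cP_i) \in e$ supplies $b_1, b_2 \in B'$ with $\cQ[x + b_1, U'] = (\cQ-x)[b_1, U'] = \cP_i[b_2, U']$. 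Shifting by $-b_1$ (valid since $U - b_1 \subseteq U'$) gives $\cQ[x, U] = \cP_i[b_2 - b_1, U]$ with $b_2 - b_1 \in K_i$, which is exactly Definition~\ref{def:FLC pattern space}.

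I do not expect any serious obstacle: both directions are constructive and reduce to a careful choice of the auxiliary neighbourhood $B'$ and the enlarged shape $U' = U + B'$ to accommodate the shifts. The main points to watch are that $B'$ is chosen symmetric (so that $-B' = B'$ and $B' - B' = B' + B'$) and that the enlargement $U' = U + B'$ leaves exactly enough margin for Lemma~\ref{lem:pattern properties}(3) to apply to any shift by an element of $B'$.
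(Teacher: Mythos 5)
Your proof is correct and follows essentially the same route as the paper's: both directions enlarge the patch shape to \(U + B'\) (the paper uses \(U \pm B\)), discretise the compact sets by finitely many translates of the translation neighbourhood, and conclude via the shifting identity of Lemma \ref{lem:pattern properties}(3). The only difference is cosmetic bookkeeping in the FLC-implies-totally-bounded direction (your cells are indexed by origin-centred \(U'\)-patches agreeing with \(\cP_i\) near the net points \(y_{ij}\), while the paper fixes the patch \(\cP_i[z_i^j,U]\) and allows the matching centre to range over \(B\)).
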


\begin{proof}
Suppose that \(\Omega\) is totally bounded and let \(U \cpt E\) be arbitrary. Fix any \(B \cptn E\), say, \(B = B_1\). By total boundedness, there is a finite covering \(\{A_i\}_{i=1}^n\) of \(\Omega\) with each \(A_i \times A_i \subseteq e \coloneqq \Phi(B,U-B)\). Without loss of generality, each \(A_i \neq \emptyset\), so choose any \(\cP_i \in A_i\) for each \(i\). We claim that taking \(K(U) \coloneqq B-B\) satisfies the requirement for FLC in Definition \ref{def:FLC pattern space}. Indeed, take any \(p \in \lang_U(\Omega)\), say \(p = \cQ[x,U]\). Since the \(A_i\) cover \(\Omega\), we have that \(\cQ-x \in A_i\) for some \(i\), thus \((\cQ-x,\cP_i) \in \Phi(B,U-B)\) so that \((\cQ-x)[b,U-B] = \cP_i[b',U-B]\) for some \(b\), \(b' \in B\). By Lemma \ref{lem:pattern properties}, shifting by \(-b \in -B\),
\[
p = \cQ[x,U] = (\cQ-x)[\mathbf{0},U] = \cP_i[b'-b,U] .
\]
Since \(b'-b \in B-B = K(U)\), we have shown that \(\Omega\) is FLC.

Conversely, suppose that \(\Omega\) is FLC and take an arbitrary \(e = \Phi(B,U) \in \mathscr{U}'\) where, without loss of generality (replacing \(B\) with a subset if needed), \(B\) is compact. Apply Definition \ref{def:FLC pattern space} to \((U+B)\)-patches, defining patterns \(\cP_1\), \ldots, \(\cP_n\) and \(K = K(U+B) \cpt E\). We may cover \(K\) with finitely many translates of \(-B\), say \(K \subseteq \bigcup_{j=1}^m (-B+z_j)\). For \(i \in \{1,\ldots,n\}\) and \(j \in \{1,\ldots,m\}\), we define
\[
A_i^j \coloneqq \{\cQ \in \Omega \mid \cQ[b,U] = \cP_i[z_j,U] \text{ for some } b \in B\} .
\]
We claim that the \(A_i^j\) cover \(\Omega\). Indeed, given \(\cQ \in \Omega\), by definition of \(K\) we have \(\cQ[\mathbf{0},U+B] = \cP_i[y,U+B]\) for some \(y \in K\). By our definition of the \(z_j\) we have \(y = -b + z_j \in -B+z_j\) for some \(j \in \{1,\ldots,m\}\) and \(b \in B\). Thus, shifting by \(-y+z_j = b \in B\), we have \(\cQ[b,U] = \cP_i[z_j,U]\) so that \(\cQ \in A_i^j\), showing the \(A_i^j\) cover \(\Omega\). Moreover, each \(A_i^j \times A_i^j \subseteq e\) since, given \(\cQ_1\), \(\cQ_2 \in A_i^j\), we have \(\cQ_1[b_1,U] = \cP_i[z_j,U] = \cQ_2[b_2,U]\) for \(b_i \in B\), so \((\cQ_1,\cQ_2) \in \Phi(B,U)\), as required.
\end{proof}

\begin{proposition}\label{prop:hull complete}
Any pattern space \(\Omega\) is complete.
\end{proposition}

\begin{proof}
Let \((\cQ_i)_{i \in \mathcal{I}}\) be a Cauchy net in \(\Omega\), for a directed set \(\mathcal{I}\) (in fact, it is sufficient to only consider Cauchy sequences, see the comments below Lemma \ref{lem:uniformity well-defined}, but the argument is essentially identical nonetheless). We wish to show that \((\cQ_i)_i\) is convergent. Given \(n \in \N\), let \(e_n \coloneqq \Phi(B_{2^{-(n+1)}},B_{n+2})) \in \mathscr{U}'\). Since \((\cQ_i)_i\) is Cauchy, for all \(n \in \N\) we may find \(\overline{n} \in \mathcal{I}\) with \((\cQ_u,\cQ_v) \in e_n\) for all \(u\), \(v \geq \overline{n}\). We can obviously arrange that \(\overline{m} \geq \overline{n}\) for \(m \geq n\). Then, in particular, we have \((\cQ_{\overline{n}},\cQ_{\overline{n+1}}) \in e_n\), thus \(\cQ_{\overline{n}}[x_n,B_{n+2}] = \cQ_{\overline{n+1}}[y_n,B_{n+2}]\) for \(x_n\), \(y_n \in B_{2^{-(n+1)}}\). Shifting by \(-y_n \in B_1\),
\begin{equation}\label{eq:completeness}
\cQ_{\overline{n}}[x_n-y_n,B_{n+1}] = \cQ_{\overline{n+1}}[\mathbf{0},B_{n+1}] .
\end{equation}
Define \(z_n \coloneqq x_n - y_n\) and \(z_{\geq n} \coloneqq \sum_{i=n}^\infty z_n\), which exists since each \(\|z_n\| \leq 2^{-n}\). Define \(\cQ \in \Omega\) by setting \(\cQ[\mathbf{0},B_n] = p_n \coloneqq \cQ_{\overline{n}}[z_{\geq n},B_n]\) i.e., \(\cQ[u] \coloneqq \cQ_{\overline{n}}[z_{\geq n}+u]\) once \(n \geq \|u\|\). This is a well-defined element of \(\Omega\). Indeed, the patches \(p_n\) are nested, that is, the restriction of \(p_{n+1}\) to \(B_n\) is \(p_n\). This follows from \(\cQ_{\overline{n}}[z_{\geq n},B_n] = \cQ_{\overline{n}}[z_n + z_{\geq (n+1)},B_n] = \cQ_{\overline{n+1}}[z_{\geq (n+1)},B_n]\), where we shift Equation (\ref{eq:completeness}) by \(z_{\geq n+1}\) in the second equality, using that \(\|z_{\geq n+1}\| \leq 1\). It follows that \(\cQ\) is a well-defined pattern. To see that it is in \(\Omega\), take any patch \(p = \cQ[x,U] \triangleleft \cQ\) and \(n\) sufficiently large so that \(U+x \subseteq B_n\). For \(\cR = \cQ_{\overline{n}} \in \Omega\), we have that \(\cQ[\mathbf{0},B_n] = p_n = \cR[z_{\geq n},B_n]\) and, since \(U+x \subseteq B_n\), we have \(\cQ[\mathbf{0},U+x] = \cR[z_{\geq n},U+x]\), equivalently \(p = \cQ[x,U] = \cR[z_{\geq n}+x,U]\). So \(p \triangleleft \cR \in \Omega\) for the arbitrary \(p \triangleleft \cQ\), hence \(\cQ \LIin \Omega\), confirming \(\cQ \in \Omega\). 

Next, we show that \(\cQ_i \to \cQ\). Indeed, let \(e = \Phi(B,U) \in \mathscr{U}'\) be arbitrary and \(n \geq 1\) be such that \(B_{2^{3-n}} \subseteq B\) and \(B_n \supseteq U\). By definition of \(\overline{n} \in \mathcal{I}\), for \(i \geq \overline{n}\) we have \((\cQ_i,\cQ_{\overline{n}}) \in e_n\), that is, 
\[
\cQ_i[x,B_{n+2}] = \cQ_{\overline{n}}[y,B_{n+2}] 
\]
for \(x\), \(y \in B_{2^{-(n+1)}} \subseteq B_{2^{1-n}}\), where \(z_{\geq n} \in B_{2^{1-n}}\) too, so \(z_{\geq n} - y \in B_{2^{2-n}} \subseteq B_2\). Thus, shifting by this vector,
\[
\cQ_i[x+z_{\geq n}-y,B_n] = \cQ_{\overline{n}}[z_{\geq n},B_n] = p_n = \cQ[\mathbf{0},B_n] .
\]
Since \(x+z_{\geq n}-y \in B_{2^{3-n}} \subseteq B\) and \(B_n \supseteq U\), we have \(\cQ_i \in B(\cQ,e)\). Since \(e \in \mathscr{U}'\) and \(i \geq \overline{n}\) were arbitrary, it follows that \(\cQ_i \to \cQ\), as required.
\end{proof}

Because a uniform space is compact if and only if it is complete and totally bounded, we obtain the following from Lemma \ref{lem:FLC<=>totally bounded} and Proposition \ref{prop:hull complete} (and Lemma \ref{lem:pattern FLC <=> hull FLC} for hulls):

\begin{corollary}\label{cor:cpt<=>FLC}
A pattern space \(\Omega\) is compact if and only if it is FLC. The translational hull \(\Omega_{\cP}\) of a pattern \(\cP\) is compact if and only if \(\cP\) is FLC.
\end{corollary}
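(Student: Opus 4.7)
The plan is to assemble this directly from the three preceding lemmas, using the standard uniform-space characterisation of compactness. First I would recall the general fact (see \cite[Theorem 39.9]{Wil04}) that a uniform space is compact if and only if it is both complete and totally bounded. This reduces the first equivalence to verifying each of those properties in the pattern-space setting.

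For completeness, I would simply invoke Lemma \ref{lem:hull complete}, which states that every pattern space $\Omega$ is complete (in the local uniformity). For total boundedness, I would invoke Lemma \ref{lem:FLC<=>totally bounded}, which equates total boundedness with the FLC condition of Definition \ref{def:FLC pattern space}. Putting these together, compactness of $\Omega$ is equivalent to (completeness, which is automatic) and (total boundedness, which equals FLC), hence equivalent to $\Omega$ being FLC.

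For the second sentence, on a hull $\Omega_{\cP}$, I would first note that $\Omega_{\cP}$ is itself a pattern space (by Definition \ref{def:hull}), so the first part of the corollary applies: $\Omega_{\cP}$ is compact if and only if $\Omega_{\cP}$ is FLC. Then Lemma \ref{lem:pattern FLC <=> hull FLC} gives that $\Omega_{\cP}$ is FLC if and only if $\cP$ is FLC, chaining the equivalences to the desired conclusion.

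There is no real obstacle here; the content has already been carried out in the three cited lemmas, and this corollary is just the bookkeeping that packages them together. The only mild subtlety worth flagging is that completeness in Lemma \ref{lem:hull complete} really is used: without it, FLC (i.e.\ total boundedness) would only deliver precompactness, and one would be forced to pass to the completion, which a priori need not sit back inside a space of patterns $\cP\colon E\to S$. It is precisely the ``if you're LIin, you're in'' closure condition built into the definition of a pattern space that lets Lemma \ref{lem:hull complete} apply and makes the step from FLC to compactness (rather than merely precompactness) go through without any additional work.
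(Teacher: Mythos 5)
Your proof is correct and is exactly the paper's argument: the paper likewise deduces the corollary from the compact = complete + totally bounded characterisation, citing Lemma \ref{lem:hull complete}, Lemma \ref{lem:FLC<=>totally bounded}, and Lemma \ref{lem:pattern FLC <=> hull FLC} for the hull statement. Your remark on why completeness (via the ``LIin'' closure condition) is genuinely needed is a fair observation, though the paper does not spell it out.
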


\begin{example}\label{exp:pinwheel}
If \(\cT\) is defined (as in Section \ref{sec:patterns from tilings}) for a pinwheel tiling \cite{Rad94}, which is not FLC (with respect to translations), the translational hull \(\Omega_{\cT}\) as defined here, with the local uniformity, is complete but not totally bounded, in particular it is not compact. Of course, in this case the local topology on \(\Omega_{\cT}\) is unnatural, it requires close tilings to have perfect agreement to a large radius up to a small \emph{translation}, rather than a small rigid motion. It would be interesting to investigate which results here may be generalised beyond the local topology, for instance by defining a `rubber' uniformity similarly to how one defines the rubber topology \cite[Remark 5.6]{BG13}.
\end{example}

Although an FLC pattern space may contain infinitely many different languages of individual patterns (e.g., see the one in Example \ref{exp:pattern space closed under LI but not LIin}), there are only finitely many when restricted to each shape \(U\). More precisely:

\begin{lemma}\label{lem:FLC => finitely many configs}
For a pattern space \(\Omega \subseteq A^E\) and \(U \cpt E\), define the set of possible `configurations'
\[
\mathscr{C}_U(\Omega) \coloneqq \{\mathscr{L}_U(\cP) \mid \cP \in \Omega\} 
\]
of \(U\)-patches appearing over all \(\cP \in \Omega\). If \(\Omega\) is FLC then each \(\mathscr{C}_U(\Omega)\) is finite.
\end{lemma}

\begin{proof}
Define \(\cP_1\), \ldots, \(\cP_n \in \Omega\) and \(K = K(U+B_2) \cpt E\) as in Definition \ref{def:FLC pattern space} of FLC. Take a finite set \(S \subseteq K\) that is \(1\)-dense, that is, for each \(x \in K\) there exists some \(s \in S\) with \(s-x \in B_1\). We define maps \(F_i \colon \Omega \to 2^S\) by
\[
F_i(\cP) \coloneqq \{s \in S \mid \cP_i[s,U+B_1] \in \lang(\cP)\} .
\]
Since \(S\) is finite, there are only finitely many possible sets \(F_i(\cP) \subseteq S\), so the result follows if each \((F_1(\cP),\ldots,F_n(\cP))\) fully determines \(\lang_U(\cP)\).

To show this, given \(\cP\), \(\cQ \in \Omega\), suppose that \(F_i(\cP) = F_i(\cQ)\) for each \(i\) and take an arbitrary \(p = \cP[x,U] \in \lang_U(\cP)\). There exists some \(i\) and \(y \in K\) with \(\cP[x,U+B_2] = \cP_i[y,U+B_2]\). Since \(S\) is \(1\)-dense in \(K\), there exists \(s \in S\) and \(b \in B_1\) with \(y+b = s \in S\). Shifting by \(b\), we thus have \(\cP[x+b,U+B_1] = \cP_i[s,U+B_1]\), so \(s \in F_i(\cP)\). Since \(F_i(\cP) = F_i(\cQ)\), we also have \(s \in F_i(\cQ)\), so \(\cP_i[s,U+B_1] = \cQ[z,U+B_1]\) for some \(z \in E\). Shifting by \(-b \in B_1\), we have \(\cP_i[s-b,U] = \cQ[z-b,U]\). Since \(s-b = y\) and \(\cP_i[y,U] = \cP[x,U] = p\), we have thus shown that the arbitrary \(p \in \lang_U(\cP)\) is also in \(\lang_U(\cQ)\). The converse is proved analogously, so \(\lang_U(\cP) = \lang_U(\cQ)\), as required.
\end{proof}

\subsection{Repetitivity and minimality of pattern spaces}
In this subsection, we recover the statement in the generalised pattern space setting that repetitivity corresponds to minimality.

\begin{definition}\label{def:repetitive pattern space}
A pattern space \(\Omega\) is \textbf{repetitive} if, for all \(U \cpt E\), there exists some \(R = R(U) \cpt E\) for which, for all \(p \in \lang_U(\Omega)\) and \(\cP \in \Omega\), we have \(\cP[v,U] = p\) for some \(v \in R\).
\end{definition}

This simply says that any patch from a pattern of \(\Omega\) appears in all patterns of \(\Omega\), and with bounded gaps (by translation invariance of \(\Omega\), we can restrict attention to the origin in the above definition). If \(\Omega\) is repetitive then it is FLC, by taking \(n = 1\) and any \(\cP_1 \in \Omega\) for all \(U \cpt E\) in Definition \ref{def:FLC pattern space}, with \(K(U) = R(U)\).

\begin{lemma}\label{lem:repetitive <=> hull repetitive}
A pattern \(\cP\) is repetitive if and only if \(\Omega_{\cP}\) is repetitive.
\end{lemma}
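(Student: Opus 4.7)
The plan is to prove both directions directly from the definitions, using the orbit density Lemma \ref{lem:orbit dense in hull} and the shifting identity in Lemma \ref{lem:pattern properties} (3) in the nontrivial direction.

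For the \emph{easy} direction, assume \(\Omega_{\cP}\) is repetitive with bound \(R(U)\). Given arbitrary \(x, y \in E\) and \(U \cpt E\), apply the definition to \(\cP_1 \coloneqq \cP - x\) and \(\cP_2 \coloneqq \cP - y\), which both lie in \(\Omega_{\cP}\) by Lemma \ref{lem:pattern spaces closed under LI and translation}. This gives \(v \in R(U)\) with \((\cP-x)[\mathbf{0},U] = (\cP-y)[v,U]\); unfolding the translation, \(\cP[x,U] = \cP[y+v,U]\), so \(\cP\) is repetitive with the same function \(R\).

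For the \emph{hard} direction, assume \(\cP\) is repetitive, witnessed by \(R(U) \cpt E\). Given \(\cP_1, \cP_2 \in \Omega_{\cP}\) and \(U \cpt E\), my goal is to find \(v\) in a uniformly bounded set (independent of \(\cP_1, \cP_2\)) with \(\cP_1[\mathbf{0},U] = \cP_2[v,U]\). Since \(\cP_1 \LI \cP\), write \(\cP_1[\mathbf{0},U] = \cP[z,U]\) for some \(z \in E\). Set \(V \coloneqq U + R(U) \cpt E\); the point of this enlargement is that \(V - v \supseteq U\) for every \(v \in R(U)\), so any single shift by a vector in \(R(U)\) still lands inside the region of agreement. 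By Lemma \ref{lem:orbit dense in hull}, \(\cP_2 \in \overline{\cP + E}\), so for \(e \coloneqq \Phi(B_1, V) \in \mathscr{U}'\) there exist \(t \in E\) and \(b_1, b_2 \in B_1\) with \(\cP_2[b_1,V] = \cP[b_2+t,V]\). By repetitivity of \(\cP\), applied at \(w \coloneqq b_2 + t\), there is \(v \in R(U)\) with \(\cP[z,U] = \cP[b_2+t+v,U]\). Shifting the approximation by \(v\) via Lemma \ref{lem:pattern properties} (3) and using \(U + v \subseteq V\):
\[
\cP_2[b_1+v,U] = \cP[b_2+t+v,U] = \cP[z,U] = \cP_1[\mathbf{0},U].
\]
Thus \(b_1 + v \in B_1 + R(U)\), and taking \(R'(U) \coloneqq B_1 + R(U)\) witnesses repetitivity of \(\Omega_{\cP}\).

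The only potential obstacle is making sure the bound on the final displacement is independent of \(\cP_1, \cP_2\) and of the approximating translation \(t\) (which is not bounded). This is resolved exactly by pre-enlarging \(V\) to absorb every possible \(v \in R(U)\) before invoking the density of the orbit: once the approximation holds on \(V\), the repetitivity vector \(v\) can be freely applied without losing the agreement on \(U\), and the only uncontrolled quantity \(t\) drops out.
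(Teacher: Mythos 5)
Your proof is correct. The easy direction is exactly the paper's argument (apply repetitivity of \(\Omega_{\cP}\) to the translates \(\cP-x\), \(\cP-y\)). For the hard direction the overall skeleton also matches the paper --- enlarge the window to \(U+R(U)\), transfer the patch of the second element to \(\cP\), invoke repetitivity of \(\cP\), then shift by the return vector using Lemma \ref{lem:pattern properties}~(3) --- but you transfer \(\cP_2\)'s patch via the orbit-density statement (Lemma \ref{lem:orbit dense in hull}) with the entourage \(\Phi(B_1,U+R(U))\), accepting agreement only up to translations in \(B_1\) and absorbing this into the enlarged bound \(R'(U)=B_1+R(U)\). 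The paper instead uses the defining property of the hull directly: \(\cP_2\LI\cP\) gives \emph{exact} equality \(\cP_2[\mathbf{0},U+R(U)]=\cP[y,U+R(U)]\) for some \(y\), so no slack and no enlargement of \(R(U)\) is needed. Your detour through the uniformity is slightly heavier than necessary (exact agreement is already available by definition of \(\Omega_{\cP}\)), but it is sound --- the compactness of \(B_1+R(U)\) and its independence of \(\cP_1\), \(\cP_2\), \(t\) are handled correctly, and the sign of the approximating translation \(t\) is immaterial since \(t\) is arbitrary --- and it has the mild virtue of showing the argument survives when one only knows closeness in the local uniformity rather than literal patch containment.
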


\begin{proof}
Suppose that \(\cP\) is repetitive and let \(R = R(U)\) be as in Definition \ref{def:repetitive}. We show that it also satisfies Definition \ref{def:repetitive pattern space}. Indeed, let \(p \in \lang_U(\Omega) = \lang_U(\cP)\) and \(\cQ \in \Omega_{\cP}\) be arbitrary. Since \(\cQ \LI \cP\), there exists some \(x \in E\) with \(\cQ[\mathbf{0},U+R] = \cP[x,U+R]\). Shift this equation by some \(v \in R\) for which \(p = \cP[x+v,U]\) (which exists by repetitivity of \(\cP\)) to give \(\cQ[v,U] = p\), as required.

Conversely, suppose instead that \(\Omega\) is repetitive, \(U \cpt E\) and \(R = R(U)\) is as in Definition \ref{def:repetitive pattern space}. Let \(p \in \lang_U(\cP) = \lang_U(\Omega)\) and \(x \in E\) be arbitrary. Applying repetitivity of \(\Omega\) to \(\cP-x \in \Omega\), we have \(p = (\cP-x)[v,U] = \cP[x+v,U]\) for some \(v \in R\), as required for Definition \ref{def:repetitive}.
\end{proof}

The following is an analogue of Proposition \ref{prop:FLC rep}, that repetitivity is equivalent to relative density of repetition of each \emph{individual} patch in the FLC case:

\begin{proposition}\label{prop:FLC rep pattern spaces}
Suppose that \(\Omega\) is FLC. Then repetitivity of \(\Omega\) is equivalent to the following: for all \(p \in \lang(\Omega)\), there exists some \(R = R(p) \cpt E\) for which, for all \(\cP \in \Omega\), there exists some \(v \in R\) with \(p = \cP[v,U]\).
\end{proposition}

\begin{proof}
Again, in the above statement \(U \cpt E\) is as determined by the shape of \(p\). Clearly if \(\Omega\) is repetitive then it is also repetitive for individual patches in the proposed sense, by taking \(R(p) = R(U)\). Conversely, suppose the proposed property holds and \(\Omega\) is FLC, so we may find \(\cP_1\), \ldots, \(\cP_n\) and \(K = K(U) \cpt E\) as in Definition \ref{def:FLC pattern space}. Let \(q_i \coloneqq \cP_i[\mathbf{0},K+U]\). We claim that taking \(R(U) \coloneqq \bigcup_{i=1}^n (K + R(q_i))\) satisfies the requirement of repetitivity in Definition \ref{def:repetitive pattern space}.

Indeed, take arbitrary \(p \in \lang_U(\Omega)\) and \(\cP \in \Omega\). Then \(p = \cP_i[y,U]\) for some \(i\) and \(y \in K\). Moreover, by assumption, \(q_i \coloneqq \cP_i[\mathbf{0},K+U] = \cP[v,K+U]\) for some \(v \in R(q_i)\). Shifting by \(y\), we have \(p = \cP_i[\mathbf{0}+y,U] = \cP[v+y,U]\), where \(v+y \in K+R(q_i) \subseteq R(U)\), as required.
\end{proof}

\begin{proposition}\label{prop:repetitive <=> minimal}
Let \(\Omega\) be a pattern space. Then (1) implies (2) and (2)--(4) are equivalent. If \(\Omega\) is FLC, then (1)--(4) are all equivalent.
\begin{enumerate}
	\item \(\Omega\) is repetitive;
	\item for all \(\cP \in \Omega\), we have \(\lang(\cP) = \lang(\Omega)\);
	\item for all \(\cP\), \(\cQ \in \Omega\), we have that \(\cP \LIs \cQ\);
	\item for all \(\cP\), \(\cQ \in \Omega\), we have that \(\cP \in \overline{(\cQ + E)}\), that is, all orbits are dense, or \(\Omega\) is \textbf{minimal}.
\end{enumerate}
\end{proposition}

\begin{proof}
Let \(p \in \lang_U(\Omega)\) and \(\cP \in \Omega\) be arbitrary. If \(\Omega\) is repetitive then \(p = \cP[v,U]\) for some \(v \in R(U)\), so \(\lang(\cP) \supseteq \lang(\Omega)\). Since \(\lang(\Omega) = \bigcup_{\cP \in \Omega} \lang(\cP)\), by definition, the reverse also always holds, so (1) implies (2). The latter equality also immediately gives equivalence of (2) and (3); recall that \(\cP \LIs \cQ\) when \(\lang(\cP) = \lang(\cQ)\). Equivalence of (3) and (4) follows immediately from Corollary \ref{cor:hulls are orbit closures}.

Finally, suppose that \(\Omega\) is FLC but not repetitive. Thus, by Proposition \ref{prop:FLC rep pattern spaces}, there exists some \(p \in \lang_U(\Omega)\) for which, for all \(n \geq 1\), there exists \(\cP_n \in \Omega\) with \(\cP_n[z,U] \neq p\) for all \(z \in B_n\). Using compactness, \(\cP_n \to \cP \in \Omega\) along some subsequence. We claim that \(p \notin \lang_U(\cP)\), thus \(\lang(\cP) \neq \lang(\Omega)\), establishing that (2) implies (1). Indeed, suppose to the contrary that \(p = \cP[x,U]\), say \(x \in B_k\). By definition of the local topology, for sufficiently large \(n > k\) in the convergent subsequence we have that \(\cP_n[\mathbf{0},B_{k+1}+U] = \cP[y,B_{k+1}+U]\) for some \(y \in B_1\). Shifting by \(z = x-y \in B_{k+1} \subseteq B_n\), we have \(\cP_n[z,U] = \cP[x,U] = p\), contradicting our assumption on \(\cP_n\).
\end{proof}

\begin{remark}
By (1) implies (2) above, repetitive pattern spaces \(\Omega\) are hulls \(\Omega = \Omega_{\cP}\) (for any \(\cP \in \Omega\)). We cannot drop the assumption of FLC in the proof of (2) implying (1). The non-FLC (thus non-repetitive) pattern \(\cP\) where all points are inequivalent (see Example \ref{exp:trivial patterns}) has hull \(\Omega = \cP + E\), so \(\cQ = \cP + x \LIs \cP\) for all \(\cQ \in \Omega\), but \(\Omega \cong E\) is not compact.
\end{remark}

\subsection{Separation properties}\label{sec:separation properties}
So far in this section it has not been prohibited --- and nor has it complicated matters --- that our pattern spaces could be non-Hausdorff. Since we will usually need Hausdorff in our later results, we give conditions for it in less abstract terms of patterns. Of course, for standard examples the Hausdorff property will be essentially automatic. But the following absurd example has not been excluded from any definitions so far in this section:

\begin{example}\label{exp:rationals as point set}
Let \(\cP \colon \R \to \{0,1\}\) be defined by \(\cP[x] = 0\) if \(x \in \Q\) and \(\cP[x] = 1\) if \(x \notin \Q\). We may think of it as the `decoration' of \(E = \R\) by marking the rationals. Clearly \(\cP\) has periods \(\cK = \Q\). In particular, \(\cP\) is FLC: every \(U\)-patch has centre contained in \(K\), for any \(K \cpt \R\) with non-empty interior. All patterns locally indistinguishable from \(\cP\) are translates of \(\cP\) and \(\Omega_{\cP} \cong \R / \Q\), with the non-Hausdorff trivial (or `indiscrete') topology. Amusingly, \(\cP\) happens to be \(L\)-sub (see Definition \ref{def:L-sub}) for any \(L(x) = qx\) with non-zero \(q \in \Q\).
\end{example}

In the following definition we introduce a strong notion of separation that will force the Hausdorff property, and groups of periods of patterns all being discrete. Then we give a weaker notion, which is equivalent to the hull being Hausdorff in the FLC case. Both notions are also extended to general pattern spaces.

\begin{definition}\label{def:return discrete}
A pattern \(\cP\) is \textbf{return discrete} if there exists \(B \nhd E\) and \(V \cpt E\) for which, for all \(x \in E\), if \(\cP[x,V] = \cP[x+b,V]\) for \(b \in B\) then \(b = \mathbf{0}\).

Similarly, a pattern space \(\Omega\) is \textbf{return discrete} if there exists some \(B \nhd E\) and \(V \cpt E\) for which, for all \(\cU \in \Omega\), if \(\cU[\mathbf{0},V] = \cU[b,V]\) for \(b \in B\) then \(b = \mathbf{0}\).
\end{definition}

In words, return vectors between sufficiently large patches cannot be arbitrarily short.

\begin{example}
Suppose that \(\cP\) is a pattern defined by a tiling (say, according to the second approach of defining a pattern from a tiling in Section \ref{sec:patterns from tilings}). Suppose that all tiles contain a translate of some \(B \nhd E\) and that there is some \(V \cpt E\) so that all translates of \(V\) fully contain at least one whole tile. Then \(\cP\) is return discrete. Indeed, suppose that \(\cP[x,V] = \cP[x+b,V]\) for some \(b \in B\). Take any tile \(t\) with support fully contained in \(V+x\), and with interior containing some translate \(B+y\). Since \(\cP[x,V] = \cP[x+b,V]\), we must also have that the tile \(t+b\) appears in the tiling. But both \(t\) and \(t+b\) contain \(y+b\) in their interiors, so these tiles overlap and thus must be exactly equal, hence \(b = \mathbf{0}\), showing return discreteness of \(\cP\).

By a similar argument, if \(\Omega\) is a pattern space whose elements are defined by tilings, with a common \(B\) and \(V\) controlling the in- and out-radii of all tiles, as above, then \(\Omega\) is return discrete.
\end{example}

Similarly to the above example, a pattern defined by a Delone set is return discrete (as is a pattern space with uniform bounds on the uniformly discrete and relatively dense constants), and indeed such patterns (and pattern spaces) agree modulo MLD equivalence. A notable class of examples that are \emph{not} return discrete are uniformly discrete but non-relatively dense point sets. These are nonetheless sufficiently well behaved to be be covered by many of our main results and satisfy the following:

\begin{definition}\label{def:well-separated}
A pattern \(\cP\) is \textbf{well-separated} if there exists \(V \cpt E\) for which, for all \(x\), \(y \in E\), if \(\cP[x] \neq \cP[y]\) then there exists \(B \nhd E\) with \(\cP[x+b,V] \neq \cP[y+b',V]\) for all \(b\), \(b' \in B\).

Similarly, a pattern space \(\Omega\) is \textbf{well-separated} if there exists \(V \cpt E\) for which, for all \(\cU\), \(\cV \in \Omega\), if \(\cU[\mathbf{0}] \neq \cV[\mathbf{0}]\) then there exists \(B \nhd E\) for which \(\cU[b,V] \neq \cV[b',V]\) for all \(b\), \(b' \in B\).
\end{definition}

Note that the neighbourhood \(B\) of \(\mathbf{0}\) in the above definition can depend on the local patches in question (but is fixed in the definition of return discrete).

\begin{example}
Let \(\cP\) be a pattern defined by (see Section \ref{sec:patterns from point sets}) a uniformly discrete point set \(\Lambda\) (possibly coloured, but not necessarily relatively sense). Suppose that \(\cP[x] \neq \cP[y]\). Thus, precisely one of \(x \in \Lambda\) or \(y \in \Lambda\) with some colour \(\ell\) (with the other either being not in \(\Lambda\), or in it but with different colour). Without loss of generality, \(\cP[x] = \ell\) and \(\cP[y] \neq \ell\). By uniform discreteness (in fact, one can even assume less), there exists some \(B \nhd E\) with \(B \subseteq B_1\) for which no point of \(y+(B-B)\) is in \(\Lambda\) with colour \(\ell\). Then \(\cP[x+b,B_1] \neq \cP[y+b',B_1]\) for all \(b\), \(b' \in B_1\). Indeed, for patch \(p \coloneqq \cP[x+b,B_1]\), we have \(p(-b) = \ell\), where \(b \in B\), thus \(-b \in -B \subseteq B_1\). In contrast, for \(q \coloneqq \cP[y+b',B_1]\), we have that \(q(-z) \neq \ell\) for all \(z \in B\). Indeed, otherwise, \(\cP[y+b'-z] = \ell\), meaning that \(y+(b'-z) \in y+(B-B)\) is in \(\Lambda\) with colour \(\ell\), contradicting our assumption.

Thus, taking for instance \(V = B_1\) in Definition \ref{def:well-separated}, uniformly discrete point sets define well-separated patterns, although they will not be return discrete unless they are also relatively dense. Similarly, a pattern space defined by uniformly discrete point sets will be well-separated.
\end{example}

A simple consequence of a pattern space being Hausdorff (c.f.\ Example \ref{exp:rationals as point set}) is the following:

\begin{lemma}\label{lem:HD=>periods closed}
For a Hausdorff pattern space \(\Omega\), for all \(\cP \in \Omega\) we have that \(\cK_{\cP}\) is closed in \(E\).
\end{lemma}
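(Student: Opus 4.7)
The plan is to show directly that $\cK_{\cP}$ contains all of its limit points, using only Hausdorffness of $\Omega$ and continuity of the translation action. Since the local uniformity $\mathscr{U}$ has a countable base (see the remarks following Lemma \ref{lem:uniformity well-defined}), we may work with sequences; but the same argument works verbatim with nets, so nothing is lost in the general case either.

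Concretely, I would take an arbitrary $x \in \overline{\cK_{\cP}}$ and pick a sequence $x_n \in \cK_{\cP}$ with $x_n \to x$ in $E$. By the very definition of $\cK_{\cP}$ (Definition \ref{def:periods}) we have $\cP + x_n = \cP$ for every $n$, so the sequence $(\cP + x_n)_n$ in $\Omega$ is literally the constant sequence $\cP$, and hence converges to $\cP$ in the local topology. On the other hand, Lemma \ref{lem:hull is DS} states that the action map $E \times \Omega \to \Omega$, $(y,\cQ) \mapsto \cQ + y$, is (jointly) continuous, so from $x_n \to x$ in $E$ and the trivially convergent sequence $\cP \to \cP$ in $\Omega$ we obtain $\cP + x_n \to \cP + x$ in $\Omega$.

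We now have a single sequence in $\Omega$ converging to both $\cP$ and $\cP + x$. Because $\Omega$ is Hausdorff by hypothesis, limits are unique, and therefore $\cP + x = \cP$, i.e.\ $x \in \cK_{\cP}$. This proves $\overline{\cK_{\cP}} \subseteq \cK_{\cP}$, so $\cK_{\cP}$ is closed in $E$.

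There is no real obstacle here: the only ingredients are the already-established continuity of the translation action (Lemma \ref{lem:hull is DS}) and uniqueness of limits in a Hausdorff space. The mild point worth being careful about is distinguishing convergence of the argument $x_n \to x$ in $E$ from convergence in $\Omega$; continuity of the action is precisely what bridges the two, and once that is invoked the argument collapses to a one-line uniqueness-of-limits observation.
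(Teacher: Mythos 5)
Your proof is correct and is essentially identical to the paper's own argument: take a sequence in \(\cK_{\cP}\) converging in \(E\), use the continuity of translation from Lemma \ref{lem:hull is DS} together with uniqueness of limits in the Hausdorff space \(\Omega\) to conclude the limit is again a period. No gaps; the remark about the countable base is harmless but not even needed, since uniqueness of sequential limits holds in any Hausdorff space and closedness in the metric space \(E\) is sequential anyway.
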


\begin{proof}
Let \(\cP \in \Omega\) and \((g_n)_{n=1}^\infty\) be any sequence in \(\cK \coloneqq \cK_{\cP}\) with \(g_n \to g \in E\) convergent in \(E\). By Proposition \ref{prop:pattern spaces are DS}, translation \(E \times \Omega \to \Omega\) is continuous, so \(\cP + g_n \to \cP + g\). Moreover, \(\cP + g_n = \cP\) for all \(n\) since each \(g_n \in \cK\), so \(\cP + g_n = \cP \to \cP\). If \(\Omega\) is Hausdorff, the limit of \((\cP + g_n)_n\) must be unique, so \(\cP = \cP + g\) and thus \(g \in \cK\) too. Since \((g_n)_{n=1}^\infty\) was an arbitrary sequence in \(\cK\) that converges in \(E\), we have that \(\cK \leqslant E\) is topologically closed, as required.
\end{proof}

\begin{lemma}\label{lem:separation for pattern spaces}
A pattern \(\cP\) is return discrete if and only if \(\Omega_{\cP}\) is return discrete. Similarly, \(\cP\) is well-separated if and only if \(\Omega_{\cP}\) is well-separated.
\end{lemma}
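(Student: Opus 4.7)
The plan is to prove both equivalences by the same template: the ``only if'' directions ($\Omega_{\cP}$ well-behaved implies $\cP$ well-behaved) come immediately from instantiating the pattern-space condition at translates $\cP - x \in \Omega_{\cP}$, while the ``if'' directions require promoting a local witness on $\cP$ to one valid on all $\cU \in \Omega_{\cP}$, using that \(\cU \LI \cP\) transfers large patches of $\cU$ centred at the origin into patches of $\cP$.

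For the return-discrete direction, suppose $\cP$ is return discrete with witnesses $B_0, V_0$ (WLOG $B_0$ is a compact neighbourhood of $\mathbf{0}$). I would set $B' \coloneqq B_0$ and $V' \coloneqq V_0 + B_0$, and given any $\cU \in \Omega_{\cP}$ with $\cU[\mathbf{0},V'] = \cU[b,V']$ for some $b \in B'$, use $\cU \LI \cP$ to find $y \in E$ with $\cU[\mathbf{0},V'] = \cP[y,V']$. Restricting this equality to $V_0$ and to $b + V_0$ (both contained in $V'$ by choice of $V'$, justified by Lemma \ref{lem:pattern properties}) yields $\cP[y,V_0] = \cU[\mathbf{0},V_0]$ and $\cP[y+b,V_0] = \cU[b,V_0]$; combining with the hypothesis $\cU[\mathbf{0},V_0] = \cU[b,V_0]$ (obtained by restriction from $V'$) gives $\cP[y,V_0] = \cP[y+b,V_0]$, so $b = \mathbf{0}$ by return discreteness of $\cP$. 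The converse direction uses $\cU \coloneqq \cP - x$ for arbitrary $x$ and reads off the pattern-level statement using Lemma \ref{lem:pattern properties}(2).

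For the well-separated direction, I would fix any compact neighbourhood $K$ of $\mathbf{0}$ and set $V' \coloneqq V_0 + K$, where $V_0$ is the pattern-level witness. Given $\cU, \cV \in \Omega_{\cP}$ with $\cU[\mathbf{0}] \neq \cV[\mathbf{0}]$, find $x, y \in E$ with $\cU[\mathbf{0},V'] = \cP[x,V']$ and $\cV[\mathbf{0},V'] = \cP[y,V']$ by local indistinguishability. Restricting to $\{\mathbf{0}\} \subseteq V'$ gives $\cP[x] = \cU[\mathbf{0}] \neq \cV[\mathbf{0}] = \cP[y]$, so the well-separated property of $\cP$ yields a $B \nhd E$ with $\cP[x+b,V_0] \neq \cP[y+b',V_0]$ for all $b, b' \in B$. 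Take $B' \coloneqq B \cap K$: for $b, b' \in B' \subseteq K$, shifting the $V'$-patch identifications by $b$ resp.\ $b'$ (valid since $V_0 + b, V_0 + b' \subseteq V_0 + K = V'$) gives $\cU[b,V_0] = \cP[x+b,V_0] \neq \cP[y+b',V_0] = \cV[b',V_0]$, and since $V_0 \subseteq V'$ this inequality upgrades to $\cU[b,V'] \neq \cV[b',V']$. The converse again just applies the hull-level condition at $\cU = \cP - x, \cV = \cP - y$.

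The main delicate point is the choice of ``enlarged'' ambient patch shape $V'$: in each direction we need $V'$ to dominate both $V_0$ and $V_0 + B'$ (resp.\ $V_0 + K$) so that the $V'$-agreement between $\cU$ and a local translate of $\cP$ can be shifted by any element of the neighbourhood $B'$ while still comparing on $V_0$. In the well-separated case there is the additional subtlety that the neighbourhood $B$ coming from well-separatedness of $\cP$ is not known in advance, which is why I pre-emptively replace it by $B \cap K$; the compactness of $K$ and the uniformity of $V' = V_0 + K$ over all pairs $(\cU, \cV)$ are what make the argument fit the definition. The rest is routine bookkeeping with the restriction/shifting identities of Lemma \ref{lem:pattern properties}.
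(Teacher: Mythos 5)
Your proposal is correct and follows essentially the same route as the paper: use \(\cU \LI \cP\) (resp.\ \(\cU, \cV \LI \cP\)) to match a slightly enlarged origin-centred patch of the hull element with a patch of \(\cP\), then shift by vectors in the small neighbourhood and invoke the pattern-level property, with the converse directions obtained by applying the space-level condition to translates \(\cP - x\); the only difference is cosmetic (you enlarge the witness set to \(V_0 + B_0\) or \(V_0 + K\), whereas the paper keeps \(V\) and enlarges only the transfer patch to \(V + B\) or \(V + B_1\)). The single point to tidy is that your restriction to \(\{\mathbf{0}\} \subseteq V'\) in the well-separated direction needs \(\mathbf{0} \in V' = V_0 + K\), which is not automatic; as in the paper, enlarge \(V_0\) so that \(\mathbf{0} \in V_0\) (this is harmless, since the well-separation witness may always be enlarged).
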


\begin{proof}
Let \(\cP\) be return discrete and take \(B \nhd E\) and \(V \cpt E\), as in Definition \ref{def:return discrete}. Without loss of generality, perhaps taking \(B\) smaller, we may assume that \(B \cptn E\). Let \(\cU \in \Omega = \Omega_{\cP}\) be arbitrary and suppose that \(\cU[\mathbf{0},V] = \cU[b,V]\) for some \(b \in B\). Take \(x \in E\) with \(\cU[\mathbf{0},B+V] = \cP[x,B+V]\), which exists since \(\cU \LI \cP\). Shifting by \(b\), we have \(\cU[b,V] = \cP[x+b,V]\). Thus, \(\cP[x,V] = \cP[x+b,V]\), so \(b = \mathbf{0}\), as required. Conversely, suppose that \(\Omega\) is return discrete and let \(V \cpt E\), \(B \nhd E\) be as in Definition \ref{def:return discrete}. Let \(x \in E\) be arbitrary. If \(\cP[x,V] = \cP[x+b,V]\) then \((\cP-x)[\mathbf{0},V] = (\cP-x)[b,V]\), so \(b = \mathbf{0}\), as required.

Suppose, instead, that \(\cP\) is well-separated and take \(V \cpt E\) as in Definition \ref{def:well-separated}. Making \(V\) larger, if necessary, we may suppose that \(\mathbf{0} \in V\). Take arbitrary \(\cU\), \(\cV \in \Omega\) with \(\cU[\mathbf{0}] \neq \cV[\mathbf{0}]\). Take \(x\), \(y \in E\) with \(\cP[x,V+B_1] = \cU[\mathbf{0},V+B_1]\) and \(\cP[y,V+B_1] = \cV[\mathbf{0},V+B_1]\). In particular, \(\cP[x] = \cU[\mathbf{0}] \neq \cV[\mathbf{0}] = \cP[y]\), so there exists some \(B \nhd E\) for which \(\cP[x+b,V] \neq \cP[y+b',V]\) for all \(b\), \(b' \in B\). Without loss of generality, \(B \subseteq B_1\). Then for all \(b\), \(b' \in B \subseteq B_1\), we have \(\cU[b,V] = \cP[x+b,V] \neq \cP[y+b',V] = \cV[b',V]\) so \(\Omega\) is well-separated. Conversely, suppose that \(\Omega\) is well-separated and take \(V \cpt E\) as in Definition \ref{def:well-separated}. Let \(x\), \(y \in E\) be arbitrary with \(\cP[x] \neq \cP[y]\). Equivalently, \((\cP-x)[\mathbf{0}] \neq (\cP-y)[\mathbf{0}]\), so there exists \(B \nhd E\) with \((\cP-x)[b,V] \neq (\cP-y)[b',V]\), equivalently \(\cP[x+b,V] \neq \cP[y+b',V]\), for all \(b\), \(b' \in B\). Hence, \(\cP\) is well-separated.
\end{proof}

\begin{lemma}\label{lem:return discrete => well-separated}
Let \(\Omega\) be a return discrete pattern space. Then \(\Omega\) is well-separated and, for \(B \nhd E\) as in Definition \ref{def:return discrete}, for all \(\cQ \in \Omega\) we have that \(\cK_{\cQ} \cap B = \{\mathbf{0}\}\). In particular, each \(\cK_{\cQ}\) is discrete.
\end{lemma}

\begin{proof}
Let \(V \cpt E\) and \(B \nhd E\) be as in Definition \ref{def:return discrete}. Without loss of generality, \(\mathbf{0} \in V\) and we may take \(B\) compact (by taking \(V\) larger and \(B\) smaller, if necessary). Take arbitrary \(\cU\), \(\cV \in \Omega\) with \(\cU[\mathbf{0}] \neq \cV[\mathbf{0}]\). Suppose, for a contradiction, that for arbitrarily large \(n \in \N\) we may find \(b_n\), \(b'_n \in B_{1/n}\) with \(\cU[b_n,V+B] = \cV[b_n',V+B]\). Shifting by \(-b_n' \in B_{1/n} \subseteq B\) (which holds for sufficiently large \(n\)), so that \(V-b_n' \subseteq V+B\), and analogously with \(m \in \N\),
\[
\cU[b_n-b_n',V] = \cV[\mathbf{0},V] = \cU[b_m-b_m',V] .
\]
By the return discrete property, this can only happen if \((b_n-b_n') = (b_m-b_m')\) once \(n\) and \(m\) are sufficiently large that \(B_{(2/n + 2/m)} \subseteq B\), since in that case \((b_n-b_n') - (b_m-b_m') \in B_{2/n}+B_{2/m} \subseteq B\). Thus, \(b_n - b_n'\) is eventually constant. But since \(b_n\), \(b_n' \to \mathbf{0}\), we must have \(b_n - b_n' \to \mathbf{0}\) and thus \(b_n = b_n'\) for sufficiently large \(n\). In particular, \(\cU[b_n,V+B] = \cV[b_n,V+B]\) for some \(n \in \N\). Shifting by \(-b_n \in B\), we have that \(\cU[\mathbf{0},V] = \cV[\mathbf{0},V]\) and, since \(\mathbf{0} \in V\), we have \(\cU[\mathbf{0}] = \cU[\mathbf{0}]\), contradicting our assumption, so \(\Omega\) is well-separated.

Now suppose that \(\cQ \in \Omega\) and \(g \in \cK_{\cQ}\). Then, in particular, we have that \(\cQ[\mathbf{0},V] = (\cQ-g)[\mathbf{0},V] = \cQ[g,V]\) so if \(g \in B\) then \(g = \mathbf{0}\), as required.
\end{proof}

\begin{proposition}\label{prop:well-sep <=> HD}
If \(\Omega\) is a well-separated pattern space then \(\Omega\) is Hausdorff. Conversely, if \(\Omega\) is FLC and \(\Omega\) is Hausdorff, then \(\Omega\) is well-separated.

Thus, \(\Omega\) is compact Hausdorff if and only if it is FLC and well-separated.
\end{proposition}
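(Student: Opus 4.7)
The plan is to split the biconditional into its two directions, handling the easy forward implication first and addressing the main work in the converse.

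For the forward direction (well-separated $\Rightarrow$ Hausdorff), I would show that any distinct $\cU, \cV \in \Omega$ are separated by a base entourage. Since $\cU \neq \cV$ as functions, pick $x \in E$ with $\cU[x] \neq \cV[x]$. By Lemma~\ref{lem:pattern spaces closed under LI and translation} the translates $\cU - x, \cV - x$ lie in $\Omega$ and disagree at $\mathbf{0}$, so applying well-separatedness (with its witnessing compact $V$) yields some $B \nhd E$ with $(\cU - x)[b, V] \neq (\cV - x)[b', V]$ for all $b, b' \in B$. By the shifting rule of Lemma~\ref{lem:pattern properties}(2), this is equivalent to $\cU[b, V+x] \neq \cV[b', V+x]$ for all $b, b' \in B$, that is, $(\cU, \cV) \notin \Phi(B, V+x) \in \mathscr{U}'$. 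Hence $\bigcap_{e \in \mathscr{U}} e = \Delta$ and $\Omega$ is Hausdorff.

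For the converse (FLC and Hausdorff $\Rightarrow$ well-separated), I would argue by contradiction. If $\Omega$ is not well-separated, then for each $n \in \N$, taking $V = B_n$ in the negation of Definition~\ref{def:well-separated} produces a pair $\cU_n, \cV_n \in \Omega$ with $\cU_n[\mathbf{0}] \neq \cV_n[\mathbf{0}]$ together with $b_n, b'_n \in B_{1/n}$ satisfying $\cU_n[b_n, B_n] = \cV_n[b'_n, B_n]$ (diagonalizing through the ``for all $B \nhd E$'' quantifier). By FLC with Corollary~\ref{cor:cpt<=>FLC}, $\Omega$ is compact, so pass to a subsequence with $\cU_n \to \cU$ and $\cV_n \to \cV$. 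Setting $\cA_n := \cU_n - b_n$ and $\cB_n := \cV_n - b'_n$, continuity of the $E$-action (Lemma~\ref{lem:hull is DS}) with $b_n, b'_n \to \mathbf{0}$ gives $\cA_n \to \cU$ and $\cB_n \to \cV$, while $\cA_n$ and $\cB_n$ agree exactly on $B_n$ at the origin, so $(\cA_n, \cB_n) \in \Phi(B, B_r)$ for any base entourage once $n \geq r$. Uniqueness of limits in the Hausdorff space $\Omega$ then forces $\cU = \cV$.

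The hard part is converting the persistent disagreement $\cU_n[\mathbf{0}] \neq \cV_n[\mathbf{0}]$, together with the derived equality $\cU = \cV$, into a genuine Hausdorff violation: pattern values at a single point are not continuous under the local topology, so no immediate contradiction is available. My plan is to exploit FLC at the scale $U = \{\mathbf{0}\}$ to write $\cU_n[\mathbf{0}] = \cP_{i_n}[y_n]$ and $\cV_n[\mathbf{0}] = \cP_{j_n}[z_n]$ from finitely many base patterns with bounded centres, then extract a further subsequence stabilizing $i_n, j_n$ and with $y_n \to y$, $z_n \to z$ in the compact $K_i, K_j$. Using the exact agreement $\cA_n[\mathbf{0}, B_n] = \cB_n[\mathbf{0}, B_n]$, one can rewrite $\cU_n[\mathbf{0}] = \cA_n[-b_n]$ and $\cV_n[\mathbf{0}] = \cA_n[-b'_n]$, exhibiting the single pattern $\cA_n$ taking distinct labels at the two points $-b_n, -b'_n \to \mathbf{0}$. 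Translating those points to the origin yields two sequences $\cA_n + b_n$ and $\cA_n + b'_n$ in $\Omega$, both converging to $\cU = \cV$ but remaining in the ``unequal at origin'' relation $D$; combined with FLC at larger scales (to lock in the patch types) and the diagonal construction, this produces two distinct limit points in $\Omega$ that cannot be separated by any entourage, contradicting Hausdorffness. The crux is propagating the FLC stabilization far enough through the diagonal extraction to turn the finite-scale disagreement into a limit-level Hausdorff obstruction, and this is where the proof's real work lies.
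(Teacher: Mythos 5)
Your forward direction is fine and is essentially the paper's argument verbatim: pick a point of disagreement, translate it to the origin, invoke well-separatedness and shift back to conclude \((\cU,\cV)\notin\Phi(B,V+x)\). The problem is the converse, and you have correctly located but not closed the gap. Your set-up (extracting \(\cU_n,\cV_n\) with \(\cU_n[\mathbf{0}]\neq\cV_n[\mathbf{0}]\) but agreement on \(B_n\) up to translations in \(B_{1/n}\), then using compactness) parallels the paper's, but the concluding plan does not work as described: the two sequences \(\cA_n+b_n\) and \(\cA_n+b'_n\) both converge to the \emph{same} point \(\cU=\cV\), so they cannot produce ``two distinct limit points that cannot be separated by any entourage''. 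Disagreement of labels at a single point is not preserved in the limit (evaluation at a point is not continuous in the local topology — think of approaching a tiling from two sides of a tile boundary), so no Hausdorff violation can be extracted this way, no matter how the FLC stabilisation is threaded through the diagonal argument. The contradiction has to be obtained at a \emph{finite} stage, against the assumption \(\cU_n[\mathbf{0}]\neq\cV_n[\mathbf{0}]\) itself, and that requires an idea absent from your sketch.

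The missing idea in the paper's proof is the period argument. One only needs a single limit \(\cU_n\to\cQ\). Choose \(N\) so that \(\cQ[\mathbf{0},N]\) wholly contains all \(4\)-patches of \(\cQ\) (FLC), and fix \(n\) large with \(\cU_n[\mathbf{0},N+5]=\cQ[b,N+5]\), \(b\in B_1\). Using the hypothesised agreement of \(\cU_n\) and \(\cV_n\) twice — once with a fixed pair \(u,v\in B_1\) and once with a pair \(b_1,b_2\in B_\epsilon\) for arbitrarily small \(\epsilon\) — one rewrites both as equalities of \((N+3)\)-patches of \(\cQ\), exhibiting a return vector \((b_1-b_2)+(v-u)\in B_4\) of a patch containing all \(4\)-patches. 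Lemma \ref{lem:close repeat => period} then forces \((b_1-b_2)+(v-u)\in\cK_{\cQ}\); here is where Hausdorffness enters, via Lemma \ref{lem:HD=>periods closed}: \(\cK_{\cQ}\) is closed, so letting \(b_1-b_2\to\mathbf{0}\) gives \(v-u\in\cK_{\cQ}\). This yields \(\cU_n[\mathbf{0},N+3]=\cQ[b,N+3]=\cQ[b+(u-v),N+3]=\cV_n[\mathbf{0},N+3]\), contradicting \(\cU_n[\mathbf{0}]\neq\cV_n[\mathbf{0}]\). Without some substitute for this ``short return vectors of sufficiently full patches are periods'' step (and the closedness of the period group), your proposal does not reach a contradiction, so as it stands the converse direction is incomplete.
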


\begin{proof}
Let \(\Omega\) be well-separated and \(V \cpt E\) be as in Definition \ref{def:well-separated}. Take arbitrary \(\cU\), \(\cV \in \Omega\) with \(\cU \neq \cV\). Thus, \(\cU[x] \neq \cV[x]\) for some \(x \in E\). By the definition of being well-separated, there exists \(B \nhd E\) with \((\cU-x)[b,V] \neq (\cV-x)[b',V]\) for all \(b\), \(b' \in B\). Equivalently, \(\cU[b,V+x] \neq \cV[b',V+x]\) for all \(b\), \(b' \in B\). Thus, \(\cU\) and \(\cV\) are not \(e\)-close, for \(e = \Phi(B,V+x) \in \mathscr{U}'\). Since the uniformity separates \(\cU\) and \(\cV\), we have that \(\Omega\) is Hausdorff.

Conversely, suppose that \(\Omega\) is FLC (equivalently, compact) and Hausdorff. For a contradiction, suppose that \(\Omega\) is not well-separated. Equivalently, for arbitrarily large \(n \in \N\), we may find \(\cU_n\), \(\cV_n \in \Omega\) satisfying the following: \(\cU_n[b_1,n] = \cV_n[b_2,n]\) for arbitrarily small \(b_1\), \(b_2 \in E\), yet \(\cU_n[\mathbf{0}] \neq \cV_n[\mathbf{0}]\). Using compactness, take a subsequence for which \(\cU_n \to \cQ \in \Omega\).

Let \(N \in \N\) be such that, for all \(p \in \lang_{B_4}(\cQ)\), there exists some \(x \in B_N\) with \(p = \cQ[x,4]\). Such an \(N\) exists since \(\cQ\) is FLC, as easily follows from \(\Omega\) being FLC, see Proposition \ref{prop:FLC and repetitive inherited} later. There exists an index \(n \geq N+5\) for which
\begin{equation}\label{eq:HD=>ws1}
\cU_n[\mathbf{0},N+5] = \cQ[b,N+5] ,
\end{equation}
for some \(b \in E\), which we can make arbitrarily small by convergence \(\cU_n \to \cQ\), but in particular \(b \in B_1\). Moreover, by assumption, we have that
\begin{equation} \label{eq:HD=>ws2}
\cU_n[b_1,n] = \cV_n[b_2,n]
\end{equation}
for arbitrarily small \(b_1\) and \(b_2\), but also with \(\cU_n[\mathbf{0}] \neq \cV_n[\mathbf{0}]\). Fix some particular \(u = b_1\) and \(v = b_2\) in Equation \ref{eq:HD=>ws2}, with \(u\), \(v \in B_1\). Thus, \(\cV_n[\mathbf{0},n-1] = \cU_n[u-v,n-1]\). Since \(u-v \in B_2\) and \(n \geq N+5\) we have, first from Equation \ref{eq:HD=>ws1} and then the last equality,
\[
\cQ[b + (u-v),N+3] = \cU_n[u-v,N+3] = \cV_n[\mathbf{0},N+3] .
\]
Now, from Equation \ref{eq:HD=>ws2}, we have other \(b_1\), \(b_2 \in B_\epsilon\) for \(\epsilon > 0\) arbitrarily small (and also, say, \(\epsilon \leq 1\)) satisfying \(\cV_n[\mathbf{0},N+4] = \cU_n[b_1 - b_2,N+4]\) and thus, by the above,
\begin{equation}\label{eq:HD=>ws3}
\cQ[b + (u-v),N+3] = \cV_n[\mathbf{0},N+3] = \cU_n[b_1-b_2,N+3] = \cQ[b + (b_1-b_2),N+3] ,
\end{equation}
where the final equality comes from shifting Equation \ref{eq:HD=>ws1} by \(b_1-b_2 \in B_2\). Shifting Equation \ref{eq:HD=>ws3} by \(-(b+(u-v)) \in B_3\), we obtain \(\cQ[\mathbf{0},N] = \cQ[(b_1-b_2)-(u-v),N]\). Since \((b_1-b_2)-(u-v) \in B_4\), taking \(V = B_N\), we deduce that \((b_1-b_2)-(u-v) \in \cK_{\cQ}\) using Lemma \ref{lem:close repeat => period} (in short: we have a short return vector to a full patch). Since we could take \(b_1\), \(b_2\) arbitrarily small here, it also follows that \(u-v \in \cK_{\cQ}\), since the latter is topologically closed, by Lemma \ref{lem:HD=>periods closed}. But then
\[
\cU_n[\mathbf{0},N+3] = \cQ[b,N+3] = \cQ[b+(u-v),N+3] = \cV_n[\mathbf{0},N+3]
\]
where the first equality is from Equation \ref{eq:HD=>ws1}, the second from \(u-v \in \cK_{\cQ}\) and the third from the left-hand equality in Equation \ref{eq:HD=>ws3}. This contradicts \(\cU_n[\mathbf{0}] \neq \cV_n[\mathbf{0}]\), so \(\Omega\) is well-separated.
\end{proof}

Note that the above applies to individual patterns, as well as pattern spaces, by Lemma \ref{lem:separation for pattern spaces}. That is, if \(\cP\) is well-separated, then so is its hull, thus the hull is Hausdorff. Conversely, for \(\cP\) FLC, if \(\Omega_{\cP}\) is Hausdorff then \(\cP\) is well-separated. The assumption that \(\cP\) is FLC is needed in this implication, the reader is invited to try finding a counter-example to the claim if FLC is dropped.

The following shows that, for a compact pattern space, return discreteness is simply a matter of whether any elements have non-discrete groups of periods. In particular, if \(\cP\) is FLC and aperiodic (i.e., \(\Omega_{\cP}\) contains no periodic elements) then \(\cP\) is return discrete and thus \(\Omega_{\cP}\) is Hausdorff.

\begin{proposition}\label{prop:return discrete from discrete periods}
For a pattern space \(\Omega\), (1) implies (2) and, if \(\Omega\) is FLC, (2) implies (1):
\begin{enumerate}
	\item \(\Omega\) is return discrete;
	\item for all \(\cQ \in \Omega\), we have that \(\cK_{\cQ}\) is discrete.
\end{enumerate}
\end{proposition}

\begin{proof}
That (1) implies (2) is shown in Lemma \ref{lem:return discrete => well-separated} (in fact, the \(\cK_{\cQ}\) are `uniformly' discrete across all \(\cQ \in \Omega\)). For the converse, suppose that \(\Omega\) is FLC (equivalently, compact) but not return discrete. Thus, for each \(n \in \N\), we may find non-zero \(b_n \in B_{1/n}\) and \(\cP_n \in \Omega\) with
\begin{equation} \label{eq:rd<=> discrete-per 1}
\cP_n[\mathbf{0},n] = \cP_n[b_n,n] .
\end{equation}
Using compactness, take a convergent subsequence \(\cP_n \to \cQ \in \Omega\). We claim that \(\cK_{\cQ}\) is not discrete.

Indeed, take \(N \in \N\) so that, for all \(p \in \lang_{B_1}(\cQ)\), there exists some \(x \in B_N\) with \(p = \cQ[x,1]\). There exists such an \(N\), since \(\cQ\) is FLC (see Proposition \ref{prop:FLC and repetitive inherited} below). Since the subsequence \(\cP_n \to \cQ\), we have that
\begin{equation} \label{eq:rd<=> discrete-per 2}
\cP_n[b,N+1] = \cQ[\mathbf{0},N+1] 
\end{equation}
for some sufficiently large \(n \geq N+1\) and \(b \in B_1\). Shifting Equation \ref{eq:rd<=> discrete-per 1} by \(b \in B_1\) we have \(\cP_n[b,N] = \cP_n[b+b_n,N]\), since \(n \geq N+1\). Shifting Equation \ref{eq:rd<=> discrete-per 2} by \(b_n \in B_{1/n} \subseteq B_1\), we have \(\cP_n[b+b_n,N] = \cQ[b_n,N]\). Combining these, along with Equation \ref{eq:rd<=> discrete-per 2} restricted to \(B_N\),
\[
\cQ[\mathbf{0},N] = \cP_n[b,N] = \cP_n[b+b_n,N] = \cQ[b_n,N] .
\]
By Lemma \ref{lem:close repeat => period}, \(b_n \in \cK_{\cQ}\) since \(b_n \in B_1\) and \(\cQ\) contains centres of all \(1\)-patches. Since this holds for arbitrarily large \(n\), with \(b_n \neq \mathbf{0}\) and \(\|b_n\| \leq 1/n\), we have that \(\cK_{\cQ}\) is not discrete, as required.
\end{proof}

The following shows that the discussed properties are inherited by inclusion of pattern subspaces and membership of patterns, and similarly for local indistinguishability between patterns.

\begin{proposition}\label{prop:FLC and repetitive inherited}
Given an inclusion \(\Omega' \subseteq \Omega\) of pattern spaces, if \(\Omega\) is FLC, well-separated, return discrete or repetitive, then \(\Omega'\) also has this property. If \(\Omega\) is repetitive, then \(\Omega' = \Omega\).

Given patterns \(\cQ \LI \cP\), if \(\cP\) is FLC, well-separated, return discrete or repetitive then so is \(\cQ\). If \(\cP\) is repetitive, then \(\cP \LIs \cQ\).
\end{proposition}

\begin{proof}
Suppose that \(\Omega\) is FLC, equivalently compact by Corollary \ref{cor:cpt<=>FLC}. Since \(\Omega' \subseteq \Omega\) is closed (Lemma \ref{lem:LIin is closure inclusion}) it is compact, thus FLC again by Corollary \ref{cor:cpt<=>FLC}. That \(\Omega'\) is well-separated or return discrete, if \(\Omega\) has the same, follows trivially from the definitions. If \(\Omega\) is repetitive then, by (2) of Proposition \ref{prop:repetitive <=> minimal}, \(\lang(\Omega) = \lang(\cP)\) for all \(\cP \in \Omega\), from which it follows that \(\lang(\Omega) = \lang(\Omega')\). A pattern space is determined by its language, so \(\Omega = \Omega'\).

Given any \(\cP \in \Omega\), consider its hull \(\Omega_{\cP} \subseteq \Omega\). By the above, \(\Omega_{\cP}\) inherits any of the stated properties from \(\Omega\). Then \(\cP\) also has those properties, respectively, by Lemmas \ref{lem:pattern FLC <=> hull FLC}, \ref{lem:separation for pattern spaces} (twice) and \ref{lem:repetitive <=> hull repetitive}. Finally, if \(\cQ \LI \cP\) then \(\cQ \in \Omega_{\cQ} \subseteq \Omega_{\cP}\), so the corresponding statement for patterns follows from that for pattern spaces (although we note that it can alternatively be quite easily proved directly from definitions). When \(\cP\) is repetitive, \(\cP \LIs \cQ\) follows from Proposition \ref{prop:repetitive <=> minimal}.
\end{proof}

\subsection{Local derivations of pattern spaces}\label{sec:Local derivations of pattern spaces}
In this section we define LD maps between translational pattern spaces, and relate this to LDs of individual patterns, via induced LD maps of hulls. Recall the notion of a local derivation of individual patterns from Definition \ref{def:LD}.

\begin{definition}\label{def:LD map}
Let \(\Omega \subseteq A^E\) and \(\Omega' \subseteq B^E\) be pattern spaces and \(f \colon \Omega \to \Omega'\). We call \(f\) a \textbf{local derivation} (\textbf{LD}) \textbf{map}, and write \(f \colon \Omega \LDmap \Omega'\), if there exists \(c \geq 0\) (called a \textbf{derivation radius}) for which, for all \(\cP\), \(\cQ \in \Omega\) and \(x\), \(y \in E\), if \(\cP[x,c] = \cQ[y,c]\) then \((f\cP)[x] = (f\cQ)[y]\).
\end{definition}

Just as for Definition \ref{def:LD}, the above may be thought of as a sliding block code, but here as a factor map: it must be of the form \((f(\cP))[x] = f'(\cP[x,U])\), where \(f' \colon \lang_U(\Omega) \to B\) is some `coding' map, converting \(U\)-patches into labels for some sufficiently large `window' \(U \cpt E\).

The analogue of Lemma \ref{lem:increasing LD radius} is the following, which again is rather trivial but used enough to be worth stating:

\begin{lemma}\label{lem:increasing derivation}
Suppose that \(f \colon \Omega \LDmap \Omega'\), with derivation radius \(c\). Then, for all \(U \cpt E\), \(\cP\), \(\cQ \in \Omega\) and \(x\), \(y \in E\), if \(\cP[x,B_c+U] = \cQ[y,B_c+U]\) then \((f \cP)[x,U] = (f\cQ)[y,U]\). In particular, if \(\cP[x,r+c] = \cQ[y,r+c]\) then \((f\cP)[x,r] = (f\cQ)[y,r]\).

\end{lemma}

\begin{proof}
Suppose that \(\cP[x,B_c+U] = \cQ[y,B_c+U]\). For \(u \in U\), since \(B_c + u \subseteq B_c + U\), we have \(\cP[x+u,B_c] = \cQ[y+u,B_c]\) (Lemma \ref{lem:pattern properties}) and hence, by the LD property, \((f\cP)[x+u] = (f\cQ)[y+u]\). As this holds for all \(u \in U\), by definition \((f\cP)[x,U] = (f\cQ)[y,U]\), as required. The second claim follows immediately, taking \(U = B_r\).
\end{proof}

\begin{lemma}\label{lem:LD is factor map}
If \(f \colon \Omega \LDmap \Omega'\), then \(f\) is uniformly continuous, and a factor map, that is, \(f(\cP-x) = (f\cP)-x\) for all \(\cP \in \Omega\) and \(x \in E\).

Conversely, if \(f \colon \Omega \to \Omega'\) is a factor map, then \(f \colon \Omega \LDmap \Omega'\) is LD with derivation radius \(c\) if and only if, for all \(\cP\), \(\cQ \in \Omega\), if \(\cP[\mathbf{0},c] = \cQ[\mathbf{0},c]\) then \((f\cP)[\mathbf{0}] = (f\cQ)[\mathbf{0}]\).
\end{lemma}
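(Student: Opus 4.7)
The plan is to break the proof into three parts in the natural order: (i) show any LD map is a factor map, (ii) deduce uniform continuity, (iii) verify the simplification for factor maps.

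For (i), fix $\cP \in \Omega$, $x \in E$ and any $y \in E$. Using the shifting identity from Lemma \ref{lem:pattern properties} applied to the translation $\cP - x$, we have $(\cP-x)[y,c] = \cP[y+x,c]$ (that is, the same $c$-patch is displayed over $y$ in $\cP - x$ as over $y+x$ in $\cP$). The LD defining property then yields $[f(\cP-x)][y] = (f\cP)[y+x]$, and the right-hand side is by definition $[(f\cP)-x][y]$. Since $y$ was arbitrary, $f(\cP-x) = (f\cP)-x$, so $f$ is a factor map.

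For (ii), take any basic entourage $e' = \Phi(B,U) \in \mathscr{U}'$ on $\Omega'$. I claim $e \coloneqq \Phi(B, U+B_c) \in \mathscr{U}'$ on $\Omega$ witnesses uniform continuity. Indeed, if $(\cP,\cQ) \in e$ then there exist $x,y \in B$ with $\cP[x,U+B_c] = \cQ[y,U+B_c]$; by Lemma \ref{lem:increasing derivation} this implies $(f\cP)[x,U] = (f\cQ)[y,U]$, so $(f\cP,f\cQ) \in \Phi(B,U) = e'$. The only care required is the shape identity $U + B_c = B_c + U$, which I already use implicitly above.

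For (iii), one direction is trivial: if $f$ is LD with radius $c$ then setting $x=y=\mathbf{0}$ in Definition \ref{def:LD of hulls} gives the stated origin-only condition. For the converse, assume $f$ is a factor map satisfying the origin condition, and suppose $\cP[x,c] = \cQ[y,c]$. Shifting both sides via Lemma \ref{lem:pattern properties}, $(\cP-x)[\mathbf{0},c] = \cP[x,c] = \cQ[y,c] = (\cQ-y)[\mathbf{0},c]$. By assumption, $[f(\cP-x)][\mathbf{0}] = [f(\cQ-y)][\mathbf{0}]$. Using that $f$ is a factor map, this is $(f\cP - x)[\mathbf{0}] = (f\cQ - y)[\mathbf{0}]$, which unwinds to $(f\cP)[x] = (f\cQ)[y]$, as required.

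There is no real obstacle here: each step is essentially a direct application of either the shifting identity (Lemma \ref{lem:pattern properties}), Lemma \ref{lem:increasing derivation}, or unwinding the definition of translated patterns. The only mildly delicate point is to make sure the right ``shape enlargement'' is used in (ii) so that the ball $B$ controlling the translation amount is not itself enlarged, which would weaken the resulting entourage on $\Omega$; using $\Phi(B, U + B_c)$ rather than $\Phi(B + B_c, U)$ keeps the argument clean.
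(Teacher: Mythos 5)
Your proof is correct and follows essentially the same route as the paper: uniform continuity via the entourage $\Phi(B,U+B_c)$ together with Lemma \ref{lem:increasing derivation}, the factor map property via the shifting identity and the LD defining property, and the converse by translating to the origin and using the factor map property. The only cosmetic difference is that you verify $f(\cP-x)=(f\cP)-x$ pointwise in $y$ directly from the derivation radius $c$, whereas the paper compares $r$-patches at the origin for all $r\geq 0$; the idea is identical.
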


\begin{proof}
Given \(e = \Phi(B,U) \in \mathscr{U}'\), define \(e' = \Phi(B,U+B_c) \in \mathscr{U}'\). If \(\cP\), \(\cQ \in \Omega\) are \(e'\)-close then \(\cP[b,U+B_c] = \cQ[b',U+B_c]\) for \(b\), \(b' \in B\). By Lemma \ref{lem:increasing derivation}, applying the LD, \((f\cP)[b,U] = (f\cQ)[b',U]\), thus \(f\cP\) and \(f\cQ\) are \(e\)-close so \(f\) is uniformly continuous.

Given \(\cP \in \Omega\), \(x \in E\) and \(r \geq 0\), we have \(\cP[x,r+c] = (\cP-x)[\mathbf{0},r+c]\). By Lemma \ref{lem:increasing derivation}, \((f\cP)[x,r] = (f(\cP-x))[\mathbf{0},r]\), thus \(((f\cP)-x)[\mathbf{0},r] = (f(\cP-x))[\mathbf{0},r]\). Since this holds for all \(r \geq 0\), we have \((f\cP)-x = f(\cP-x)\), as required.

Now suppose that \(f \colon \Omega \to \Omega'\) is a factor map and take arbitrary \(\cP\), \(\cQ \in \Omega\) and \(x\), \(y \in E\) with \(\cP[x,c] = \cQ[y,c]\), equivalently, \((\cP-x)[\mathbf{0},c] = (\cQ-y)[\mathbf{0},c]\). Assuming the final stated property of the lemma then gives the middle equality below:
\[
(f\cP)[x] = (f(\cP)-x)[\mathbf{0}] = (f(\cP-x))[\mathbf{0}] = (f(\cQ-y))[\mathbf{0}] = (f(\cQ)-y)[\mathbf{0}] = (f\cQ)[y] ,
\]
as required. The converse direction, assuming \(f\) is LD, is similarly trivial, as it is just the restriction of the LD property of Definition \ref{def:LD map} to \(x=y=\mathbf{0}\).
\end{proof}

The following shows that taking as objects pattern spaces over a common Euclidean space \(E\), with LD maps between them as morphisms, defines a category (and is a counterpart to Lemma \ref{lem:LD preorder}, that LD defines a pre-order for individual patterns):

\begin{lemma}\label{lem:LD category}
For any pattern space \(\Omega\), the identity map \(\id \colon \Omega \LDmap \Omega\) is LD. Given \(f \colon \Omega \LDmap \Omega'\) and \(g \colon \Omega' \LDmap \Omega''\), we also have that \(g \circ f\) is LD.
\end{lemma}

\begin{proof}
If \(\cP[x,c] = \cQ[y,c]\) then \(\cP[x] = \cQ[y]\), so the identity map is trivially a local derivation (for any derivation radius \(c \geq 0\)). Suppose that \(f\) has derivation radius \(c_1\) and \(g\) has derivation radius \(c_2\). If \(\cP[x,c_1+c_2] = \cQ[y,c_1+c_2]\) then \((f\cP)[x,c_2] = (f\cQ)[y,c_2]\), by Lemma \ref{lem:increasing derivation}. Applying \(g\), we have \((gf\cP)[x] = (gf\cQ)[y]\), so \(g \circ f\) has derivation radius \(c_1+c_2\).
\end{proof}

LD maps of pattern spaces preserve the pre-order of local indistinguishability:

\begin{lemma}\label{lem:LD of LI}
If \(f \colon \Omega \LDmap \Omega'\) and \(\cU \LI \cV\) for \(\cP\), \(\cQ \in \Omega\), then \(f\cP \LI f\cQ\).
\end{lemma}

\begin{proof}
Let \(f\) have derivation radius \(c\) and take arbitrary \(x \in E\) and \(U \cpt E\). Then, since \(\cP \LI \cQ\), we have \(\cP[x,U+B_c] = \cQ[y,U+B_c]\) for some \(y \in E\). By Lemma \ref{lem:increasing derivation}, \((f\cP)[x,U] = (f\cQ)[y,U]\). So the arbitrary patch \((f\cP)[x,U]\) of \(f\cP\) occurs in \(f\cQ\), hence \(f\cP \LI f\cQ\).
\end{proof}

The above shows, in particular, that an LD map \(f \colon \Omega \LDmap \Omega'\) induces a map \(f \colon \mathrm{LI}(\Omega) \to \mathrm{LI}(\Omega')\) of the LI-classes contained in the pattern spaces. By definition, for any LD map \(f \colon \Omega \LDmap \Omega'\), we have that \(\cP \LD f(\cP)\), with the same derivation radius \(c\), for all \(\cP \in \Omega\). The below gives a kind of converse for hulls, defining an \textbf{induced LD map} from an LD of patterns:

\begin{proposition}\label{prop:induced LDs}
Let \(\cP \LD \cQ\) with derivation radius \(c\). Then there is a unique LD map of hulls \(f \colon \Omega_{\cP} \LDmap \Omega_{\cQ}\) (which also has derivation radius \(c\)) with \(f(\cP) = \cQ\). If \(\Omega_{\cQ}\) is Hausdorff, \(f\) is the unique factor map satisfying \(f(\cP) = \cQ\) and, if \(\cP\) is additionally FLC, then \(f \colon \Omega_{\cP} \to \Omega_{\cQ}\) is surjective.
\end{proposition}

\begin{proof}
We define \(f\) explicitly, as follows. Given \(\cU \in \Omega_{\cP}\), we let \(f(\cU) = \cU'\) be the pattern defined by \(\cU'[x] \coloneqq \cQ[z]\) where \(z = z_x \in E\) is any point for which \(\cP[z,c] = \cU[x,c]\). Since \(\cU \in \Omega_{\cP}\), equivalently \(\cU \LI \cP\), such a \(z \in E\) exists. Moreover, \(\cU'[x]\) does not depend on the choice of \(z\) for each \(x \in E\). Indeed, given a different choice \(z' \in E\) with \(\cP[z',c] = \cU[x,c] = \cP[z,c]\), since the derivation radius of \(\cP \LD \cQ\) is \(c\), we have \(\cQ[z'] = \cQ[z]\). Clearly, \(f(\cP) = \cQ\) (taking \(z_x = x\), above). Moreover, by construction, \(f\) satisfies the LD property since, by construction, if \(\cU[x,c] = \cV[y,c]\) then \((f\cU)[x] = (f\cV)[y]\), as both are equal to \(\cQ[z]\) for any \(z \in E\) with \(\cP[z,c] = \cU[x,c] = \cV[y,c]\). Thus, \(f \colon \Omega_{\cP} \LDmap \Omega_{\cQ}\), since it quickly follows from \(f(\cP) = \cQ\) and Lemma \ref{lem:increasing derivation} that \(f(\Omega_{\cP}) \subseteq \Omega_{\cQ}\).

To show that \(f\) is unique, let \(g \colon \Omega_{\cP} \LDmap \Omega_{\cQ}\) be another LD map, with derivation radius \(c'\), and with \(g(\cP) = \cQ\). For any \(\cU \in \Omega_{\cP}\) and \(r \geq 0\), let \(x \in E\) be such that \(\cU[\mathbf{0},r+C] = \cP[x,r+C]\), where \(C = \max\{c,c'\}\). Such an \(x \in E\) exists, since \(\cU \LI \cP\). By Lemma \ref{lem:increasing derivation} and applying both LD maps, \((f\cU)[\mathbf{0},r] = (f\cP)[x,r] = \cQ[x,r] = (g\cP)[x,r] = (g\cU)[\mathbf{0},r]\). Since \((f\cU)[\mathbf{0},r] = (g\cU)[\mathbf{0},r]\) for all \(r \geq 0\), we have \(f\cU = g\cU\), as required.

For any \(\cU \in \Omega_{\cP}\), we have \(\cU \in \overline{\cP+E}\) by Corollary \ref{cor:hulls are orbit closures}, so we may take a net (even a sequence, see comments below Lemma \ref{lem:uniformity well-defined}) \(\cP - x_n \to \cU\). By continuity and being a factor map, \(f(\cP - x_n) = \cQ - x_n \to f(\cU)\). Assuming \(\Omega_{\cQ}\) is Hausdorff, this limit point is unique, so \(f(\cU)\) is determined by the images \(f(\cP - x_n) = \cQ - x_n\) and thus \(f\) is uniquely defined over all of \(\Omega_{\cP}\) from just the factor map property and \(f(\cP) = \cQ\). If \(\cP\) is also FLC then \(\Omega_{\cP}\) is compact, by Corollary \ref{cor:cpt<=>FLC}, and thus so is \(f(\Omega_{\cP})\), by continuity. We have \(f(\Omega_{\cP}) \supseteq f(\cP+E) = \cQ+E\) is dense in \(\Omega_{\cQ} = \overline{\cQ+E}\) (Corollary \ref{cor:hulls are orbit closures}). Since compact subspaces of Hausdorff spaces are closed, we also have \(f(\Omega_{\cP})\) is closed, so \(f(\Omega_{\cP}) = \Omega_{\cQ}\), as required.
\end{proof}

\begin{remark}
We cannot drop \(\cP\) being FLC in general in deducing that the induced map is surjective. Indeed, consider the pattern \(\cP\) of the tiling of unit interval tiles, where each tile \([n,n+1]\), for \(n \in \Z\), is given unique label \(n\). Clearly, \(\Omega_{\cP} = \cP + E \cong E\) (indeed, this example is MLD to the `trivial' pattern of Example \ref{exp:trivial patterns} where all points are inequivalent). Let \(\cQ\) have the same geometric tiles but some arbitrary chosen labelling; for example, perhaps \(\cQ\) is defined by a Thue--Morse tiling. Then \(\cP \LD \cQ\) (the index \(n\) tile in \(\cQ\) may deduce its label from the label \(n\) in \(\cP\)), and \(\mathrm{im}(f) = \cQ + E\). So \(f \colon \Omega_{\cP} \LDmap \Omega_{\cQ}\) will only be surjective when \(\Omega_{\cQ}\) has a single orbit.
\end{remark}

Given an LD map between pattern spaces and an affine automorphism, we get an induced LD map of the distorted pattern spaces:

\begin{lemma}\label{lem:distorted LD}
Let \(f \colon \Omega \LDmap \Omega'\) and \(T \colon E \to E\) be an affine automorphism. Then the `distortion' \(f_T \coloneqq T \circ f \circ T^{-1} \colon T\Omega \LDmap T\Omega'\) of \(f\) is also an LD map. Here, \(T\) and \(T^{-1}\) also denote the maps between pattern spaces, see Proposition \ref{prop:pattern spaces are DS}.
\end{lemma}

\begin{proof}
Let \(f\) have derivation radius \(c\) and write \(T(x) = L(x)+z\) for \(L\) linear. Let \(c'\) be such that \(B_{c'} \supseteq L(B_c)\). We claim that \(f_T\) has derivation radius \(c'\).

Indeed, suppose that \((T\cP)[x,c'] = (T\cQ)[y,c']\), where \(T\cP\), \(T\cQ \in T\Omega\). Thus, \((T\cP)[x,LB_c] = (T\cQ)[y,LB_c]\). Equivalently, by Lemma \ref{lem:pattern properties}, \(\cP[T^{-1}x,B_c] = \cQ[T^{-1}y,B_c]\). Applying the local derivation, \((f\cP)[T^{-1}x] = (f\cQ)[T^{-1}y]\). Applying \(T\), we have \((Tf\cP)[x] = (Tf\cQ)[y]\), that is,
\[
(T \circ f \circ T^{-1}(T\cP))[x] = (T \circ f \circ T^{-1}(T\cQ))[y] .
\]
Since \(T\cP\), \(T\cQ \in T\Omega\) and \(x\), \(y \in E\) were arbitrary, we have that \(f_T \coloneqq TfT^{-1}\) is LD.
\end{proof}

The following shows that the operation of taking a pattern to its translational hull is functorial and commutes with applying affine transformations:

\begin{proposition}\label{prop:LD functor}
Taking induced LD maps of local derivations is functorial, in the following sense: the induced map of \(\cP \LD \cP\) is the identity map \(\mathrm{id} \colon \Omega_{\cP} \LDmap \Omega_{\cP}\), and if \(\cP \LD \cQ\) and \(\cQ \LD \cR\), with induced maps \(f\) and \(g\), respectively, then the induced map of \(\cP \LD \cR\) is \(g \circ f\). Thus, if \(\cP \LD \cQ\) and \(\cQ \LD \cP\), then their induced maps are homeomorphisms and inverses of each other.

Given \(\cP \LD \cQ\), with induced map \(f\), and an affine automorphism \(T \colon E \to E\), we have \(T\cP \LD T\cQ\) has induced map \(f_T\) making the diagram below commute
\[
\begin{codi}
\obj {
|(a)| \Omega_{\cP}  & |(b)| \Omega_{\cQ} \\[-1.5em]
|(c)| \Omega_{T\cP} & |(d)| \Omega_{T\cQ} \\
};
\mor a ["\mathrm{LD}"]["f",swap]  :-> b;
\mor c ["\mathrm{LD}"]["f_T",swap]:-> d;
\mor a T:-> c;
\mor b T:-> d;
\end{codi}
\]
\end{proposition}

\begin{proof}
We have that \(\mathrm{id} \colon \Omega_{\cP} \LDmap \Omega_{\cP}\) is LD with \(\mathrm{id}(\cP) = \cP\). By uniqueness (Proposition \ref{prop:induced LDs}), this is also the induced LD map of \(\cP \LD \cP\). Similarly, given \(\cP \LD \cQ\) and \(\cQ \LD \cR\) inducing \(f\) and \(g\), respectively, we have \((g \circ f)(\cP) = \cR\). Since \(g \circ f\) is an LD map (Lemma \ref{lem:LD category}), again by uniqueness \(g \circ f\) is the LD map induced by \(\cP \LD \cR\).

Now let \(\cP \LD \cQ\), with induced map \(f\), and \(\cQ \LD \cR\), with induced map \(g\). Then, by the above, \(g \circ f \colon \Omega_{\cP} \to \Omega_{\cP}\) is the induced map of \(\cP \LD \cP\), which also by the above is the identity map. Similarly, \(f \circ g = \mathrm{id}\) so \(f\) and \(g = f^{-1}\) are homeomorphisms.

Given \(\cP \LD \cQ\) and affine automorphism \(T\), we have (from Lemma \ref{lem:distorted LD}) that \(T\cP \LD T \cQ\) and \(f_T \coloneqq T \circ f \circ T^{-1}\) is LD and, by definition, the diagram commutes. We have \(f_T(T\cP) = (T \circ f \circ T^{-1})(T\cP) = T \circ f(\cP) = T\cQ\). Since there is a unique LD map \(g \colon \Omega_{T\cP} \LDmap \Omega_{T\cQ}\) with \(g(T\cP) = T\cQ\), we must have that \(f_T\) is the map induced by \(T\cP \LD T\cQ\), by Proposition \ref{prop:induced LDs}.
\end{proof}

If \(f\) is bijective, with \(f^{-1}\) also an LD map, then we say that \(\Omega\) and \(\Omega'\) are \textbf{MLD}. The result below states that, in the compact Hausdorff case, the LD property of the inverse is always automatic. Its proof is very simple in the return discrete setting but needs a bit more work in the full generality here. It is of particular relevance later, when we consider a notion of being substitutional using LD maps; when subdivision is bijective, this shows that its inverse (`composition') is also a local derivation map:

\begin{proposition} \label{prop:inverse of LD is LD}
If \(f \colon \Omega \LDmap \Omega'\) is bijective, \(\Omega\) is FLC and \(\Omega'\) is Hausdorff (equivalently, \(\Omega'\) is well-separated) then \(g = f^{-1}\) is also an LD map, so \(\Omega\) and \(\Omega'\) are MLD.
\end{proposition}

\begin{proof}
Since \(\Omega\) is FLC, it is compact. Thus, since \(f\) is bijective and continuous (Lemma \ref{lem:LD is factor map}), \(\Omega' = f(\Omega)\) is also compact, thus FLC. This verifies that \(\Omega'\) is Hausdorff if and only if it is well-separated, by Proposition \ref{prop:well-sep <=> HD}. In this case, \(f\) is a continuous map from a compact space to a Hausdorff space, so \(\Omega'\) is also compact Hausdorff and \(f\) has (uniformly) continuous inverse \(g\). We need to show that \(g\) is an LD map.

We claim that, for sufficiently large \(n\), we have the following: for all \(\cP \in \Omega\), if \((f\cP)[\mathbf{0},n] = (f\cP)[b,n]\) for some \(b \in B_1\) then \(\cP[\mathbf{0}] = \cP[b]\). Indeed, suppose otherwise, so that for arbitrarily large \(n \in \N\) we may find \(\cP_n \in \Omega\) and \(b_n \in B_1\) with \(\cQ_n[\mathbf{0},n] = \cQ_n[b_n,n]\) but \(\cP_n[\mathbf{0}] \neq \cP_n[b_n]\), where we denote \(\cQ_n \coloneqq f(\cP_n)\). Since \(\Omega\) and \(\Omega'\) are compact, we may find a subsequence of the above \(n \in \N\) for which \((\cP_n,\cQ_n) \to (\cP,\cQ) \in \Omega \times \Omega'\).

Let \(N \in \N\) be such that, for all \(p \in \lang_{B_1}(\cQ)\), there exists some \(x \in B_N\) with \(\cQ[x,1] = p\); such \(N\) exists since \(\cQ\) is FLC, by Proposition \ref{prop:FLC and repetitive inherited}. Take \(n \geq N+1\) in the above subsequence for which \(\cP_n[z,2] = \cP[\mathbf{0},2]\) and \(\cQ_n[z',N+1] = \cQ[\mathbf{0},N+1]\) for \(z\), \(z' \in B_1\). Such an \(n \in \N\) and \(z\), \(z' \in B_1\) exist by convergence of the subsequences to \(\cP\) and \(\cQ\). From \(\cQ_n[\mathbf{0},n] = \cQ_n[b,n]\), where we denote \(b \coloneqq b_n \in B_1\), and shifting by \(z' \in B_1\), we have \(\cQ_n[z',n-1] = \cQ_n[z'+b,n-1]\). Restricting this to radius \(N \leq n-1\) and combining with \(\cQ[b,N] = \cQ_n[z'+b,N]\) (obtained from shifting \(\cQ_n[z',N+1] = \cQ[\mathbf{0},N+1]\) by \(b \in B_1\)) and our definition of \(z'\),
\[
\cQ[b,N] = \cQ_n[z'+b,N] = \cQ_n[z',N] = \cQ[\mathbf{0},N] .
\]
It follows from Lemma \ref{lem:close repeat => period} that \(b \in \cK_{\cQ}\). We claim that \(b \in \cK_{\cP}\) too. Indeed, from continuity of \(f\) (and \(\Omega'\) being Hausdorff), \(f(\cP) = f \lim \cP_n = \lim f \cP_n = \lim \cQ_n = \cQ\), where the limit is over the subsequence. If \(b \notin \cK_{\cP}\) then \(\cP \neq \cP + b\) yet \(f(\cP + b) = f(\cP) + b = \cQ + b = \cQ\), contradicting injectivity of \(f\).

Since \(b \in \cK_{\cP}\) and \(b \in B_1\), from \(\cP_n[z,2] = \cP[\mathbf{0},2]\) we obtain:
\[
\cP_n[z + b,1] = \cP[b,1] = \cP[\mathbf{0},1] = \cP_n[z,1] .
\]
Shifting the above by \(-z \in B_1\), we find \(\cP_n[b,0] = \cP_n[\mathbf{0},0]\), contradicting our assumption that \(\cP_n[\mathbf{0}] \neq \cP_n[b]\). We have thus proved that, for some \(n \in \N\) (which we now fix), for all \(\cP \in \Omega\) and \(b \in B_1\), if \((f\cP)[\mathbf{0},n] = (f\cP)[b,n]\) then \(\cP[\mathbf{0}] = \cP[b]\).

Since \(f\) is a factor map then obviously \(g = f^{-1}\) is too. Since \(g\) is uniformly continuous, we may choose a sufficiently large \(M \geq n\) so that, for all \(\cQ_1\), \(\cQ_2 \in \Omega'\) and \(\cP_i \coloneqq g(\cQ_i)\), if \(\cQ_1[\mathbf{0},M] = \cQ_2[\mathbf{0},M]\) then \(\cP_1[b,n+c] = \cP_2[\mathbf{0},n+c]\) for some \(b \in B_1\), where \(n \in \N\) is as above. We claim that \(g\) has derivation radius \(M\). Indeed, assume that \(\cQ_1[\mathbf{0},M] = \cQ_2[\mathbf{0},M]\). Applying \(f\) (and using Lemma \ref{lem:increasing derivation}) to \(\cP_1[b,n+c] = \cP_2[\mathbf{0},n+c]\), we have \(\cQ_1[b,n] = \cQ_2[\mathbf{0},n] = \cQ_1[\mathbf{0},n]\), where the latter is given by restricting \(\cQ_1[\mathbf{0},M] = \cQ_2[\mathbf{0},M]\) to radius \(n \leq M\). By our choice of \(n \in \N\) justified above, \(\cP_1[\mathbf{0}] = \cP_1[b]\). Combining this with \(\cP_1[b,n+c] = \cP_2[\mathbf{0},n+c]\), it follows that \(\cP_1[\mathbf{0},0] = \cP_1[b,0] = \cP_2[\mathbf{0},0]\), so \(\cP_1[\mathbf{0}] = \cP_2[\mathbf{0}]\). Thus the LD property of Definition \ref{def:LD map} holds for \(x = y = \mathbf{0}\), so the full LD property holds, by Lemma \ref{lem:LD is factor map}.
\end{proof}

The above specialises to the below correspondence result, relating LDs (of individual patterns) being MLDs, to the induced map of hulls being injective:

\begin{corollary}\label{cor:MLD<=>injective}
Suppose that \(\cP \LD \cQ\). Denote its induced LD map by \(f \colon \Omega_{\cP} \LDmap \Omega_{\cQ}\). Then the following hold:
\begin{itemize}
	\item[(A)] if \(\cQ \LD \cP\) then its induced map is \(f^{-1} \colon \Omega_{\cQ} \LDmap \Omega_{\cP}\) with \(f\) necessarily bijective;
	\item[(B)] conversely, provided \(\cP\) is FLC, if \(\Omega_{\cQ}\) is Hausdorff (equivalently, \(\cQ\) is well-separated) and \(f\) is injective then \(\cQ \LD \cP\).
\end{itemize}
Hence, if \(\cP\) is FLC, \(\cQ\) is well-separated and \(\cP \LD \cQ\), the following are equivalent:
\begin{enumerate}
	\item the induced LD map \(f\) is injective;
	\item the induced LD map \(f\) is a homeomorphism;
	\item \(\cQ \LD \cP\), that is, \(\cP \MLD \cQ\).
\end{enumerate}
\end{corollary}

\begin{proof}
If \(\cQ \LD \cP\) then Proposition \ref{prop:LD functor} already shows that its induced LD map is the inverse of \(f\), establishing (A) and that (3) implies (1). For (B), suppose that \(\cP\) is FLC (equivalently, \(\Omega_{\cP}\) is FLC). As in the previous proof, \(\Omega_{\cQ}\) is Hausdorff if and only if it is well-separated (which in turn is equivalent to \(\cQ\) being well-separated by Lemma \ref{lem:separation for pattern spaces}). By Proposition \ref{prop:induced LDs}, the map \(f \colon \Omega_{\cP} \to \Omega_{\cQ}\) induced by \(\cP \LD \cQ\) is a surjective map from a compact space to a Hausdorff space, so is a homeomorphism if and only if it is injective, which also shows (1) implies (2). If it is injective, Proposition \ref{prop:inverse of LD is LD} applies and \(f^{-1}\) is an LD map. In particular, since \(f(\cP) = \cQ\), we have that \(\cQ \LD f^{-1}(\cQ) = \cP\), as required. This also establishes that (2) implies (3), as required.
\end{proof}

Note that \(\cP\) being FLC cannot be dropped in proving the second claim above:

\begin{example}
Consider the tiling of unit interval tiles in \(E = \R\), with origin over a boundary. We define a tiling, with associated pattern \(\cP\), by adding labels \(n \in \N\) to the tiles with support \([2^{n-1}-1,2^{n-1}]\), label \(1\) to all tiles with support \([-2^{n-1},-2^{n-1}+1]\) for \(n \in \N\) and otherwise tiles are labelled with a \(0\). Define another pattern \(\cQ\) similarly, except all tiles marked with \(n \neq 0\) are relabelled to \(1\). Then clearly \(\cP \LD \cQ\), with \(\cP\) and \(\cQ\) well-separated and \(\cQ\) FLC. Moreover, it is not hard to see that \(\Omega_{\cP} = (\cP + E) \cup (\mathcal{Z} + E) \cup (\mathcal{O} + E)\), where \(\mathcal{Z}\) is the pattern associated to the all \(0\)s tiling, and \(\mathcal{O}\) is a tiling with all \(0\) tiles except for a single \(1\) tile. We have an analogous description for \(\Omega_{\cQ} = (\cQ + E) \cup (\mathcal{Z} + E) \cup (\mathcal{O} + E)\) and \(f \colon \Omega_{\cP} \to \Omega_{\cQ}\) is bijective (and continuous). However, clearly we do not have \(\cQ \LD \cP\), since the tiles marked \(n \neq m\) are distinct in \(\cP\) but have the same patches of tiles to arbitrarily large distances in \(\cQ\), as \(n\), \(m \to \infty\). Alternatively, if \(\cQ \LD \cP\), then by Lemma \ref{prop:induced LDs} we would have a surjective and continuous map \(\Omega_{\cQ} \to \Omega_{\cP}\), which cannot exist since \(\Omega_{\cQ}\) is compact but \(\Omega_{\cP}\) is not.
\end{example}

\section{Substitutional pattern spaces}
\label{sec:Substitutional pattern spaces}

In this section we define a notion of a pattern being hierarchical with respect to a linear expansion and derive some important resulting properties. The definition aims to simultaneously cover standard definitions including stone and non-stone inflation tilings, substitution Delone (multi)sets \cite{LW03} and more. As noted in the introduction, the definition here stems from the collaboration \cite{HKW25} and is a slight relaxation of being pseudo self-affine \cite{Sol07} (or LIDS \cite{BG13}). We will also define a notion of a pattern \emph{space} being substitutional. Whilst this latter concept was initially motivated by the one for individual patterns, it is more economical to start with the pattern space version:

\subsection{Substitutional pattern spaces}

Throughout, \(L\) will denote a linear automorphism \(L \colon E \to E\). For our main results it will need to be expansive (see Definition \ref{def:expansive}) and our pattern spaces will need to be compact Hausdorff, although in this section we do not make these running assumptions.

\begin{definition}\label{def:L-sub pattern space}
Let \(\Omega\) be a pattern space and \(L \colon E \to E\) be a linear automorphism. We say that \(\Omega\) is \textbf{\(L\)-substitutional}, or \textbf{\(L\)-sub}, for short, when it is equipped with a surjective LD map \(S \colon L\Omega \LDmap \Omega\), called \textbf{subdivision}. We then define \textbf{substitution} to be the map \(\sub \coloneq S \circ L \colon \Omega \to \Omega\). If \(L\) is also expansive, we call \(\Omega\) \textbf{expansive \(L\)-sub}.
\end{definition}

Technically, being \(L\)-sub is not a property of \(\Omega\), it is the declaration that some subdivision map has been chosen for it, but we will only ever consider one such map so this should not cause confusion. The idea of this definition is simple: the map \(S\) should be thought of as like the tile replacement (or `subdivision') rule that replaces inflated (by \(L\)) tiles with tiles of the original size, see Figure \ref{fig:pen_sub}. There are several benefits to this broader concept, though; in particular it applies to general pattern spaces and not just ones defined by tilings, and also allows one to consider different substitution-invariant pattern spaces to just those defined by generating a language by iteration of substitution on an initial set of prototiles.

The substitution map \(\sigma = S \circ L \colon \Omega \to \Omega\), as usual in the geometric setting, is the composition `inflate, subdivide', where the homeomorphism \(L \colon \Omega \to L\Omega\) is given by \(\cP \mapsto L\cP\). In particular, surjectivity of \(S\) is equivalent to surjectivity of \(\sub\). This requirement is quite natural: it asks that all elements \(\cP \in \Omega\) are properly hierarchical, in that they may always be composed into `superpatterns' \(L\cP'\), where \(S(L\cP') = \cP\) and \(\cP' \in \Omega\) too. If we drop the surjectivity requirement, then later recognisability results will still at least apply to the subset of `hierarchical' elements of \(\Omega\), as we will see shortly.

\begin{lemma}\label{lem:sub preserves LI}
Let \(S \colon L\Omega \LDmap \Omega\), \(\sub = S \circ L\) and \(\cP\), \(\cQ \in \Omega\) with \(\cP \LI \cQ\). Then \(\sub(\cP) \LI \sub(\cQ)\). In particular, \(\sub\) induces a well-defined map on LI-classes.
\end{lemma}

\begin{proof}
We have \(\cP \LI \cQ\) if and only if \(L \cP \LI L \cQ\) (see Lemma \ref{lem:distortions of languages}). Thus, \(S(L\cP) \LI S(L\cQ)\), by Lemma \ref{lem:LD of LI}, as required.
\end{proof}

Note that, in the below, \(f^{-1}\) is automatically LD when the pattern spaces are compact Hausdorff, by Proposition \ref{prop:inverse of LD is LD}.

\begin{proposition}\label{prop:MLD invariance of L-sub}
Suppose that \(\Omega\) is an \(L\)-sub pattern space, with substitution \(\sub\), and that \(\Omega'\) is MLD to \(\Omega\), say with \(f \colon \Omega \LDmap \Omega'\) a bijective LD map, with LD inverse. Then \(\Omega'\) is also \(L\)-sub, and naturally so, in the sense that it has substitution \(\sub'\) satisfying \(f \circ \sub = \sub' \circ f\).
\end{proposition}

\begin{proof}
The LD map \(f\) defines also the bijective LD map \(f_L = L \circ f \circ L^{-1} \colon L\Omega \to L\Omega'\) and similarly \(f_L^{-1} = L \circ f^{-1} \circ L^{-1}\) is also an LD map. Since it is also surjective, and compositions of LD maps are LD (Lemma \ref{lem:LD category}), \(S' \coloneqq f \circ S \circ f_L^{-1} \colon L\Omega' \LDmap \Omega'\) is a surjective LD map, with substitution \(\sub' \coloneqq S' \circ L\), for which it is trivial to check that \(f \circ \sub = \sub' \circ f\).
\end{proof}

\begin{remark}
The above shows that being \(L\)-sub is `naturally' MLD invariant. One may wonder if this can be weakened to a one-directional LD version, that is, if given a substitution \(\sub\) on \(\Omega\) and surjective \(f \colon \Omega \LDmap \Omega'\), we may define a substitution \(\sub'\) on \(\Omega'\) for which \(f \circ \sub = \sub' \circ f\). The answer is no, in general, even for hulls of 1d tilings. Consider the constant length symbolic substitution on \(\{o,a,b,c\}\) given by \(o \mapsto aoa\), \(a \mapsto bbb\), \(b \mapsto ccc\) and \(c \mapsto aaa\). This has a fixed and palindromic word \(\cdots a^{27} c^9 b^3 aoa b^3 c^9 a^{27} \cdots\), with the single \(o\) at index \(0\). Also consider the word given by applying the recoding \(o\), \(a\), \(b \mapsto X\), \(c \mapsto Y\). This defines a surjective LD map \(f \colon \Omega \LDmap \Omega'\) between the suspensions of the orbit closures (which may be thought of as tiling spaces, of 1d tilings with unit interval labelled tiles). Note that \(\Omega\) contains periodic elements, corresponding to all \(a\)s, all \(b\)s and all \(c\)s tilings, which are mapped under \(f\) to the periodic all \(X\)s, all \(X\)s and all \(Y\)s tilings, respectively. Since substitution on \(\Omega\) permutes the three periodic orbits, there is no compatible substitution on \(\Omega'\) that intertwines \(f\).

We are not aware of any minimal examples exhibiting this sort of behaviour and it is possible there are none; in this direction, in the symbolic setting, it is at least known that Cantor factors of substitution subshifts are substitution subshifts or odometers with constant base, see \cite{Dur00}.
\end{remark}

\begin{definition}\label{def:hierarchy}
Let \(\Omega\) be a pattern space and \(S \colon L\Omega \LDmap \Omega\), possibly not surjective, with associated substitution \(\sub = S \circ L\). For \(\cP \in \Omega\), a \textbf{hierarchy} for \(\cP\) is a sequence \((\cP_n)_{n \in \N_0}\) in \(\Omega\) with each \(\sub(\cP_{n+1}) = \cP_n\) and \(\cP_0 = \cP\). When such a hierarchy exists, we call \(\cP\) \textbf{hierarchical}. We let \(\sH = \sH(\Omega)\) denote the subset of hierarchical elements of \(\Omega\).
\end{definition}

Of course, when \(\Omega\) is \(L\)-sub, every element has a hierarchy (by iteratively choosing pre-images), so \(\sH = \Omega\). More generally, in the compact Hausdorff case, we can at least always restrict a non-surjective substitution map to a surjective one on the hierarchical elements:

\begin{proposition}\label{prop:restriction to hierarchical elements}
Let \(\Omega\), \(S\) and \(\sub\) be as in Definition \ref{def:hierarchy}. Then \(\sub\) restricts to a surjective map \(\sub \colon \sH \to \sH\). Define the eventual range
\[
\mathscr{E} = \mathscr{E}(\sub) \coloneqq \bigcap_{n=0}^\infty \sub^n(\Omega) .
\]
Then \(\sH \subseteq \mathscr{E}\). If \(\Omega\) is compact Hausdorff, then \(\sH = \mathscr{E} \neq \emptyset\). Thus, in this case, subdivision restricts to a surjective map \(S \colon L \mathscr{H} \to \mathscr{H}\), making \(\mathscr{H} \subseteq \Omega\) a sub-pattern space that is compact Hausdorff and \(L\)-sub.
\end{proposition}

\begin{proof}
Let \(\cP \in \sH\) and choose some hierarchy \((\cP_i)_i\) for it. Then \(\sub \cP \in \sH\) too, since it has hierarchy \((\cP_{i-1})_i\), where we let \(\cP_{-1} = \sub(\cP)\), so \(\sub \colon \sH \to \sH\). This map is surjective, since \(\cP = \sub(\cP_1)\), where \(\cP_1 \in \sH\) as it has hierarchy \((\cP_{i+1})_i\). Moreover, since \(\cP = \sub^n(\cP_n) \in \sub^n(\Omega)\) for all \(n \in \N\), we have \(\sH \subseteq \mathscr{E}\).

So now suppose that \(\Omega\) is compact Hausdorff. We wish to show that \(\mathscr{H} \supseteq \mathscr{E}\), which follows from a standard inverse limit type argument. Note that each \(\sub^n(\Omega) \neq \emptyset\) is compact in a Hausdorff space, and is thus closed, so \(\mathscr{E} \subseteq \Omega\) is closed and hence also compact Hausdorff (and also non-empty, by the Cantor Intersection Theorem). So the same is true of \(X \coloneqq \mathscr{E}^{\N_0}\). Let \(\pi_n \colon X \to \mathscr{E}\) be the projection \(\pi_n((\cP_i)) \coloneqq \cP_n\) and define the continuous self-map \(\psi \colon X \to X\) by \(\psi(\cP_i)_i \coloneqq (\sub \cP_{i+1})_i\) (trivially, \(\sub(\mathscr{E}) \subseteq \mathscr{E}\), so this is well-defined). Let
\[
X_N \coloneqq \{(\cP_i)_i \in X \mid \sub(\cP_{i+1}) = \cP_i \text{ for all } 0 \leq i \leq N \} .
\]
Then each \(X_N \subseteq X\) is closed, since it is the intersection of pre-images of the (closed) diagonal \(\Delta \subseteq \mathscr{E} \times \mathscr{E}\) under the continuous maps \(x \mapsto (\pi_i (x), \pi_i(\psi x))\), for \(i = 0\), \ldots, \(N\). By definition, they are also nested, that is, \(X_N \supseteq X_{N+1}\). Given \(\cP \in \mathscr{E}\), if we further restrict each to \(X_N' = \{(\cP_i)_i \in X_N \mid \cP_0 = \cP\} = \pi_0^{-1}(\{\cP\}) \cap X_N \neq \emptyset\), then we get nested closed subsets \(X_N'\) where each \((\cP_i)_i\) has \(\cP_0 = \cP\). By the Cantor Intersection Theorem, there exists some \((\cP_i)_i \in \bigcap_{N=0}^\infty X_N'\). Thus, for each \(i \in \N_0\) we have \(\sub(\cP_{i+1}) = \cP_i\), so this defines a hierarchy for \(\cP\) and \(\cP \in \sH\), as required. Thus, \(\sH = \mathscr{E}\) and since we have already shown that \(\mathscr{E}\) is compact Hausdorff, and \(\sub \colon \sH \to \sH\) is surjective, we have that \(\sH\) is compact Hausdorff and \(L\)-sub, with subdivision \(S\) (restricted to \(\sH\)).
\end{proof}

We now restrict attention to \(L\)-sub pattern spaces i.e., with \(\sub\) surjective, where certain results for non-surjective substitutions will at least hold upon restriction to the hierarchical elements. Naturally, powers of substitution maps are still substitutions (in the sense defined in Definition \ref{def:L-sub pattern space}):

\begin{lemma}
If \(\Omega\) is \(L\)-sub then, for all \(n \in \N\), it is also \(L^n\)-sub with subdivision \(S' \coloneqq S_0 \circ S_1 \circ \cdots \circ S_n\), where \(S_i = L^i \circ S \circ L^{-i}\) is the distortion of subdivision \(S \colon L\Omega \LDmap \Omega\) to \(S_i \colon L^{i+1}\Omega \LDmap L^i\Omega\), as in Lemma \ref{lem:distorted LD}. Moreover, \(S' \circ L^n = \sub^n\).
\end{lemma}

\begin{proof}
The distorted subdivision maps \(S_i\) are LD maps, by Lemma \ref{lem:distorted LD}. Their composition is also LD (Lemma \ref{lem:LD category}). From \(S_i = L^i \circ S \circ L^{-i}\), each \(S_i\) is surjective because \(S\) is (and recall that \(L \colon \Omega \to L\Omega\) is a homeomorphism). Thus, \(S'\) must also be surjective. Moreover, iteratively using the identity \(S_i \circ L = L \circ S_{i-1}\), we have \(\sub^n = (S \circ L)^n = (S_0 \circ L)^n = S' \circ L^n\). Thus, \(\Omega\) is \(L^n\)-sub, with subdivision \(S'\) and substitution \(\sub^n\).
\end{proof}

\subsection{Languages for expansive \(L\)-sub pattern spaces}
\label{Languages for expansive L-sub pattern spaces}

When \(L\) is expansive, we fix a norm of \(E\) and \(\lambda > 1\) with \(\|L(x)\| \geq \lambda \|x\|\); the choice of norm, otherwise, does not affect results. It will be important to control agreement of patches fairly tightly, which may be pushed to larger and larger regions by substitution in the expansive case. A minor annoyance here is that, due to the derivation radius \(c\), some information near the boundary of a region is lost under each application. This can be addressed quite easily, using expansivity. Here, we choose to slightly modify our `level-\(n\) ball', using the following simple lemma. Throughout, given \(X \subseteq E\) and \(t \geq 0\), we let \(X_{+t}\) denote the set of points within \(t\) of \(X\), and \(X_{-t}\) the set of points with \(t\)-neighbourhoods wholly in \(X\). That is, \(X_{+t} = \{y \in E \mid \|y-x\| \leq t \text{ for some } x \in X\} = X + B_t\) and \(X_{-t} = \{x \in E \mid B(x,t) \subseteq X\}\). Trivially, \(X_{-t} \supseteq Y\) if and only if \(X \supseteq Y_{+t}\).

\begin{lemma}\label{lem:modified balls}
There exists \(\kappa \geq 0\) for which, for all \(X \subseteq E\), we have \((L(X_{+\kappa}))_{-c} \supseteq (L X)_{+\kappa}\).
\end{lemma}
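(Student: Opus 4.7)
The plan is to rewrite the inclusion in a more tractable form and exploit the expansivity of \(L\) to produce an explicit admissible \(\kappa\). First I would use the elementary adjointness \(X_{-t} \supseteq Y \iff X \supseteq Y_{+t}\) (noted just before the statement) to reduce the desired inclusion \((L(X_{+\kappa}))_{-c} \supseteq (LX)_{+\kappa}\) to the equivalent statement
\[
L(X_{+\kappa}) \supseteq (LX)_{+(\kappa+c)}.
\]
Since \(X_{+\kappa} = X + B_\kappa\) and \(L\) is linear, the left-hand side equals \(LX + L(B_\kappa)\). So it will be enough to find \(\kappa\) with \(L(B_\kappa) \supseteq B_{\kappa+c}\).

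Next I would feed in the expansivity hypothesis. The chosen norm on \(E\) satisfies \(\|L(x)\| \geq \lambda \|x\|\) with \(\lambda > 1\); equivalently, \(\|L^{-1}(y)\| \leq \lambda^{-1}\|y\|\) for all \(y \in E\). Thus \(L^{-1}(B_{\lambda\kappa}) \subseteq B_\kappa\), and applying \(L\) to both sides gives
\[
L(B_\kappa) \supseteq B_{\lambda\kappa}.
\]
Combining with the previous step,
\[
L(X_{+\kappa}) = LX + L(B_\kappa) \supseteq LX + B_{\lambda\kappa} = (LX)_{+\lambda\kappa}.
\]

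Finally, to absorb the derivation-radius loss it suffices to arrange \(\lambda \kappa \geq \kappa + c\), that is \(\kappa \geq c/(\lambda-1)\); any such choice (e.g.\ \(\kappa \coloneqq c/(\lambda-1)\)) works uniformly in \(X\). There is no genuine obstacle here: the only mildly non-trivial step is verifying \(L(B_\kappa) \supseteq B_{\lambda\kappa}\) from the norm estimate on \(L^{-1}\), and this is a one-line consequence of expansivity together with the bound on \(\|L^{-1}\|\) in the chosen norm.
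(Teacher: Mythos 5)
Your proposal is correct and follows essentially the same route as the paper: both reduce, via the adjointness \(X_{-t}\supseteq Y\iff X\supseteq Y_{+t}\), to showing \(L(B_\kappa)\supseteq B_{\kappa+c}\), obtain \(L(B_\kappa)\supseteq B_{\lambda\kappa}\) from expansivity, and take \(\kappa\geq c/(\lambda-1)\). Your extra remark justifying \(L(B_\kappa)\supseteq B_{\lambda\kappa}\) via the bound on \(L^{-1}\) is a fine (slightly more explicit) version of the paper's one-line appeal to \(LB\supseteq\lambda B\).
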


\begin{proof}
Let \(B \coloneqq B_1\). The lemma holds if \(\kappa LB \supseteq (\kappa + c)B\). Indeed, in this case,
\[
L(X_{+\kappa}) = L(X+\kappa B) = LX + \kappa LB \supseteq LX + (\kappa + c)B = (LX + \kappa B) + cB = ((LX)_{+\kappa})_{+c} ,
\]
as required. To show \(\kappa LB \supseteq (\kappa + c)B\) for some \(\kappa \geq 0\), recall that \(LB \supseteq \lambda B\) where \(\lambda > 1\) (by expansivity), so \(\kappa LB \supseteq \kappa \lambda B \supseteq (\kappa + c)B\) provided \(\kappa \lambda \geq \kappa + c\), which holds for \(\kappa \geq \frac{c}{\lambda - 1} \geq 0\).
\end{proof}

\begin{notation}\label{not:balls}
When an expansive \(L\) and subdivision map \(S\) are understood, we denote the derivation radius of \(S\) by \(c\) and let \(\kappa \geq 0\) be as in Lemma \ref{lem:modified balls}. We then define \(V^k \coloneqq (L^k B)_{+\kappa}\), where \(B = B_1\). Thus, \((L(V^k))_{-c} \supseteq V^{k+1}\) since, applying the lemma to \(X = L^kB\),
\[
(L(V^k))_{-c} = (L(X_{+\kappa}))_{-c} \supseteq (LX)_{+\kappa} = (L^{k+1} B)_{+\kappa} = V^{k+1} .
\]
If \(c = 0\) (for instance, when \(S\) corresponds to the subdivision map of a tile stone inflation) we may take \(\kappa = 0\) and then \(V^k = L^k B\). Generally, we just add a small `fringe' to these inflated balls.
\end{notation}

The following, using expansivity of \(L\), explicitly details the obvious fact that substitution increases the size of agreement between (sufficiently large) patches. In fact, it still makes sense if `substitution' is from a different pattern space:

\begin{lemma}\label{lem:bigger agreement after substitution}
Let \(\Omega\), \(\Omega'\) be pattern spaces and \(S \colon L\Omega' \LDmap \Omega\). As usual, define \(\sub \coloneqq S \circ L \colon \Omega' \to \Omega\). Then, for all \(\cP\), \(\cQ \in \Omega'\), we have that
\[
\mathrm{if} \ \cP[x,U] = \cQ[y,U] \ \mathrm{ then } \ (\sub \cP)[Lx,(LU)_{-c}] = \newline (\sub \cQ)[Ly,(LU)_{-c}] .
\]
In particular, when \(L\) is expansive, for all \(r \geq c/\lambda\) we have
\[
\mathrm{if} \ \cP[x,r] = \cQ[y,r] \ \mathrm{ then } \ (\sub \cP)[Lx,\lambda r - c] = (\sub \cQ)[Ly,\lambda r - c] 
\]
and, for all \(k \in \N_0\),
\[
\mathrm{if} \  \cP[x,V^k] = \cQ[y,V^k] \ \mathrm{ then } \ (\sub \cP)[Lx,V^{k+1}] = (\sub \cQ)[Ly,V^{k+1}] .
\]
\end{lemma}

\begin{proof}
By Lemma \ref{lem:pattern properties} (2), given \(\cP[x,U] = \cQ[y,U]\) we have that \((L\cP)[Lx,LU] = (L\cQ)[Ly,LU]\). Given \(z \in (LU)_{-c}\), we have \(B(z,c) \subseteq LU\) and thus \((L\cP)[Lx,B(z,c)] = (L\cQ)[Ly,B(z,c)]\), by Lemma \ref{lem:pattern properties} (1), and by (2) we have \((L \cP)[Lx + z,B_c] = (L \cQ)[Ly + z,B_c]\). Applying \(S\), we thus have \((S(L \cP))[Lx+z] = (S(L \cQ))[Ly+z]\), that is, \((\sub \cP)[Lx+z] = (\sub \cQ)[Ly+z]\). Since this holds for all \(z \in (LU)_{-c}\), we have \((\sub \cP)[Lx,(LU)_{-c}] = (\sub \cQ)[Ly,(LU)_{-c}]\), as required.

Applying the above to \(U = B_r\) derives the second claim, using that \((LB_r)_{-c} \supseteq (\lambda B_r)_{-c} = (B_{\lambda r})_{-c} = B_{\lambda r - c}\) for \(\lambda r - c \geq 0\) i.e., \(r \geq c/\lambda\). The third claim follows from applying the first to \(U = V^k\) since \(L(V^k)_{-c} \supseteq V^{k+1}\), see Notation \ref{not:balls}.
\end{proof}

We already saw, in Lemma \ref{lem:sub preserves LI}, that substitution preserves LI-classes. To get finer control on the effect of substitution on languages, we introduce the following notation:

\begin{notation}\label{not:language levels}
When \(L\) is expansive, for \(n \in \N_0\) we denote \(\lang_n(\Omega) = \lang_{V^n}(\Omega)\) and similarly \(\lang_n(\cP) = \lang_{V^n}(\cP)\) (see Notation \ref{not:balls}). Recalling \(\mathscr{C}_U(\Omega) = \{\lang_U(\cP) \mid \cP \in \Omega\}\) from Lemma \ref{lem:FLC => finitely many configs}, we similarly denote \(\mathscr{C}_n(\Omega) \coloneqq \mathscr{C}_{V^n}(\Omega)\).
\end{notation}

The next result follows quickly from Lemma \ref{lem:bigger agreement after substitution}.

\begin{lemma}\label{lem:induced substitution on patches}
Let \(\Omega\), \(\Omega'\), \(S\), \(L\) and \(\sub\) be as in Lemma \ref{lem:bigger agreement after substitution}, with \(L\) expansive. Given \(p \in \lang_n(\Omega')\), define \(\sub p \in \lang_{n+1}(\Omega)\) by \(\sub p \coloneqq (\sub \cP)[Lx,V^{n+1}]\), where we take any \(\cP \in \Omega'\) and \(x \in E\) with \(p = \cP[x,V^n]\). Then this is a well-defined function \(\sub \colon \lang_n(\Omega') \to \lang_{n+1}(\Omega)\).

Similarly, defining \(\sub(\lang_n(\cP)) \coloneqq \lang_{n+1}(\sub \cP)\) induces a well-defined function \(\sub \colon \mathscr{C}_n(\Omega') \to \mathscr{C}_{n+1}(\Omega)\). If \(\sub\) is surjective, then both \(\sub \colon \lang_n(\Omega') \to \lang_{n+1}(\Omega)\) and \(\sub \colon \mathscr{C}_n(\Omega') \to \mathscr{C}_{n+1}(\Omega)\) are surjective.
\end{lemma}

\begin{proof}
Suppose that \(p \in \lang_n(\Omega')\) and \(p = \cP[x,V^n] = \cQ[y,V^n]\). By Lemma \ref{lem:bigger agreement after substitution}, \((\sub \cP)[Lx,V^{n+1}] = (\sub \cQ)[Ly,V^{n+1}]\) so \(\sub \colon \lang_n(\Omega') \to \lang_{n+1}(\Omega)\) is well-defined. Moreover, given any \(C = \lang_n(\cP) \in \mathscr{C}_n(\Omega)\), this shows that \(\sub(C) = \{\sub p \mid p \in \lang_n(\cP)\}\) does not depend on the representative \(\cP\) we take for \(C\). We also have \(\sub(\lang_n(\cP)) = \lang_{n+1}(\sub \cP) \in \mathscr{C}_{n+1}(\Omega)\), since given any \(p' \in \lang_{n+1}(\sub \cP)\), say \(p' = (\sub \cP)[y,V_{n+1}]\), we have \(p' = \sub p\) with \(p = \cP[L^{-1} y, V^n]\). So \(\sub \colon \mathscr{C}_n(\Omega') \to \mathscr{C}_{n+1}(\Omega)\) is well-defined.

Supposing that \(\sub\) is surjective, given any \(p = \cP[x,V^{n+1}] \in \lang_{n+1}(\Omega)\), we have that \(p = \sub p'\) for any \(p' = \cP'[L^{-1}x,V^n]\), where \(\cP' \in \sub^{-1}(\Omega')\), showing that each \(\sub \colon \mathscr{C}_n(\Omega') \to \mathscr{C}_{n+1}(\Omega)\) is surjective. Similarly, given \(C = \lang_{n+1}(\cP) \in \mathscr{C}_{n+1}(\Omega)\), by definition \(C = \sub C'\) for \(C' = \lang_n(\cP')\), showing surjectivity from \(\mathscr{C}_n(\Omega')\).
\end{proof}

Crucially, recall that the sets \(\mathscr{C}_n(\Omega)\) are finite when \(\Omega\) is FLC (Lemma \ref{lem:FLC => finitely many configs}), which we will soon exploit. By restriction of patches, there are natural, surjective maps \(\pi \colon \lang_{n+1}(\Omega) \to \lang_n(\Omega)\). That is, given \(p = \cP[x,V^{n+1}] \in \lang_{n+1}(\Omega)\), we let \(\pi p \coloneqq \cP[x,V^n]\), which is clearly well-defined. Similarly, by restriction of all patches at some level of a language, we obtain surjections \(\pi \colon \mathscr{C}_{n+1}(\Omega) \to \mathscr{C}_n(\Omega)\).

Thus, when \(\Omega\) is expansive \(L\)-sub (so we take \(\Omega' = \Omega\) in the above), we obtain surjective self-maps \(\pi \circ \sub \colon \lang_n(\Omega) \to \lang_n(\Omega)\) and \(\pi \circ \sub \colon \mathscr{C}_n(\Omega) \to \mathscr{C}_n(\Omega)\). Continuing a slight abuse of notation, we also denote these by \(\sub\). Unpacking some notation here, given \(C = \lang_n(\cP) \in \mathscr{C}_n(\Omega)\), its substitution is simply \(\sub(C) = \lang_n(\sub \cP)\).

\begin{theorem}\label{thm:finite languages of hierarchical}
Let \(\Omega\) be compact and expansive \(L\)-sub. Then the set \(\{\lang(\cP) \mid \cP \in \Omega\}\) of LI-classes of \(\Omega\) is finite.
\end{theorem}

\begin{proof}
Let \(\mathscr{C} \coloneqq \mathscr{C}_0(\Omega)\). As explained above, substitution induces a surjection \(\sub \colon \mathscr{C} \to \mathscr{C}\). It is therefore bijective, since \(\mathscr{C}\) is finite by compactness (equivalently, FLC) of \(\Omega\) and Lemma \ref{lem:FLC => finitely many configs}, so \(\sub^N = \id\) on \(\mathscr{C}\) for some \(N \in \N\).

Let \(\cP \in \Omega\), say with hierarchy \((\cP_n)_n\). We claim that \(\lang(\cP)\) is determined by \(\lang_0(\cP) \in \mathscr{C}\), in which case the result follows from \(\# \mathscr{C} < \infty\). Indeed, suppose that \(\lang_0(\cP) = \lang_0(\cQ)\) for some other \(\cQ \in \Omega\), say with hierarchy \((\cQ_n)_n\). Take an arbitrary \(p \in \lang(\cP)\), say \(p = \cP[x,U]\). Let \(\ell \in \N\) be such that \(V^{\ell N} \supseteq U\). Since \(\sub^{\ell N}(\cP_{\ell N}) = \cP\) (and similarly for \(\cQ\)) and \(\sub^N = \id\) on \(\mathscr{C}\), we have that \(\lang_0(\cP_{\ell N}) = \lang_0(\sub^{\ell N} \cP) = \sub^{\ell N} \lang_0(\cP) = \lang_0(\cP) = \lang_0(\cQ) = \sub^{\ell N}\lang_0(\cQ) = \lang_0(\sub^{\ell N} \cQ) = \lang_0(\cQ_{\ell N})\). Thus, there exists some \(y \in E\) with \(\cQ_{\ell N}[y,V^0] = \cP_{\ell N}[L^{-\ell N} x,V^0]\). Substituting \(\ell N\) times (for instance, with Lemma \ref{lem:bigger agreement after substitution}) gives \(\cQ[L^{\ell N} y,V^{\ell N}] = \cP[x,V^{\ell N}]\). Restricting these patches to \(U\), we have \(\cQ[L^{\ell N} y,U] = p\), so that \(p \in \lang(\cQ)\). Thus, \(\lang(\cP) \subseteq \lang(\cQ)\), with an identical argument for the reserve.
\end{proof}

The following shows the importance of surjectivity of subdivision in the above result that there are always finitely many LI-classes, and the one after shows why surjectivity of substitution is generally needed for recognisability:

\begin{example}
Consider the pattern space \(\Omega\) of tilings of \(E = \R^1\) by unit interval tiles, labelled \(a\) and \(b\), where the number of \(b\) tiles is either \(0\), \(2^k\) for \(k \in \N_0\), or \(\infty\), with all such appearing in a connected block. So, symbolically, the tilings are of one of the types \(b^\infty a^\infty\), \(a^\infty b^n a^\infty\) (for \(n = 2^k\), with \(k \in \N_0\)), \(a^\infty b^\infty\) or \(b^\infty\). This defines a pattern space according to Definition \ref{def:hull}. These types may also be identified with the LI-classes. In particular, there are infinitely many LI-classes; tilings with a different number of \(b\) tiles are not LI.

Take the inflation \(L(x) = 2x\) and define \(S \colon L\Omega \LDmap \Omega\), in the obvious way, from the symbolic substitution \(a \mapsto aa\), \(b \mapsto bb\). All tilings of \(\Omega\) have pre-images, except those with a single \(b\) tile, so \(S\) (only just) fails to be surjective. Indeed, substitution maps LI-classes according to the map \(0 \mapsto 0\), \(1 \mapsto 2 \mapsto 4 \mapsto 8 \mapsto \cdots\), \(\text{left} \mapsto \text{left}\), \(\text{right} \mapsto \text{right}\) and \(\text{all} \mapsto \text{all}\). The hierarchical elements \(\sH \subset \Omega\) are given by all tilings of types \(a^\infty\), \(b^\infty\), \(a^\infty b^\infty\) or \(b^\infty a^\infty\), on which substitution is now surjective.
\end{example}

\begin{example}
Let \(\Omega\), similarly to above, be defined from symbolic sequences, this time over alphabet \(\{a,b,c\}\). Consider all such tilings associated to symbolic sequences of the form \(a^\infty\), \(a^\infty b a^\infty\) and \(a^\infty c a^\infty\) (i.e., with all \(a\) tiles except perhaps one \(b\) or one \(c\) tile). Let \(L(x) = 2x\) and consider the LD map \(S \colon L\Omega \LDmap \Omega\) defined from the symbolic substitution \(a \mapsto aa\), \(b \mapsto ba\) and \(c \mapsto ba\). Tilings of the form \(a^\infty b a^\infty\), whilst non-periodic, have two pre-images so recognisability fails. Of course, tilings of the form \(a^\infty c a^\infty\) have no pre-images and \(S\) is not surjective. If one restricts to the hierarchical elements \(\sH \subset \Omega\), which are precisely those of the form \(a^\infty\) and \(a^\infty b a^\infty\), then we obtain an \(L\)-sub pattern space (that is, substitution becomes surjective) and the non-periodic points now have unique pre-images.
\end{example}

\begin{corollary}\label{cor:power fixing LI-classes}
If \(\Omega\) is compact and expansive \(L\)-sub, \(\sub\) is bijective on the set of LI-classes, which is finite. In particular, there exists some \(N \in \N\) so that \(\sub^N\) fixes LI-classes.
\end{corollary}

\begin{proof}
Since \(\sub \colon \Omega \to \Omega\) is surjective, so must be the associated map on LI-classes. Since the set of LI classes is finite by the previous theorem, \(\sub\) is must also be injective on this finite set. Thus, there exists \(N \in \N\) so that \(\sub^N\) acts as the identity on LI-classes.
\end{proof}

The following will be useful when considering hulls in the next section.

\begin{lemma}\label{lem:expansive on sub-PS surjective=>equal PS}
Let \(\Omega\), \(\Omega'\), \(S\), \(L\) and \(\sub\) be as in Lemma \ref{lem:bigger agreement after substitution}. Suppose, further, that \(S \colon L\Omega' \to \Omega\) is surjective, \(L\) is expansive, \(\Omega\) is compact and \(\Omega' \subseteq \Omega\). Then \(\Omega = \Omega'\).
\end{lemma}

\begin{proof}
Since \(\Omega\) is compact, each \(\mathscr{C}_n(\Omega)\) is finite by Lemma \ref{lem:FLC => finitely many configs}. Clearly \(\mathscr{C}_n(\Omega') \subseteq \mathscr{C}_n(\Omega)\) since \(\Omega' \subseteq \Omega\). In fact, these sets must be equal, since \(\sub \colon \Omega' \to \Omega\) is surjective and thus so is \(\sub \colon \mathscr{C}_n(\Omega') \to \mathscr{C}_n(\Omega)\), by Lemma \ref{lem:induced substitution on patches}. Thus, for each \(n \in \N_0\), we have
\[
\lang_n(\Omega') = \bigcup_{\cP \in \Omega'} \lang_n(\cP) = \bigcup_{C \in \mathscr{C}_n(\Omega')} \left( \bigcup_{p \in C} p \right) = \bigcup_{C \in \mathscr{C}_n(\Omega)} \left( \bigcup_{p \in C} p \right) = \bigcup_{\cP \in \Omega} \lang_n(\cP) = \lang_n(\Omega) .
\]
It is then clear that \(\lang(\Omega') = \lang(\Omega)\) (every patch, of some shape \(U\), is the restriction of some patch of shape \(V^n \supseteq U\)). Since a pattern space is determined by its language, \(\Omega = \Omega'\), as required.
\end{proof}

Another consequence of Theorem \ref{thm:finite languages of hierarchical}, useful for recognisability, is the following:

\begin{corollary}\label{lem:groups of periods FI}
Let \(\Omega\) be compact Hausdorff and expansive \(L\)-sub. Then, for all \(\cP \in \Omega\) and all \(\cP' \in \sub^{-1}(\cP)\), we have that \(L \cK_{\cP'} \leqslant \cK_{\cP}\) with finite index.
\end{corollary}

\begin{proof}
By Corollary \ref{cor:power fixing LI-classes}, some power \(\sub^N\) of substitution fixes LI-classes. Let \(\cQ \in \sub^{-(N-1)}(\cP')\). Then \(L^N \cQ \LD L\cP' \LD \cP\) (as we see by applying subdivision \(N-1\) times and then one extra time), so it follows that \(L^N \cK_{\cQ} \leqslant L\cK_{\cP'} \leqslant \cK_{\cP}\) (as LDs induce inclusions of groups of periods, see Lemma \ref{lem:periods under LI and LD}). Moreover, since \(\cP = \sub^N(\cQ) \LIs \cQ\), we have that \(\cK_{\cQ} = \cK_{\cP}\). Since \(\Omega\) is Hausdorff, \(\cK_{\cP}\) is topologically closed, so that \(L^N \cK_{\cQ} = L^N \cK_{\cP} \cong \cK_{\cP}\) must be finite index in \(\cK_{\cP}\), forcing the same for the intermediate subgroup \(L\cK_{\cP'}\).
\end{proof}

\subsection{Substitutional hulls and \(L\)-sub for patterns}
\label{sec:Substitutional hulls and L-sub for individual patterns}

In this section we fix some pattern \(\cP\) and consider the implications of its hull having a subdivision map.

\begin{proposition}\label{prop:L-sub hull properties}
Suppose that \(S \colon L\Omega_{\cP} \LDmap \Omega_{\cP}\), not necessarily surjective, with associated substitution \(\sub = S \circ L\). We have the following:
\begin{enumerate}
	\item If \(S\) is surjective, then \(\cP \LIs \sub(\cP)\), so \(\sub\) fixes the LI-class of \(\cP\).
	\item If \(\Omega_{\cP}\) is compact Hausdorff then \(S\) is surjective if and only if \(\sub^{-1}(\cP) \neq \emptyset\).
	\item If \(\Omega_{\cP}\) is compact and expansive \(L\)-sub, so that \(S\) is surjective, then \(\cP' \LIs \cP\) for all \(\cP' \in \sub^{-1}(\cP)\).
\end{enumerate}
\end{proposition}

\begin{proof}
Throughout, let \(\Omega \coloneqq \Omega_{\cP}\). Suppose that \(S\), equivalently \(\sub\), is surjective. Then there exists \(\cP' \in \Omega\) with \(\sub(\cP') = \cP\). Since \(\cP' \in \Omega\), equivalently \(\cP' \LI \cP\), by applying substitution we have \(\sub(\cP') = \cP \LI \sub(\cP)\) by Lemma \ref{lem:sub preserves LI}, so \(\cP \LIs \sub (\cP)\), proving (1).

Now suppose that \(\Omega\) is compact Hausdorff. Obviously it is necessary for there to exist some \(\cP' \in \sub^{-1}(\cP)\) for surjectivity. Given such a \(\cP'\), we have \(S(L\cP') = \cP\), thus \(L\cP' \LD \cP\). This induces a subdivision of hulls \(S' \colon L\Omega_{\cP'} \LDmap \Omega_{\cP}\) (see Proposition \ref{prop:induced LDs}) which must agree with \(S\) when restricted to \(L\Omega_{\cP'} \subseteq L\Omega_{\cP}\) (since \(S'(L\cP') = S(L\cP)\)), and be surjective (by the same proposition), so \(S\) is surjective, showing (2).

Finally, suppose that \(\Omega\) is compact (equivalently, \(\cP\) is FLC) and expansive \(L\)-sub. Then \(\Omega\) contains only finitely many LI-classes by Theorem \ref{thm:finite languages of hierarchical}. So there exists some \(N \in \N\) with \(\cP' \LIs \sub^N(\cP') = \sub^{N-1}(\cP) \LIs \cP\), where the latter follows from (1), proving (3).
\end{proof}

For a linear automorphism \(L\), if \(\Omega = \Omega_{\cP}\) is an \(L\)-sub hull then, by definition, \(S(L\cP') = \cP\) for some \(\cP' \in \Omega\). This means that \(L\cP' \LD \cP\) and \(\cP' \LI \cP\), which provides an elegant notion of an individual pattern being substitutional, purely in terms of the fundamental relations of local derivation and local indistinguishability:

\begin{definition}\label{def:L-sub}
A pattern \(\cP\) is called \textbf{\(L\)-substitutional}, or \textbf{\(L\)-sub}, for short, if there exists another pattern \(\cP' \LI \cP\) for which \(L\cP' \LD \cP\). The pattern \(\cP'\) is called a \textbf{predecessor} of \(\cP\). We say that \(\cP\) is \textbf{expansive \(L\)-sub} when \(L\) is expansive.
\end{definition}

\begin{remark}
It is easily checked that the property of being \(L\)-sub is invariant (and with a canonically defined predecessor) under local isomorphism and MLD equivalence. On the latter, see a similar statement for \(L\)-sub pattern spaces in Proposition \ref{prop:MLD invariance of L-sub}.
\end{remark}

\begin{remark}\label{rem:all elements L-sub for some power}
For an \(L\)-sub pattern space (even if it is a hull), we need not have \(\sub(\cQ) \LIs \cQ\) for all \(\cQ \in \Omega\), so it is not necessarily true that all elements of \(\Omega\) are \(L\)-sub. However, when \(\Omega\) is compact and expansive \(L\)-sub, we at least have that some power \(\sub^N\) of substitution fixes LI-classes, by Corollary \ref{cor:power fixing LI-classes}. Thus, every element of \(\cP \in \Omega\) is naturally \(L^N\)-sub with predecessor any \(\cP' \in \sub^{-N}(\cP)\), since then \(L^N \cP' \LD \cP\) and \(\cP' \LIs \cP\).
\end{remark}

As explained in the introduction, the \(L\)-sub definition is geometrically intuitive: a pattern \(\cP\) is considered \(L\)-sub when a pattern \(\cP'\) locally indistinguishable from it, but inflated by \(L\), may derive \(\cP\) by a local rule (see Figure \ref{fig:pen_sub}). And, indeed, the utility of this property was noticed before the notion of \(L\)-sub for a pattern space; we also note that it is an appropriate weakening of the pseudo-self affine property \cite{Sol07}. The results in the previous section allow us to show that the notions for hulls and for individual patterns are essentially equivalent. This is precisely formulated below, with the main result being that reasonable expansive \(L\)-patterns (FLC and well-separated ones) have hulls that are naturally \(L\)-sub. Note that, to keep our definition of \(L\)-sub as basic and applicable as possible, we did not even demand that the predecessor \(\cP' \LIs \cP\), merely that \(\cP' \LI \cP\). However, the below shows that the two are necessarily locally isomorphic given some standard conditions.

\begin{theorem}\label{thm:predecessor LI}
Suppose that \(S \colon L\Omega_{\cP} \LDmap \Omega_{\cP}\). Then \(\cP\) is \(L\)-sub with respect to any predecessor \(\cP' \in \sub^{-1}(\cP)\). When \(\Omega\) is compact Hausdorff, the existence of such an element is equivalent to surjectivity of \(S\). If, additionally, \(L\) is expansive, then necessarily \(\cP' \LIs \cP\).

Conversely, suppose that \(\cP\) is \(L\)-sub, with predecessor \(\cP'\). If \(\cP\) is FLC and well-separated, and \(L\) is expansive, then \(\Omega_{\cP}\) is \(L\)-sub, with respect to a uniquely defined and surjective LD map \(S \colon L\Omega_{\cP} \LDmap \Omega_{\cP}\) satisfying \(S(L\cP') = \cP\). Necessarily, \(\cP' \LIs \cP\).
\end{theorem}

\begin{proof}
If \(S \colon L\Omega_{\cP} \LDmap \Omega_{\cP}\) and \(\cP' \in \sub^{-1}(\cP)\) then \(L\cP' \LD \cP\) and \(\cP' \in \Omega_{\cP}\) i.e., \(\cP' \LI \cP\), so \(\cP\) is \(L\)-sub. The next pair of statements follow directly from Proposition \ref{prop:L-sub hull properties}.

So suppose we now start with some \(L\)-sub pattern \(\cP\), with predecessor \(\cP'\), so \(L\cP' \LD \cP\) with \(\cP' \LI \cP\), that is, \(\cP' \in \Omega_{\cP}\). The LD induces a unique LD map \(S \colon \Omega_{L\cP'} \LDmap \Omega_{\cP}\) with \(S(L\cP') = \cP\), by Proposition \ref{prop:induced LDs}. Assuming \(\cP\) is FLC and well-separated is equivalent (by Proposition \ref{prop:well-sep <=> HD}) to \(\Omega_{\cP}\) being compact Hausdorff. Then the closed subset \(\Omega_{\cP'} \subseteq \Omega_{\cP}\) must also be compact, thus so is \(\Omega_{L\cP'} = L \Omega_{\cP'} \cong \Omega_{\cP'}\), so the LD map is necessarily surjective, again by Proposition \ref{prop:well-sep <=> HD}. Finally, Lemma \ref{lem:expansive on sub-PS surjective=>equal PS} implies that \(\Omega_{\cP'} = \Omega_{\cP}\), equivalently \(\cP' \LIs \cP\), as required.
\end{proof}

The above result has some important consequences. Firstly, suppose we start with a compact Hausdorff pattern space \(\Omega\), expansive \(L\), a (not necessarily surjective) subdivision map \(S \colon L\Omega \to \Omega\) and induced substitution \(\sub = S \circ L\). By Proposition \ref{prop:restriction to hierarchical elements}, we can at least restrict this to a surjective subdivision/substitution map on the hierarchical elements \(\sH\). By Theorem \ref{thm:finite languages of hierarchical}, the set of LI-classes of \(\sH\) is finite, so by passing to a power \(\sub^N\) of substitution, we may assume that substitution preserves LI-classes. Then, by the above result, \(\sH\) is a finite union of \(L^N\)-sub hulls \(\Omega_{\cP} \subseteq \sH\) (where we may take the finite union over patterns \(\cP \in \sH\) with languages that are maximal).

If, instead, we start with an FLC, well-separated and expansive \(L\)-sub pattern \(\cP\), then necessarily any predecessor \(\cP' \LIs \cP\). Moreover, from this choice of \(\cP'\) we obtain an associated surjective subdivision map \(S \colon L\Omega_{\cP} \LDmap \Omega_{\cP}\). In particular, all elements \(\cP_i\) in a hierarchy over \(\cP\) are locally isomorphic. This need \emph{not} hold for other elements of \(\Omega_{\cP}\), we still only have that some power \(\sub^N\) of substitution fixes LI-classes, possibly with \(N \neq 1\), see Example \ref{exp:subs not LI}.

One would expect that we cannot drop the requirement that \(L\) is expansive in showing that \(\cP' \LIs \cP\) in Theorem \ref{thm:predecessor LI}. So it is natural to try to find a counter example in the simplest case of non-expansion \(L = \id\). That is, to find FLC and well-separated patterns \(\cP\) and \(\cP'\) with \(L\cP' = \cP' \LD \cP\) and \(\cP' \LI \cP\) but \emph{not} \(\cP \LI \cP'\). Specialising further, one is led to the following simply-stated question in the symbolic setting:

\begin{question}
Is there a transitive subshift \(X \subseteq A^{\Z}\), for a finite set \(A\), that contains a proper transitive subshift \(Y \subset X\) admitting a surjective factor map \(f \colon Y \to X\)?
\end{question}

We are unaware of any such examples, although consensus appears to be that they should exist. In this case, expansivity of \(L\) cannot be dropped in showing that \(\cP' \LIs \cP\).

\subsection{Minimal \(L\)-sub pattern spaces}
Although we mostly aim to address recognisability without minimality, in this section we will briefly recover some basic recurrence results in the repetitive case. Since minimal pattern spaces are hulls (see Proposition \ref{prop:repetitive <=> minimal}), we start with an individual pattern \(\cP\). We assume it is expansive \(L\)-sub, FLC and well separated, with predecessor \(\cP'\). This defines a surjective subdivision map \(S \colon L\Omega_{\cP} \to \Omega_{\cP}\) according to Theorem \ref{thm:predecessor LI}.

Even without an initial substitution rule on a prototile set, we may still define an essentially equivalent notion of `primitivity' for the individual \(L\)-sub pattern:

\begin{definition}\label{def:primitive}
We call an \(L\)-sub pattern \(\cP\) \textbf{primitive} if its \(V^0\)-patches appear relatively densely, that is, there exists some \(X \cpt E\) for which, for all \(x\), \(y \in E\), there exists some \(z \in X\) with \(\cP[x,V^0] = \cP[y+z,V^0]\).
\end{definition}

\begin{example}
The name `primitive' is motivated by FLC tile stone inflations. Such a substitution is primitive if there exists some \(k \in \N\) so that the \(k\)-fold substitution of any prototile contains at least one translated copy of each prototile \cite{BG13}. It is easy to show that tilings admitted by a primitive substitution are also primitive in the above sense. Conversely, if all \(V^0\)-patches appear relatively densely, then by taking a high enough supertile level so that all supertiles contain a translated copy of \(X\), we see that the substitution is primitive in the usual sense (after, perhaps, discarding any prototiles that do not appear in the tiling).
\end{example}

\begin{lemma}\label{lem:seeing all patches from just 0-patches}
Suppose that \(X \subset E\) meets the centres of all \(V^0\)-patches in \(\cP = \cP_0\), that is, for all \(p \in \lang_0(\cP)\), there exists some \(x \in X\) for which \(p = \cP[x,V^0]\). Then, for each \(k \in \N_0\), we have that \(L^k X\) meets the centres of all \(V^k\)-patches in \(\sub^k(\cP)\), that is, for all \(p \in \lang_k(\sub^k \cP)\) there exists some \(x \in L^k X\) so that \(p = (\sub^k \cP)[x,V^k]\).

In particular, suppose that \(\cP\) is FLC, well-separated and primitive, with \(X \cpt E\) as in Definition \ref{def:primitive}. Then there exists some \(n \in \N_0\) for which \(\cP\) is \(\rho\)-repetitive for \(\mathrm{dom}(\rho) = \{V^k \mid k \in \N_0\}\) and \(\rho(V^k) =  V^{k+n}\). In particular, \(\cP\) is repetitive.
\end{lemma}

\begin{proof}
This is essentially a corollary of Lemma \ref{lem:bigger agreement after substitution}. Indeed, take any \(p \in \lang_k(\sub^k \cP)\), say \(p = (\sub^k \cP)[y,V^k]\) for \(y \in E\). Then, by assumption, we have that \(\cP[x,V^0] = \cP[L^{-k} y,V^0]\) for some \(x \in X\). Substituting \(k\) times using this lemma, \((\sub^k \cP)[L^k x ,V^k] = (\sub^k \cP)[y,V^k]\). Since \(L^k x \in L^k X\), the first claim follows.

So now suppose that \(\cP\) is FLC and well-separated and there exists \(X \cpt E\) for which each \(X+z\) meets centres of all \(V^0\)-patches. We have that \(X \subseteq L^n B\) for some \(n \in \N_0\). By the claim just established, all translates \(L^k X + z \subseteq L^k(L^n B) + z\) contain centres of all \(V^k\)-patches in \(\sub^k(\cP)\). Moreover, \(\sub^k(\cP) \LIs \cP\) (see Proposition \ref{prop:L-sub hull properties}), so clearly the same is true in \(\cP\). Since \(L^k(L^n B) = L^{k+n}B \subseteq V^{k+n}\), the result follows.
\end{proof}

\begin{example}\label{exp:self-similar repetitive}
Suppose that \(\cP\) is primitive \(L\)-sub with \(Lx = M(\lambda x)\) an expansive similarity (an expansion \(x \mapsto \lambda x\) for \(\lambda > 1\) followed by a norm-preserving linear \(M \colon E \to E\)). Then the above shows that every \((\lambda^k+\kappa)\)-patch, for \(k \in \N_0\), has centre meeting every \((\lambda^{k+n}+\kappa)\)-ball for some fixed \(n \in \N\), since \(V^k \coloneqq (L^k(B_1))_{+\kappa} = B(\mathbf{0},\lambda^k+\kappa)\) in this case. Take any \(r \geq \kappa + 1\). Then there is some \(k \in \N\) with \(\lambda^{k-1} + \kappa \leq r < \lambda^k + \kappa\), so every \(r\)-patch has a centre in every \((\lambda^{k+n}+\kappa)\)-ball. For \(r \geq \kappa + 1\), since
\[
\frac{\lambda^{k+n}+\kappa}{r} \leq \frac{\lambda^{k+n}+\kappa}{\lambda^{k-1}+\kappa} \leq \frac{\lambda^{k+n}+\kappa}{\lambda^{k-1}} \leq \lambda^{n+1} + \kappa ,
\]
all \(r\)-patches occur with centres in all \(Cr\)-balls (\(C = \lambda^{n+1} + \kappa\)), so \(\cP\) is \emph{linearly} repetitive.
\end{example}

\begin{example}\label{exp: short repeats for self-affine}
By Lemma \ref{lem:seeing all patches from just 0-patches}, if \(\cP\) is primitive then there exists some \(\ell \in \N\) (which, by the proof, depends only on \(L\), and density of recurrence of \(V^0\)-patches) satisfying the following: if \(\cP[x,V^{k+\ell}] = \cP[x+u,V^{k+\ell}]\) for some \(u \in V^k\), then \(u \in \cK\). In the self-similar case, by Example \ref{exp:self-similar repetitive}, there exists some \(C > 0\) so that if \(\cP[x,Cr] = \cP[x+u,Cr]\) for \(\|u\| \leq r\) and \(r \geq \kappa + 1\), then \(u \in \cK\), by Lemma \ref{lem:close repeat => period}. In particular, if \(\cP\) is non-periodic then it is `linearly repulsive' \cite{BBL13} in this case, that is, there exists some \(C > 0\) so that, for sufficiently large \(r\), all return vectors to \(r\)-patches have length at least \(Cr\).
\end{example}

\subsection{Full patches near the origin and elements fixed by substitution}

For our main recognisability proof, a key step will be using that all \(U\)-patches of a pattern \(\cP \in \Omega\) can be found reasonably close to the origin, relative to the size of \(U\). This allows for an application of Lemma \ref{lem:close repeat => period}, in constructing a period of \(\cP\) from a return vector to such a patch.

\begin{proposition}\label{prop:full patch at origin}
Let \(\Omega\) be compact Hausdorff and expansive \(L\)-sub. There exists some \(N \in \N\) so that \(\sub^N\) fixes LI-classes and \(Y \cpt E\) satisfying the following. Let \(\cP \in \Omega\) be arbitrary and choose any hierarchy \((\cP_i)_{i=0}^\infty\) of \(\cP\). Then, for all \(i \in \N_0\), every \(V^0\)-patch of \(\cP\) appears in \(\cP_{Ni}\) with centre in \(Y\). That is, for all \(i \in \N_0\) and \(p \in \lang_0(\cP)\), there exists some \(y \in Y\) with \(\cP_{Ni}[y,V^0] = p\).

Thus, there exists \(\ell \in \N\) satisfying the following. For all \(\cP \in \Omega\) and \(i \in \N_0\), every \(V^i\)-patch of \(\cP\) appears with centre in \(V^{i+\ell}\) in \(\cP\). That is, for all \(p \in \lang_i(\cP)\), there exists some \(y \in V^{i+\ell}\) with \(p = \cP[y,V^i]\).
\end{proposition}

\begin{remark}
Before embarking on the proof, it may be helpful to the reader to consider the case of FLC tile substitutions. For each tile in \(\cP\), repeated substitution of it either eventually generates all tile types in \(\cP\), a so-called `generator' tile, or it does not, a `non-generator'. The origin must lie near generators arbitrarily high up the hierarchy, or else we would not see all tile types in \(\cP\) to larger and larger radii (in larger and larger non-generating supertiles) under iterated substitution. Since there are only finitely many tiles (FLC), the generating tiles generate all tiles in a bounded number \(\nu \in \N_0\) of applications of substitution, so we should be able to find all tiles in \(\cP_i\) within the (bounded) \(q\)-supertile defined by the tile of \(\cP_{i+\nu}\) over the origin, which needs to be generating; technically, to avoid issues near (or on) tile boundaries, we need to talk about (non-) generating small \(V^0\)-patches, but the argument is similar. The proof below makes this argument precise for general \(L\)-sub pattern spaces.
\end{remark}

\begin{proof}
As in the statement, take arbitrary \(\cP \in \Omega\) and choose any hierarchy \((\cP_i)_i\) for it. Let \(N \in \N\) be such that \(\sub^N\) fixes LI-classes, using Corollary \ref{cor:power fixing LI-classes}. To simplify notation for proving the first statement, we will pass to this power by letting \(\sub\) now denote \(\sub^N\), and \(L\) for \(L^N\). So we are implicitly restricting to hierarchy \((\cP_{iN})_i\) for \(\cP\), with respect to the substitution \(\sub^N\), so the result will now follow by proving the first statement with \(N=1\) in this new notation.

Recall the notation \(\lang_n(\cP)\) from Notation \ref{not:language levels}. Also recall the mapping on patches induced by substitution, defined in Lemma \ref{lem:induced substitution on patches}. This defines maps \(\sub \colon \lang_n(\cP) \to \lang_{n+1}(\cP)\) (noting that \(\sub\) now fixes LI-classes). A patch \(p \in \lang_n(\Omega)\) (thus, \(p \in A^{V^n}\)) determines its own `sub-patches': for \(m < n\) and \(p' \in \lang_m(\Omega)\), let us write \(p' \triangleleft p\) if there exists some \(x \in E\) so that \(V^m+x \subseteq V^n\) and \(p'(u) = p(u+x)\) for all \(u \in V^m\) (i.e., a translated copy of \(p'\) appears inside \(p\)). 

For each \(n \in \N_0\) and \(p \in \lang_n(\cP)\), define \(f(p) \in \N_0 \cup \{\infty\}\) to be the minimum value \(i \in \N_0\) for which, for all \(p' \in \lang_0(\cP)\), we have \(p' \triangleleft \sub^i(p)\), or \(f(p) = \infty\) if there is no such \(i \in \N_0\). We consider \(p\) to be `generating' if \(f(p) < \infty\) and otherwise \(f(p)\) is `non-generating'. By FLC, there is some \(K \cpt E\) for which, for all \(p' \in \lang_1(\cP)\), we have \(p' = \cP[x,V^1]\) for some \(x \in K\). Take \(\nu \in \N\) large enough so that \(V^\nu \supseteq K+V^0\). Then \(q = \cP_\nu[\mathbf{0},V^1]\) has \(f(q) \leq \nu < \infty\), so \(q\) is generating, since \(\sub^\nu(q) = \cP[\mathbf{0},V^{\nu+1}]\). So there is a generating patch in \(\lang_1(\cP) =  \lang_1(\cP_\nu)\). Of course, by a similar argument, we can also find generating patches in \(\lang_0(\cP)\), but we want \(q \in \lang_1(\cP)\) in the following.

Let \(K_2 \cpt E\) be such that, for all \(p \in \lang_2(\cP)\), we have \(p = \cP[x,V^2]\) for some \(x \in K_2\). Let \(\epsilon > 0\) be such that \(V^0 + B_\epsilon \subseteq V^1\) and \(V^1 + B_\epsilon \subseteq V^2\). Choose a finite set \(S \subset K_2\) that is \(\epsilon\)-dense, that is, for each \(x \in K_2\) there exists \(s \in S\) with \(\|s-x\| \leq \epsilon\). For each \(s \in S\), let \(F(s) \in \N_0 \cup \{\infty\}\) be defined by \(F(s) \coloneqq f(\cP[s,V^1])\). Let \(S_\infty \coloneqq \{s \in S \mid F(s) = \infty\}\) and \(S_{<\infty} \coloneqq \{s \in S \mid F(s) < \infty\}\). Without loss of generality, by the previous paragraph, we may suppose that \(S_{<\infty} \neq \emptyset\) (by including a centre of the patch \(q\) in \(S\)). Let \(M \coloneqq \max \{F(s) \mid s \in S_{<\infty}\} \in \N_0\). Define \(p_i \in \lang_1(\cP)\) by \(p_i \coloneqq \cP_i[\mathbf{0},V^1]\). Increasing \(M\), if necessary, we may assume that for all \(p \in \lang_0(\cP)\), we have \(p \triangleleft \cP[\mathbf{0},V^M]\). Note that, by the definition of \(M\), for any \(p \in S_{< \infty}\) and any \(p' \in \lang_0(\cP)\), we have that \(p' \triangleleft \sub^M(p)\). Indeed, if \(\sub^m(p)\) contains all patches of \(\lang_0(\cP)\) for some \(m\), then the same is true of \(\sub^M(p) = \sub^{M-m}(\sub^m(p))\) for all \(M \geq m\).

For \(n \geq M\), let \(p_n' \coloneqq \cP_n[\mathbf{0},V^2]\). We claim that, for each \(n \geq M\) and \(p \in \lang_0(\cP)\), we have \(p \triangleleft \sigma^M(p_n')\). By our definition of \(M\), this holds for \(n = M\) (even restricting \(p_M'\) to \(\cP_M[\mathbf{0},V^0] \in \lang_0(\cP)\)), since \(\sub^M(\cP_M[\mathbf{0},V^0]) = \cP[\mathbf{0},V^M]\), which we assumed contains all \(V^0\)-patches. Given \(n > M\), write \(p_n' = \cP[x,V^2]\) for \(x \in K_2\) and take \(s \in S\) with \(\|s-x\| \leq \epsilon\). Then, defining \(q \coloneqq \cP[s,V^1]\), we have \(q \triangleleft p_n'\), since \(V^1 + (s-x) \subseteq V^2\) by our choice of \(\epsilon\).

We claim that \(s \in S_{<\infty}\). Indeed, \(q\) contains the patch \(q' \coloneqq \cP[x,V^0] = \cP_n[\mathbf{0},V^0]\), since \(V^0 + (x-s) \subseteq V^1\). Thus, \(\sub^n(q)\) contains \(\sub^n(q') = \cP[\mathbf{0},V^n]\). The latter, by assumption (as \(n > M\)), contains all \(V^0\)-patches, thus so does \(\sub^n(q) = \sub^n(\cP[s,V^1])\), hence \(s \in S_{<\infty}\). Thus, by our choice of \(M\) and definition of \(S_{<\infty}\), we have that \(\sub^M(q) = \sub^M(\cP[s,V^1])\) contains all \(V^0\)-patches, hence so does the patch \(\sub^M(p_n)\) that contains it.

So we have shown that, for all \(p \in \lang_0(\cP)\) and all \(n \geq M\), we have \(p \triangleleft \sigma^M(p_n) = \sigma^M(\cP_n[\mathbf{0},V^2]) = \cP_{n-M}[\mathbf{0},V^{M+2}]\). Then the first claim follows, taking \(Y = V^{M+2}\).

The second claim now follows quickly: take \(Y \cpt E\) as in the first (and return notation to the original substitution and expansion, so that we do not necessarily have \(N = 1\)). Take \(\ell \geq N\) with \(Y \subseteq L^{\ell-N+1}(B_1)\). Let \(i \in \N_0\) and \(p \in \lang_i(\cP)\) be arbitrary, say \(p = \cP[x,V^i]\). Let \(j \in \N_0\) be such that \(i \leq Nj < i+N\). Then \(p\) is the restriction of the patch \(\cP[x,V^{Nj}] = \sub^{Nj}(q)\) to \(V^i\), where \(q = \cP_{Nj}[L^{-Nj} x, V^0]\). Since \(\cP_{Nj} \LIs \cP\), we have that \(q \in \lang_0(\cP)\), so by the defining property of \(Y\) we may in fact write \(q = \cP_{Nj}[y,V^0]\) for some \(y \in Y\). Applying \(\sub^{Nj}\), we have \(\sub^{Nj}(q) = \cP[L^{Nj} y,V^{Nj}]\) and thus, by restricting (as established above), \(p = \cP[L^{Nj} y, V^i]\). We have \(L^{Nj}y \in L^{Nj}Y \subseteq L^{i+N-1}(L^{\ell-N+1}(B_1)) = L^{i+\ell}(B_1) \subseteq V^{i+\ell}\) (see Notation \ref{not:balls}), as required.
\end{proof}

Although we will not need the following result elsewhere in this paper, it seems useful to point out that, just as for primitive stone inflation rules, some power of substitution has fixed points:

\begin{proposition}
If \(\Omega\) is FLC, well-separated and expansive \(L\)-sub, then there exists some \(\cP \in \Omega\) and \(n \in \N\) with \(\sub^n(\cP) = \cP\). Thus, \(\Omega\) contains pseudo self-affine points for some power of \(L\), that is, points \(\cP \in \Omega\) with \(L^n \cP \LD \cP\) for some \(n \in \N\).
\end{proposition}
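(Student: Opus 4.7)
The plan is to construct a fixed point of some power $\sub^n$ directly, by iterating substitution from a suitable ``seed'' $V^0$-patch at the origin.

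First, I would produce the seed. By FLC, the set $W^0 \coloneqq \{\cR[\mathbf{0},V^0] : \cR \in \Omega\}$ is finite (c.f.\ Lemma \ref{lem:FLC => finitely many configs}). Substitution induces a well-defined self-map $\phi\colon W^0 \to W^0$ sending $\cR[\mathbf{0},V^0]$ to $\sub(\cR)[\mathbf{0},V^0]$: indeed, if $\cR_1[\mathbf{0},V^0] = \cR_2[\mathbf{0},V^0]$, then Lemma \ref{lem:bigger agreement after substitution} applied at $x = y = \mathbf{0}$ yields $\sub(\cR_1)[\mathbf{0},V^1] = \sub(\cR_2)[\mathbf{0},V^1]$, and restricting to $V^0 \subseteq V^1$ shows that $\sub(\cR)[\mathbf{0},V^0]$ depends only on $\cR[\mathbf{0},V^0]$. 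Since $W^0$ is a non-empty finite set, $\phi$ has a periodic point: there exist $p \in W^0$ and $n \in \N$ with $\phi^n(p) = p$.

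Next, I would propagate the seed up the hierarchy. Picking any $\cR \in \Omega$ with $\cR[\mathbf{0},V^0] = p$, we have $\sub^n(\cR)[\mathbf{0},V^0] = \phi^n(p) = p = \cR[\mathbf{0},V^0]$. A straightforward induction on $m$, using Lemma \ref{lem:bigger agreement after substitution} with $x = y = \mathbf{0}$ at each step, then establishes
\[
\sub^{n+m}(\cR)[\mathbf{0}, V^m] = \sub^m(\cR)[\mathbf{0}, V^m] \quad \text{for every } m \geq 0,
\]
as each application of the lemma increases the exponent on $\sub$ and the level of $V^{\bullet}$ together. Specialising to $m = jn$ for $j \in \N_0$, the patches $p_j \coloneqq \sub^{jn}(\cR)[\mathbf{0}, V^{jn}]$ satisfy $p_{j+1}|_{V^{jn}} = p_j$, hence they are compatible and determine a well-defined pattern $\cP$ on $E$ with $\cP[\mathbf{0}, V^{jn}] = p_j$ for all $j$.

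Finally, since $L$ is expansive the sets $V^{jn}$ exhaust $E$, so $\sub^{jn}(\cR) \to \cP$ in the local topology, and completeness (Lemma \ref{lem:hull complete}) places $\cP \in \Omega$. By continuity of $\sub^n$ (see Lemma \ref{lem:hull is DS} and Proposition \ref{prop:induced LDs}) we get $\sub^n(\sub^{jn}(\cR)) \to \sub^n(\cP)$, while the same convergence argument applied to the shifted index sequence gives $\sub^{(j+1)n}(\cR) \to \cP$. As $\Omega$ is Hausdorff (Proposition \ref{prop:well-sep <=> HD}), limits are unique, so $\sub^n(\cP) = \cP$; writing $\sub^n = S' \circ L^n$ as in Lemma \ref{lem:iterating substitution} then gives $L^n \cP \LD \cP$, exhibiting $\cP$ as pseudo self-affine for $L^n$. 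I do not see any significant obstacle here; the only delicate point is well-definedness of $\phi$, and this drops out of Lemma \ref{lem:bigger agreement after substitution} once one uses the containment $V^0 \subseteq V^1$, which holds because $L$ is expansive in the chosen norm.
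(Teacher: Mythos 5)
The second half of your argument --- propagating the seed with Lemma \ref{lem:bigger agreement after substitution}, noting the nested patches give a Cauchy sequence, and identifying the limit as a fixed point of \(\sub^n\) via continuity and Hausdorffness --- is sound and is essentially how the paper concludes. The gap is at the very first step: the set \(W^0 \coloneqq \{\cR[\mathbf{0},V^0] : \cR \in \Omega\}\) is \emph{not} finite in general, so the pigeonhole producing a periodic seed fails. FLC does not bound the number of origin-anchored patches: since \(\Omega\) is closed under the continuous translation action, \(\cR[\mathbf{0},V^0]\) varies over a continuum as \(\cR\) runs over the translates of a single tiling or Delone set (the anchored patch records the position of the origin relative to tile boundaries or points, which moves continuously), so \(W^0\) is typically uncountable. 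Lemma \ref{lem:FLC => finitely many configs}, which you cite, asserts something different: the collection \(\{W_{V^0}(\cP) \mid \cP \in \Omega\}\) of possible \emph{sets} of \(V^0\)-patches is finite, while each individual \(W_{V^0}(\cP)\) is generally infinite. Your map \(\phi\) is indeed well defined on \(W^0\) (that part of the argument, using \(V^0 \subseteq V^1\), is fine), but without finiteness there is no reason for \(\phi\) to have a periodic point; this pigeonhole works in the symbolic setting, where patches anchored at lattice points form a finite set, but not in the continuous setting here.

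This is precisely the difficulty the paper's proof is built to avoid. There one fixes \(\cQ \in \Omega\), chooses preimages \(\cQ_n \in \sub^{-n}(\cQ)\), and uses FLC only to find \(x_n\) in a fixed bounded set \(B_N\) with \(\cQ[x_n,V^1] = \cQ_n[\mathbf{0},V^1]\); the bounded translation discrepancy is then absorbed by replacing \(\cQ_n\) with \(\cQ_n - y\), where \(y\) solves \(y = L^{-n}(x_n + y)\), which is possible for large \(n\) because \(\|(L^n - \id)^{-1}\| \to 0\) by expansivity of \(L\). This yields exactly the seed your pigeonhole was meant to supply, namely \(\cQ' \in \Omega\) with \(\cQ'[\mathbf{0},V^0] = (\sub^n \cQ')[\mathbf{0},V^0]\), after which your argument and the paper's coincide. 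To repair your write-up you would need to replace the finiteness claim by an argument of this kind (some compactness plus a contraction-type correction of the anchoring translation); as written, the first step is false for essentially every geometric example covered by the proposition. (Two minor citation slips: continuity of \(\sub\) for a general \(L\)-sub pattern space comes from Lemma \ref{lem:LD is factor map} applied to the given subdivision map rather than Proposition \ref{prop:induced LDs}, and the identity \(\sub^n = S' \circ L^n\) in this generality is Theorem \ref{thm:power of sub fixes LI-classes} rather than Lemma \ref{lem:iterating substitution}.)
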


\begin{proof}
First, using Corollary \ref{cor:power fixing LI-classes}, we assume that \(\sub\) fixes LI-classes of \(\Omega\) (otherwise, we begin by passing to a power that does). Take any \(\cQ \in \Omega\) and, using FLC, let \(N \in \N\) be such that all \(V^1\)-patches occur with centre in \(B_N\), that is, for all \(x \in E\) there exists some \(y \in V^1\) so that \(\cQ[x,V^1] = \cQ[y,V^1]\).

For each \(n \in \N\), pick some \(\cQ_n \in \sub^{-n}(\cQ)\). Take \(x_n \in B_N\) with \(\cQ[x_n,V^1] = \cQ_n[\mathbf{0},V^1]\). Given any \(y \in E\) small enough that \(V^0 + y \subseteq V^1\) (for the second equality below), by Lemma \ref{lem:pattern properties} we have
\begin{align*}
(\cQ_n-y)[\mathbf{0},V^0] & = \cQ_n[y,V^0] = \cQ[x_n+y,V^0] = (\cQ-(x_n+y))[\mathbf{0},V^0] \\
                          & = (\sub^n(\cQ_n-L^{-n}(x_n+y)))[\mathbf{0},V^0] .
\end{align*}
For sufficiently large \(n\), we may take \(y\) sufficiently so small (as required for above) and also satisfying \(y = L^{-n}(x_n+y)\). Indeed, equivalently, \(y = (L^n - \id)^{-1}x_n\) and, since \(L\) is expansive, \(\|(L^n - \id)^{-1}\| \to 0\) as \(n \to \infty\), whilst \(\|x_n\| \leq N\) remains bounded. So we have shown that there exists \(n \in \N\) and some \(\cQ' = \cQ_n-y \in \Omega\) satisfying \(\cQ'[\mathbf{0},V^0] = (\sub^n \cQ')[\mathbf{0},V^0]\).

Define \(\cP_i \coloneqq \sub^{ni}(\cQ')\). Thus, \(\cP_0[\mathbf{0},V^0] = \cP_1[\mathbf{0},V^0]\). Applying \(\sub^n\), from Lemma \ref{lem:bigger agreement after substitution}, we have \(\cP_1[\mathbf{0},V^1] = \cP_2[\mathbf{0},V^1]\) (in fact, even \(\cP_1[\mathbf{0},V^n] = \cP_2[\mathbf{0},V^n]\)). Iterating, we have \(\cP_i[\mathbf{0},V^i] = \cP_{i+1}[\mathbf{0},V^i]\). Thus, \((\cP_i)_{i \in \N}\) is a Cauchy sequence, so \(\cP_i \to \cP\) as \(i \to \infty\), by compactness (or completeness) of \(\Omega\). By continuity of \(\sub^n\) and \(\Omega\) being Hausdorff, it follows that
\[
\sub^n(\cP) = \sub^n \lim \cP_i = \lim \sub^n \cP_i = \lim \cP_{i+1} = \cP ,
\]
so \(\sub^n(\cP) = \cP\), as required. Hence, \(L^n \cP \LD \cP\), so \(\Omega\) contains pseudo self-affine points.
\end{proof}

\section{Recognisability}
\label{sec:recognisability}

Having built up the necessary framework, we now state and prove our main recognisability results. In our geometric setting, a useful notion of recognisability in the presence of periodic and non-periodic points is the following:

\begin{definition}
Let \(\Omega\) be an \(L\)-sub pattern space, with associated substitution \(\sub \colon \Omega \to \Omega\). We say that \(\sub\) satisfies \textbf{unique composition modulo translation}, or \UC, for short, if for all \(\cU\), \(\cV \in \Omega\) with \(\sub(\cU) = \sub(\cV)\) we have \(\cU = \cV + x\) for some \(x \in E\).
\end{definition}

Thus, \UC\ is simply the condition that all pre-images under substitution are translation equivalent. Our main theorem is that it always holds under reasonable circumstances:

\begin{theorem}\label{thm:recognisability}
For a compact Hausdorff expansive \(L\)-sub pattern space \(\Omega\), the substitution map \(\sub \colon \Omega \to \Omega\) satisfies \UC.
\end{theorem}

We will soon see how this relates to the degree of injectivity of substitution. Such a result is already known in the case of primitive stone inflation tilings \cite{Sol98}. For non-primitive admissible stone inflations, it is also known that substitution is injective when all elements of the substitution tiling space are non-periodic and that non-periodic elements have unique pre-images when there are also periodic elements, provided a `non-periodic border condition' holds \cite{CS11}. Such results of course also apply when an MLD equivalence to such a case can be established. The above result extends these by already applying to general types of patterns (including non-return discrete ones, allowing for non-relatively dense point sets), with a slightly broader notion of being substitutional (even for tilings) and needing no further conditions such as the non-periodic border forcing condition. Moreover, as we will see now, it directly leads to a formula for the size of all fibres of substitution, not just over the non-periodic elements.

Throughout the remainder of this section, we assume that \(\Omega\) satisfies the hypotheses of Theorem \ref{thm:recognisability}. Before proving it, we deduce some simple consequences of \UC.

\begin{proposition}\label{prop:number of pre-images}
For all \(\cP \in \Omega\) and all \(\cP' \in \sub^{-1}(\cP)\) we have
\[
\# \sub^{-1}(\cP) = [\cK_{\cP} : L \cK_{\cP'}] < \infty .
\]
In particular, if \(\cP\) is non-periodic, it has a unique pre-image under substitution.
\end{proposition}

\begin{proof}
We have \(\sub^{-1}(\cP) = \{\cP' + x \mid x \in L^{-1} \cK_{\cP} \}\). Indeed, by \UC, \(\cQ \in \sub^{-1}(\cP)\) if and only if \(\cQ = \cP' + x\) for some \(x \in E\). And, for such a translate, \(\cP' + x \in \sub^{-1}(\cP)\) if and only if \(\cP = \sub(\cP'+x) = \sub(\cP')+Lx = \cP+Lx\), that is, \(Lx \in \cK_{\cP}\), as required. Now, given two such translates, we have \(\cP'+x = \cP' + y\) if and only if \(\cP' + (y-x) = \cP'\), that is, \(y-x \in \cK_{\cP'}\). Thus, we may identify \(\sub^{-1}(\cP)\) with the quotient group \(L^{-1}\cK_{\cP} / \cK_{\cP'} \cong \cK_{\cP} / L \cK_{\cP'}\). This is finite, by Corollary \ref{lem:groups of periods FI}. If \(\cP\) is non-periodic, we have that \(\cK_{\cP}\) is the trivial group, so this index is equal to \(1\).
\end{proof}

For the following, recall that \(\cK_{\cP}\) is discretely non-periodic (Definition \ref{def:discretely periodic}) if \(\cK_{\cP}\) has trivial discrete component or, equivalently, \(\cK_{\cP}\) is connected.

\begin{corollary}\label{cor:uniqueness of pre-images}
If \(\cP \in \Omega\) is discretely non-periodic then \(\# \sub^{-1}(\cP) = 1\). Conversely, there exists some \(N \in \N\) so that \(\sub^N\) fixes LI-classes, for which \(\# \sub^{-N}(\cP) = [\cK_{\cP} : L^N \cK_{\cP}]\) and thus, if \(\# \sub^{-N}(\cP) = 1\), then \(\cP\) is discrete non-periodic.
\end{corollary}

\begin{proof}
If \(\cP\) is discretely non-periodic, \(\cK_{\cP}\) is connected and thus some real subspace of \(E\). Given \(\cP' \in \sub^{-1}(\cP)\), the connected component of \(L\cK_{\cP'}\) at the origin must be all of \(V\), or else \([\cK_{\cP} : L \cK_{\cP'}] = \infty\), contradicting Proposition \ref{prop:number of pre-images}. But then \(\cK_{\cP} = L\cK_{\cP'}\) and this index is \(1\), so that \(\sub^{-1}(\cP) = \{\cP'\}\) by the same result.

Conversely, by Corollary \ref{cor:power fixing LI-classes}, there exists the stated power \(N \in \N\) of substitution fixing LI-classes. By applying Proposition \ref{prop:number of pre-images} with substitution \(\sub^N\), and any \(\cP \in \Omega\), \(\cP' \in \sub^{-N}(\cP)\), we obtain
\[
\# \sub^{-N}(\cP) = [\cK_{\cP} : L^N \cK_{\cP'}] = [\cK_{\cP} : L^N \cK_{\cP}] ,
\]
where the second equality follows from \(\cP \LIs \cP'\) and thus \(\cK_{\cP} = \cK_{\cP'}\) (Lemma \ref{lem:periods under LI and LD}). By Lemma \ref{lem:index of closed subgroups}, this index is equal to \(1\) if and only if \(\cK_{\cP}\) has trivial discrete component, that is, \(\cP\) is discretely non-periodic, as required.
\end{proof}

In the above result, recall that being discretely periodic is equivalent to being periodic in the return discrete cases (e.g., from tilings or Delone sets). So, in this case, the above says that non-periodic points have unique pre-images, whilst periodic points have multiple pre-images at least under some power of the substitution. This power cannot be dropped in general, even in the case of hulls of tilings, see Example \ref{exp:subs not LI}. We summarise this with the following recognisability dichotomy:

\begin{corollary}\label{cor:injectivity dichotomy}
Substitution \(\sub \colon \Omega \to \Omega\) is injective if and only if each \(\cP \in \Omega\) is discretely non-periodic. In the return discrete case, this is equivalent to each \(\cP \in \Omega\) being non-periodic. When \(\sub \colon \Omega \to \Omega\) is injective, the inverse \(S^{-1} \colon \Omega \LDmap L \Omega\) of subdivision is an LD map.
\end{corollary}

\begin{proof}
By Corollary \ref{cor:uniqueness of pre-images}, if each \(\cP\) is discretely non-periodic then each \(\# \sub^{-1}(\cP) = 1\), that is, \(\sub\) is injective. Conversely, if \(\sub\) is injective then so is \(\sub^N\), so each \(\cP\) is discretely non-periodic by the same result. In the return discrete case any group of periods \(\cK_{\cP}\) of a pattern \(\cP \in \Omega\) is discrete (Proposition \ref{prop:return discrete from discrete periods}), so discrete non-periodicity is equivalent to non-periodicity, as required. Finally, suppose that \(\sub \coloneqq S \circ L\) is injective, equivalently \(S \colon L\Omega \LDmap \Omega\) is injective (and it is also surjective, by definition of an \(L\)-sub pattern space). Then \(S^{-1}\) is also an LD map, by Proposition \ref{prop:inverse of LD is LD}.
\end{proof}

In the case of a minimal hull \(\Omega = \Omega_{\cP}\) of an FLC tiling, all elements of \(\Omega\) are locally isomorphic (Proposition \ref{prop:repetitive <=> minimal}), and thus all have the same groups of periods, so that \(\Omega\) consists of only non-periodic elements if and only if \(\cP\) is non-periodic. Thus, the well-known unique composition result of Solomyak \cite{Sol07} in the case of an FLC and primitive stone inflation follows from the above. The following result gives a more general result for an individual return discrete and substitutional pattern, that need not be repetitive:

\begin{corollary}\label{cor:unique decomposition for individual patterns}
Let \(\cP\) be an FLC and return discrete pattern in the Euclidean space \(E\) and \(L \colon E \to E\) a linear expansion. Suppose that \(\cP\) is \(L\)-sub, that is, there is a pattern \(\cP'\) for which \(\cP' \LI \cP\) and \(L\cP' \LD \cP\). Then \(\cP' \LIs \cP\). We have that \(\cP \LD L\cP'\), equivalently \(\sub \colon \Omega_{\cP} \to \Omega_{\cP}\) is injective, if and only if \(\cP\) is aperiodic, that is, \(\Omega_{\cP}\) contains no periodic elements.
\end{corollary}

\begin{proof}
Since \(\cP\) is return discrete, it is also well-separated (Lemma \ref{lem:return discrete => well-separated}). Then \(\Omega = \Omega_{\cP}\) is \(L\)-sub, with substitution map satisfying \(\sub(\cP') = \cP\), by Theorem \ref{thm:predecessor LI}. The same result gives \(\cP' \LIs \cP\). We also have \(\Omega = \Omega_{\cP}\) is compact Hausdorff, by Proposition \ref{prop:well-sep <=> HD}, so \(\sub\) is injective if and only if \(\cP\) is aperiodic by Corollary \ref{cor:injectivity dichotomy}.
\end{proof}

\subsection{Proof of Theorem \ref{thm:recognisability}}
In this section we prove our main recognisability result. As already stated, we assume throughout that \(\Omega\) is compact Hausdorff and expansive \(L\)-sub. We will in fact prove instead the following `local' version of \UC:

\begin{lemma}\label{lem:local recognisability}
Given \(\cP \in \Omega\), \(\cP' \in \sub^{-1}(\cP)\) and \(f \colon \R_{\geq 0} \to \R_{\geq 0}\), define the following property:
\begin{itemize}
	\item[(\(\mathrm{P}_f\))] for all \(x \in E\) and \(R \geq 0\),
	\[
	\text{if} \ \cP[\mathbf{0},f(R)] = \cP[x,f(R)] \ \text{then there exists } g \in \cK_{\cP} \ \text{with} \ \cP'[\mathbf{0},R] = \cP'[L^{-1}(x+g),R] . 
	\]
\end{itemize}
If, for all \(\cP \in \Omega\) and \(\cP' \in \sub^{-1}(\cP)\), there exists some \(f\) for which \(\mathrm{P}_f\) holds for this pair, then \(\sub\) satisfies \UC.
\end{lemma}

\begin{proof}
Let \(\cP\), \(\cP' \in \sub^{-1}(\Omega)\) be arbitrary and suppose \(\mathrm{P}_f\) holds. Take any \(\cU \in \Omega\) for which we also have \(\sub(\cU) = \cP\). Set an arbitrary sequence \(R_n \to \infty\) (e.g., \(R_n = n\)). By Corollary \ref{cor:power fixing LI-classes}, \(\sub\) acts bijectively on LI-classes, so we must have that \(\cU \LIs \cP'\). In particular,
\begin{equation} \label{eq:lem-UC}
\cU[\mathbf{0},R_n'] = \cP'[L^{-1} x_n,R_n']
\end{equation}
for some \(x_n \in E\), where \(R_n' \coloneqq \max\{R_n,f(R_n)\}\). Substituting and using \(\cP = \sub(\cP') = \sub(\cU)\), we have
\[
\cP[\mathbf{0},R_n'] = \sub(\cU)[\mathbf{0},R_n'] = \sub(\cP')[x_n,R_n'] = \cP[x_n,R_n']
\]
for sufficiently large \(R_n' \geq R_n \to \infty\) since, by Lemma \ref{lem:bigger agreement after substitution}, substitution only increases size of agreement of patches. By \(\mathrm{P}_f\), we have \(\cP'[\mathbf{0},R_n] = \cP'[L^{-1}(x_n+g_n),R_n] = p\) for some \(g_n \in \cK = \cK_{\cP}\). Let \(\Gamma\) be a set of coset representatives of \(L\cK_{\cP'}\) in \(\cK\). Since \(p = \cP'[L^{-1}(x_n+(g_n+Lh)),R_n]\) for any period \(h \in \cK_{\cP'}\), we may choose each \(g_n \in \Gamma\). Moreover, \(\Gamma\) is finite by Lemma \ref{lem:groups of periods FI}. Thus, by restricting to a subsequence, we may assume that each \(g_n = g\) for some \(g \in \Gamma \subseteq \cK\). Hence, shifting this equality of patches by \(-L^{-1} g\) and using Equation \ref{eq:lem-UC}, we have
\[
\cU[\mathbf{0},R_n - \delta] = \cP'[L^{-1}x_n,R_n - \delta] = \cP'[-L^{-1}g,R_n - \delta] = (\cP'+L^{-1}g)[\mathbf{0},R_n-\delta],
\]
where \(\delta\) is chosen with \(L^{-1}\Gamma \subseteq B_\delta\), which we subtract from the radius \(R_n\) of patch agreement when shifting by \(-L^{-1}g\), which applies once \(R_n \geq \delta\), using Lemma \ref{lem:pattern properties} (3). Since \(\cU[\mathbf{0},R_n - \delta] = (\cP + L^{-1}g)[\mathbf{0},R_n - \delta]\) for \(R_n - \delta \to \infty\), we have that \(\cU = \cP' + L^{-1}g\). So the arbitrary pre-images \(\cP'\), \(\cU \in \sub^{-1}(\cP)\) are related by a translation and hence \UC\ holds, as required.
\end{proof}

The above result is an analogue of \cite[Lemma 2.5]{Sol98}, although modified to fix consideration at the origin. As well as simplifying matters (by removing one variable), this is crucial: the two-pointed version (where we replace \(\mathbf{0}\) with an arbitrary \(y \in E\) and \(L^{-1} y \in E\) in the left- and right-hand equations of Property \(\mathrm{P}_f\)) fails in the non-repetitive case. Moreover, its statement is natural in light of the origin-centric Proposition \ref{prop:full patch at origin}, needed later. Note that, in the aperiodic case where each \(\cK_{\cP}\) is trivial, condition \(\mathrm{P}_f\) simply says the patches at \(\mathbf{0}\) and \(x\) can only agree to a large radius in \(\cP\) if they also agree in \(L\cP'\) to a relatively large radius.

One may (with some ease, at least in the return discrete case) prove a converse: if \UC\ is satisfied, \(\mathrm{P}_f\) holds for some \(f\), demonstrating necessity as well as sufficiency of some \(\mathrm{P}_f\) for \UC. We do not provide this proof, since we can instead show it simply always holds:

\begin{proposition}\label{prop:L-sub=>Pf}
There exists some \(f \colon \R_{\geq 0} \to \R_{\geq 0}\) for which Property \(\mathrm{P}_f\) of Lemma \ref{lem:local recognisability} holds for all \(\cP \in \Omega\) and \(\cP' \in \sub^{-1}(\cP)\).
\end{proposition}

\begin{proof}
Let \(\cP \in \Omega\) and \(\cP' \in \sub^{-1}(\cP)\) be arbitrary and define \(\cK \coloneqq \cK_{\cP}\). Choose a corresponding hierarchy \((\cP_i)_{i=0}^\infty\), that is, a sequence of patterns \(\cP_i \in \Omega\) with \(\sub(\cP_{i+1}) = \cP_i\), \(\cP_0 = \cP\) and \(\cP_1 = \cP'\). Throughout this proof, to simplify notation, for \(j \in \N_0\) we denote \(\cP_i[x,j] \coloneqq \cP_i[x,V^j]\) (instead of our usual notation \(\cP_i[x,j] = \cP_i[x,B_j]\)). We claim that, for sufficiently large \(k \in \N_0\), if \(\cP[\mathbf{0},k+2] = \cP[x,k+2]\) for some \(x \in E\) then \(\cP'[\mathbf{0},k] = \cP'[x^{-1} + L^{-1} h,k]\) for some \(h \in \cK\). In this case, clearly \(\mathrm{P}_f\) holds (and even independently of \(\cP\) and \(\cP'\)) for a certain function \(f\) (whose explicit description is not important and is neater in terms of \(V^k\)-patches rather than \(r\)-patches).

Let \(\ell \in \N\) be as given from Proposition \ref{prop:full patch at origin}. Let \(\epsilon > 0\) with \(\epsilon \leq (2(\overline{\lambda})^\ell)^{-1}\), where we take \(\overline{\lambda} > 0\) such that \(LB \subseteq \overline{\lambda} B\) for \(B \coloneqq B_1\). We may also take \(\epsilon > 0\) sufficiently small that \(V^k + L^k(\epsilon B) \subseteq V^{k+1}\) for all \(k \in \N_0\), since
\begin{align*}
V^{k+1} &= L^{k+1}(B) + \kappa B \supseteq \lambda L^k(B) + \kappa B = (L^k(B) + (\lambda - 1)L^k(B)) + \kappa B\\
        &= V^k + L^k((\lambda - 1)B) \supseteq V^k + L^k(\epsilon B) ,
\end{align*}
provided \(0 < \epsilon \leq \lambda - 1\). In the second equality, we use that for all \(\alpha\), \(\beta > 0\),
\[
\alpha L^k(B) + \beta L^k(B) = L^k(\alpha B + \beta B) = L^k((\alpha + \beta)B) = (\alpha + \beta)L^k(B) .
\]
By expansivity, we also have \(V^k + L^m(\epsilon B) \subseteq V^{k+1}\) for other \(m \leq k\).

From compactness of \(\Omega\), we may apply FLC (Definition \ref{def:FLC pattern space}) to patches with shape \(U = V^3\), yielding patterns \(\cQ_1\), \ldots, \(\cQ_n \in \Omega\) and \(K \cpt E\) for which, for all \(\cQ \in \Omega\) and \(x \in E\), there exists some \(i \in \{1,\ldots,n\}\) and \(x \in K\) with \(\cQ[x,3] = \cQ_i[y,3]\). Let \(K' = K \times \{1,\ldots,n\}\) denote the disjoint union of \(n\) copies of \(K\). By compactness of \(K' \times K'\), there is some \(M \in \N\) for which, for any \(M\) pairs \(((a_i,m_i),(b_i,n_i)) \in K' \times K'\), we may find \(i \neq j\) with \(m_i = m_j\), \(n_i = n_j\), \(\|a_i - a_j\| \leq \epsilon\) and \(\|b_i - b_j\| \leq \epsilon\).

For a contradiction, suppose that there are infinitely many \(k \in \N_0\) and \(x_k \in E\) with \(\cP[\mathbf{0},k+2] = \cP[x_k,k+2]\) yet \(\cP'[\mathbf{0},k] \neq \cP'[L^{-1}(x_k + g),k]\) for all \(g \in \cK\). For each such \(k\), consider the patches \(\cP_k[\mathbf{0},3]\) and \(\cP_k[L^{-k}x_k,3]\). From FLC, we may write \(\cP_k[\mathbf{0},3] = \cQ_{m_k}[a_k,3]\) and \(\cP_k[L^{-k} x_k,3] = \cQ_{n_k}[b_k,3]\) for \(a_k\), \(b_k \in K\) and \(m_k\), \(n_k \in \{1,\ldots,n\}\). By considering \(M\) different values of \(k \geq \ell\), we obtain corresponding pairs \((a_k,m_k) \in K'\) and \((b_k,n_k) \in K'\). By our choice of \(M\), there exist indices \(j > i \geq \ell\) for which \(m_i = m_j = m\), \(n_i = n_j = n\), \(\|a_i - a_j\| \leq \epsilon\) and \(\|b_i - b_j\| \leq \epsilon\). In summary,
\[
\cP_i[\mathbf{0},3] = \cQ_m[a_i,3]\ , \ \cP_i[L^{-i} x_i,3] = \cQ_n[b_i,3]\ , \ \cP_j[\mathbf{0},3] = \cQ_m[a_j,3]\ , \ \cP_j[L^{-j} x_j,3] = \cQ_n[b_j,3].
\]
By shifting the third equation above by \(u \coloneqq a_i - a_j \in \epsilon B\) and the fourth by \(v \coloneqq b_i - b_j \in \epsilon B\) (using Lemma \ref{lem:pattern properties} (3), with \(V^2 + \epsilon B \subseteq V^3\)), and combining with the first and second equations, respectively (each restricted to \(V^2\)), we obtain
\begin{equation}\label{eq:UC1}
\cP_i[\mathbf{0},2] = \cP_j[u,2]\ , \ \cP_i[L^{-i}x,2] = \cP_j[L^{-j}y + v,2] ,
\end{equation}
where we denote \(x \coloneqq x_i\) and \(y \coloneqq x_j\). We recall that, by definition of \(x_i\) and \(x_j\),
\begin{equation}\label{eq:UC2}
\cP[\mathbf{0},i+2] = \cP[x,i+2]\ , \ \cP[\mathbf{0},j+2] = \cP[y,j+2]
\end{equation}
but, for all \(g \in \cK\),
\begin{equation}\label{eq:UC3}
\cP'[\mathbf{0},i] \neq \cP'[L^{-1}(x+g),i]\ , \ \cP'[\mathbf{0},j] \neq \cP'[L^{-1}(y+g),j] .
\end{equation}
We will construct some \(h \in \cK\) contradicting the right-hand non-equality for \(g=-h \in \cK\).

Begin by substituting Equation \ref{eq:UC1} \(i\) times, by Lemma \ref{lem:bigger agreement after substitution}, and denoting \(\eta \coloneqq j-i \in \N\), to obtain
\begin{equation}\label{eq:UC4}
\cP[\mathbf{0},i+2] = \cP_\eta[L^i u,i+2]\ , \ \cP[x,i+2] = \cP_\eta[L^{-\eta} y + L^i v,i+2] .
\end{equation}
Combining this with Equation \ref{eq:UC2} gives
\begin{equation}\label{eq:UC5}
\cP_\eta[L^i u,i+2] = \cP[\mathbf{0},i+2] = \cP[x,i+2] = \cP_\eta[L^{-\eta} y + L^i v,i+2] .
\end{equation}
Substituting this a further \(\eta\) times gives
\[
\cP[L^j u,j+2] = \cP[y + L^j v,j+2] .
\]
Recall that \(u \in \epsilon B\) and that, for all \(j \in \N_0\), we have \(V^{j+2} \supseteq V^{j+1} + L^j(\epsilon B)\). Thus, we may shift the above equation by \(L^j v \in L^j(\epsilon B)\) to obtain
\begin{equation}
\cP[L^j(u-v),j+1] = \cP[y,j+1] = \cP[\mathbf{0},j+1] ,
\end{equation}
where the latter is given by restricting the right-hand side of Equation \ref{eq:UC2} to \(V^{j+1}\). The above shows that \(h \coloneqq L^j(u-v)\) is a `short' return vector to the relatively large patch of shape \(V^j\) (in fact, even \(V^{j+1}\)). More precisely, we have \(h \in V^{j-\ell}\) since, by assumption, \(h = L^j(v - u) \in L^j(2\epsilon B)\), so
\[
h \in L^{j-\ell}L^\ell(2\epsilon B) \subseteq L^{j-\ell}((\overline{\lambda})^\ell 2\epsilon B) \subseteq L^{j-\ell} B \subseteq L^{j-\ell} B + \kappa B = V^{j-\ell} ,
\]
where we recall, for the second inclusion, that we defined \(\epsilon \leq (2(\overline{\lambda})^\ell)^{-1}\). In the first, we use that \(L(B) \subseteq \overline{\lambda}B\), hence \(L(\mu B) \subseteq \overline{\lambda}(\mu B)\) for any \(\mu \geq 0\) (here, \(\mu = 2\epsilon\)) and thus, by induction, \(L^\ell(\mu B) \subseteq (\overline{\lambda})^\ell \mu B\).

Summarising, we have \(h = L^j(u-v) \in V^{j-\ell}\) for which \(\cP[h,j+1] = \cP[\mathbf{0},j+1]\). Recall that \(\ell\) was chosen from Proposition \ref{prop:full patch at origin} so that a centre of every \(V^n\)-patch occurs in \(V^{n+\ell}\), that is, given \(p \in \lang_n(\cP)\), we have that \(p = \cP[x,V^n]\) for some \(x \in V^{n+\ell}\). Taking \(n = j-\ell \in \N_0\) here, since \(h \in V^{j-\ell}\), it follows from Lemma \ref{lem:close repeat => period} (in short: a \(U\)-short return vectors of a patch containing centres of all \(U\)-patches must be a period) that \(h \in \cK\).

Thus, it remains to show that \(\cP'[\mathbf{0},j] = \cP'[L^{-1}(y-h),j]\), contradicting the assumed non-equality of Equation \ref{eq:UC3}. But this may be done similarly to earlier, just shifting down \(j-1\) rather than \(j\) levels. Explicitly, from Equation \ref{eq:UC5}, substitute a further \(\eta-1\) times to obtain \(\cP_1[L^{j-1} u,j+1] = \cP_1[L^{-1} y + L^{j-1} v,j+1]\). Just as before, due to our choice of small \(\epsilon\), we have \(V^{j+1} \supseteq V^j + L^{j-1}(\epsilon B)\), so we may shift this equality of patches by \(-L^{j-1} u \in L^{j-1}(\epsilon B)\) to obtain \(\cP_1[\mathbf{0},j] = \cP_1[L^{-1} y + L^{j-1}(v-u),j]\). Since \(L^{j-1}(v-u) = L^{-1}(L^j(v-u)) = L^{-1}(-h)\), the result follows.
\end{proof}

\begin{remark}
Notice that, if \(\cP\) is non-periodic, then \(h=\mathbf{0}\) and the last few steps are simpler; in particular, it would have been sufficient to start with agreement of just \(V^{k+1}\)-patches in \(\cP\) at the start (rather than \(V^{k+2}\)). Also note that there there is considerable room to work with, in the sense that property \(\mathrm{P}_f\) was shown to always hold and with a relatively slow-growing function \(f\), independently of \(\cP\) and \(\cP'\). Through such considerations, it is possible to replace linear expansivity of \(L\) with a weaker expansivity condition, something we will return to in future work.
\end{remark}

The main Theorem \ref{thm:recognisability} is now a corollary of the above two results: for all \(\cP \in \Omega\) and \(\cP' \in \sub^{-1}(\cP)\), we have that \(\mathrm{P}_f\) is satisfied for some \(f\) by Proposition \ref{prop:L-sub=>Pf} and hence \UC\ holds by Lemma \ref{lem:local recognisability}.

\section{Examples}\label{sec:examples}

In this section we give examples demonstrating the range of patterns that our main recognisability results may be applied to.

\begin{example}[Half and half patterns]
Here we demonstrate our recognisability results on two simple non-minimal patterns, the first return discrete and the second not. Take the tiling of \(E = \R^1\) of interval \([n,n+1]\) tiles, for \(n \in \Z\), coloured white for \(n < 0\) and black for \(n \geq 0\) (see \cite[Figure 1.6]{Sad08}), with associated pattern \(\cT\) (see Section \ref{sec:patterns from tilings}). Then \(\cT\) is \(L\)-sub with respect to any inflation \(Lx = kx\), for \(k \in \N\), with itself as predecessor. Subdivision replaces an inflated tile \(Lx\) with \(k\) unit-length tiles of the same colour. This extends over the \(L\)-sub hull \(\Omega = \Omega_{\cT}\), defining the LD map \(S \colon L\Omega \LDmap \Omega\). Take \(k \geq 2\), so that \(L\) is expansive. The induced substitution map \(\sub = S \circ L\) preserves LI-classes so, by Proposition \ref{prop:number of pre-images} (and Lemma \ref{lem:periods under LI and LD}), each \(\cQ \in \Omega\) has exactly \([\cK_{\cQ},L\cK_{\cQ}]\) pre-images under \(\sub\). Explicitly, the hull \(\Omega\) consists of the orbit \(\cT+E\), which we may identify with \(E\) (where substitution acts by \(\sub(\cT+x) = \cT+Lx\)), along with two other orbits, of the periodic tiling \(\cB\) of all black tiles and \(\cW\) of all white tiles. The tiling space is `slinky-like', with two embedded circles (the orbits of \(\cB\) and \(\cW\)), with the other orbit \(\cP+x\) spiralling towards \(\cW+E\) as \(x \to \infty\), and towards \(\cB+E\) as \(x \to -\infty\). We can also directly inspect that \(\sub^{-1}(\cW)\) (and similarly for \(\cB\), and translates) contains \(k = [\cK_{\cW}:L\cK_{\cW}]\) points, whereas \(\sub \colon \cT+E \to \cT+E\) is injective. In particular, \(\sub\) is not injective since \(\Omega\) contains periodic points (and we have return discreteness), as stated by the general dichotomy result Corollary \ref{cor:injectivity dichotomy}.

We may consider a non-discrete analogue of the above, the pattern \(\cP \colon \R^1 \to \{B,W\}\), where \(\cP[x] = W\) if \(x \leq 0\) and \(\cP[x] = B\) if \(x > 0\). In this case, \(\Omega = \Omega_{\cP}\) consists, again, of the orbit \(\cP+E\) but now just \emph{two} other patterns (rather than two and their infinitely many translates), the pattern \(\cB'\) whose points are all labelled \(B\), and \(\cW'\) whose points are all labelled \(W\), each of which has periods \(\cK = E\). Again, substitution preserves LI-classes and, by Proposition \ref{prop:number of pre-images}, \(\sub\) is injective since each \([\cK:L\cK] = 1\) in this case (or, by Corollary \ref{cor:injectivity dichotomy}, \(\sub\) is injective since no \(\cK_{\cQ}\) has non-trivial discrete component). Note that \(\Omega \cong [0,1]\), with interior identified with \(\cP+E\), and end points \(\cB'\) and \(\cW'\). This is unusual in the context of Aperiodic Order, where the hull is usually a torus bundle with discrete fibres, see \cite{SW03}. However, this does not apply here, as we do not have return discreteness (and so no MLD relation to tilings). Nonetheless, it is sufficiently well behaved so as to not be excluded from our main recognisability results.
\end{example}

\begin{example}[\(L\)-sub patterns with unusual hulls]
\label{exp:unusual hulls}
As we saw above, dropping return discreteness allows for less standard hulls. Here we define some other, similarly unusual \(L\)-sub hulls of patterns of \(E = \R^2\) labelled in \(S = \{0,1,2\}\):
\begin{enumerate}
	\item \(\cP_1[\mathbf{0}] = 1\) and \(\cP_1[v] = 0\) otherwise;
	\item \(\cP_2[(x,y)] = 1\) if \(x \in \Z\) and \(y \geq 0\), with \(\cP_2[v] = 0\) otherwise;
	\item \(\cP_3[(x,y)] = 1\) if \(x \in \Z\) and \(y = 0\), with \(\cP_3[v] = 0\) otherwise;
	\item \(\cP_4[(x,y)] = 1\) if \(x \in \Z\) and \(y \geq 0\), \(\cP_4[(x,y)] = 2\) if \(x \in \Z\) and \(y < 0\), and \(\cP_4[v] = 0\) otherwise.
\end{enumerate}
Then \(\cP_1\) decorates \(\R^2\) with a single mark at the origin, \(\cP_2\) marks the upper-half plane periodically with half-vertical lines, \(\cP_3\) marks the \(x\)-axis periodically with points and \(\cP_4\) defines the same markings as \(\cP_2\), as well as similar lines in the lower half-plane using a different colour.

We leave it to the reader to identify that \(\Omega_{\cP_1} \cong S^2\) (the \(2\)-dimensional sphere), \(\Omega_{\cP_2} \cong D^2\) (the closed \(2\)-dimensional disc), \(\Omega_{\cP_3}\) is homeomorphic to the quotient of \(S^2\) with poles pinched to a single point and \(\Omega_{\cP_4} \cong S^1 \times [0,1]\), the cylinder. In each case, the patterns are \(L\)-sub for \(L(x) = 2x\) with themselves as predecessor. The associated substitution map is injective for \(\cP_1\) and not for the others. Indeed, the only periodic element of \(\Omega_{\cP_1}\) is the void pattern (see Example \ref{exp:trivial patterns}), which is not \emph{discretely} periodic, whilst \(\cP_2\), \(\cP_3\) and \(\cP_4\) have periods \(\cK = \Z \times \{0\}\), each with \(2 = [\cK : L\cK]\) pre-images in their hulls.
\end{example}

\begin{example}[Trivial patterns]\label{exp:trivial patterns}
The `void' pattern \(\cP \colon E \to \{\emptyset\}\) defined by \(\cP[x] \coloneqq \emptyset\) can be thought of as a `point set' that is devoid of any points; it occurs in any hull of a non-relatively dense point set. Its hull is \(\Omega_{\cP} = \{\cP\}\), a single-point space. We have that \(\cP\) is well-separated, FLC and \(L\)-sub (for any linear automorphism \(L\)), and recognisability still applies: substitution \(\sub \colon \Omega_{\cP} \to \Omega_{\cP}\) is (obviously) injective, and indeed \([\cK : L\cK] = [E : LE] = [E : E] = 1\).

In the other extreme, consider a pattern with \(\cP[x] = \cP[y]\) if and only if \(x = y\). Then \(\cP\) is well-separated, so its hull \(\Omega\) is Hausdorff, but it is not FLC so \(\Omega\) is not compact. Indeed, \(\Omega = \cP + E  \cong E\). Up to MLD, we can take \(\cP \in E^E\) with \(\cP = \id\) i.e., each \(x \in E\) is labelled by itself. Alternatively, up to MLD, it may be given as a periodic tessellation of unit (hyper)cubes but where each tile is assigned a unique label. Clearly, \(\cP\) is \(L\)-sub for any linear automorphism \(L\), all elements in \(\Omega\) are non-periodic and substitution \(\sub \colon \Omega \to \Omega\) may be identified with \(L \colon E \to E\). It is not FLC though, so is not covered by our recognisability results.
\end{example}

\begin{example}[Fibonacci bar codes]\label{exp:Fibonacci bar code}
Let \(\Omega_\mathrm{fib}\) be the hull of Fibonacci tilings, whose tiles, \(a\) and \(b\), have lengths \(\varphi\) (the golden mean) and \(1\), respectively, generated by the substitution \(a \mapsto ab\), \(b \mapsto a\). For each tiling of \(\Omega_\mathrm{fib}\), define the 2d pattern \(\cP \colon \R^2 \to \{B,W\}\) by \(\cP[(x,y)] = B\) if \(x\) belongs to an \(a\) tile and \(\cP[(x,y)] = W\) otherwise. One may think of this as colouring the plane with a `vertical bar code', which is aperiodic (following a Fibonacci tiling) in the \(x\) direction.

The above defines a new pattern space \(\Omega\) of \(2\)-dimensional patterns, with \(\Omega \cong \Omega_\mathrm{fib}\) and each element with periods \(\cK = \{0\} \times \R\). It is also naturally \(L\)-sub (where expansion is by the golden ratio in the \(x\)-direction) and injectivity of \(\sub\) now follows not from aperiodicity, but rather that each \(\cK\) has trivial discrete component.

One may modify this, by additionally marking the \(x\)-axis of the bar-code patterns (and also taking all their translates, and limiting patterns with no such marking). The result will be a non-minimal pattern space \(\Omega' \cong \Omega_\mathrm{fib} \times S^1\), with \(\cK_{\cP} = \{\mathbf{0}\}\) for those patterns with a horizontal marking, and \(\cK_{\cP} = \{0\} \times \R\) otherwise (i.e., for \(\cP \in \Omega\) as above). If we instead added a decoration of horizontal lines periodically repeating with unit gaps (commensurate with the expansion in the \(y\)-direction), we still obtain a tiling space \(\Omega'' \cong \Omega_\mathrm{fib} \times S^1\) but it is now minimal and substitution is no longer injective.
\end{example}

\begin{example}[Amalgams]
Consider the bi-infinite fixed point \(\sub^\infty(b) | \sub^\infty(a) = \cdots baab | abba \cdots\) of the symbolic Thue--Morse substitution, and its geometric realisation as a tiling of (marked) unit interval tiles, where \(|\) indicates the location of the origin. Delete all tiles to the left of the origin. This defines an unusual pattern, with \(\cP[x] = \cP[y]\) whenever the tiles lying over \(x\) agree exactly with the tiles lying over \(y\), up to a translation from \(x\) to \(y\). For example, \(\cP[x] = \cP[y]\) for all \(x\), \(y < 0\) and \(\cP[x] \neq \cP[y]\) for any \(x < 0\) and \(y \geq 0\).

This is \(L\)-sub with respect to \(L(x) = 2x\) and \(\cP = \cP'\) its own predecessor. Substitution is the usual one for Thue--Morse, but with void mapping to void. It is FLC and well-separated, so has compact Hausdorff hull \(\Omega\). Clearly, the standard Thue--Morse tiling space \(\Omega_\mathrm{TM}\) is a sub-pattern space, as they are locally indistinguishable from \(\cP\). We also have the `empty' pattern \(\emptyset \in \Omega\), as described in Example \ref{exp:trivial patterns}. These, along with the translational orbit of \(\cP\), give all patterns locally indistinguishable from \(\cP\), so that \(\Omega\) is the (disjoint) union \(\Omega = \Omega_\mathrm{TM} \cup (\cP + \R) \cup \{\emptyset\}\).

We may think of this as the Thue--Morse tiling space `with a stick': the orbit of \(\cP\) gives an extra copy of \(\R\) that spirals towards \(\Omega_\mathrm{TM}\), as we translate in one direction, whilst in the other direction it converges to \(\emptyset \in \Omega\). Since \(\cP\) is FLC, Corollary \ref{cor:injectivity dichotomy} applies: no \(\cQ \in \Omega\) is discretely periodic, so \(\sub \colon \Omega \to \Omega\) is injective. Indeed, directly, the restriction of \(\sub\) to the minimal component \(\Omega_\mathrm{TM}\) is injective (as can also be checked directly).

We could define a discrete analogue, where we continue with a periodic tiling on the left-hand side of \(\cP\). The hull of such a pattern now contains discretely periodic points and substitution is non-injective (the hull being a union of \(\Omega_\mathrm{TM}\), a circle \(S^1\) for the periodic tilings, where substitution is non-injective, and a leaf \(\cP+\R\) spiralling towards \(\Omega_\mathrm{TM}\) in one direction and to \(S^1\) in the other). Our theorems cover other such `amalgams', as already seen in some other examples above.
\end{example}

\begin{example}[Symbolic substitution with some periodic elements having unique pre-images]\label{exp:subs not LI}
The following shows we cannot simplify Corollary \ref{cor:uniqueness of pre-images} to uniqueness of pre-images being equivalent to discrete non-periodicity: discretely periodic points can have unique pre-images under low powers of substitution (even though they eventually have multiple pre-images under higher powers).

As a very simple example, consider the standard periodic tiling of \(E = \R^1\) by unit interval tiles, with the origin lying on a tile boundary. Consider two patterns \(\cU\) and \(\cV\) defined by certain labellings of this: \(\cU\) is given by labelling all tiles by an \(A\), and \(\cV\) is given by repeating the block \(CBBBC\). Consider the pattern space \(\Omega = \Omega_{\cU} \cup \Omega_{\cV} \cong S^1 \sqcup S^1\). Then it is \(L\)-sub, with respect to inflation map \(L(x) = 5x\) and subdivision defined by the constant length substitution
\[
A \mapsto CBBBC, \ B \mapsto AAAAA, \ C \mapsto AAAAA .
\]
Since \(\cK_{\cU} = \Z\) and \(\cK_{\cV} = 5\Z\), one may think of the circle \(\Omega_{\cV}\) as five times longer than that of \(\Omega_{\cU}\). Substitution \(\sub \colon \Omega \to \Omega\) wraps \(\Omega_{\cV}\) 25 times around \(\Omega_{\cU}\), whereas it maps \(\Omega_{\cU}\) injectively onto \(\Omega_{\cV}\). In particular, \(\sub^{-1}(\cV) = \{\cU\}\) and \(\# \sub^{-1}(\cU) = 1\), despite \(\cU\) being (discretely) periodic. Passing to the second power, however, \(\sub\) preserves LI-classes and every element has 25 pre-images.

Such behaviour can even happen for hulls of individual patterns. For instance, extend the above substitution to include new letters \(S_1\) and \(S_2\), where we let \(S_1 \mapsto S_2 A S_1 B S_2\) and \(S_2 \mapsto S_1 B S_2 A S_1\). Centring an \(S_1\) tile at the origin and iterating, we generate a fixed point tiling \(\cT\) corresponding to the following symbolic sequence (with origin located at the centre of the under-barred tile):
\[
\overbrace{\cdots \overbrace{AAAAA}^{\sub(B)} \overbrace{AAAAA}^{\sub(C)}}^{\sub^2(A)} \overbrace{\overbrace{S_1 B S_2 A S_1}^{\sub(S_2)} \overbrace{CBBBC}^{\sub(A)} \overbrace{S_2 A \underline{S_1} B S_2}^{\sub(S_1)} \overbrace{AAAAA}^{\sub(B)} \overbrace{S_1 B S_2 A S_1}^{\sub(S_2)}}^{\sub^2(S_1)} \overbrace{\overbrace{CBBBC}^{\sub(A)} \overbrace{CBBBC}^{\sub(A)} \cdots}^{\sub^2(B)}
\]
Clearly \(\cT\) is \(L\)-sub, for expansion \(L(x) = 5x\), with \(L\cT' \LD \cT\) defined by the substitution and \(\cT = \cT'\) its own predecessor. Its hull \(\Omega_{\cT}\) contains the hulls \(\Omega_{\cU} \cup \Omega_{\cV}\) above, where the discretely periodic translates of \(\cV\) still have unique pre-images.
\end{example}

\begin{example}[Variations on the Penrose tilings]\label{exp:penroses}
Any Penrose tiling (by kites and darts, or rhombs), or hull of Penrose tilings, is \(L\)-sub. The property of \(L\)-sub is invariant under MLD equivalence (see Proposition \ref{prop:MLD invariance of L-sub}, for instance). Thus, the framework would equally well apply when starting from other MLD representations, such as by Delone sets of vertices or by the coverings of Gummelt decagons, see \cite{Gum96} and Figure \ref{fig:gummelt}.

There is a related substitution \(\sub\), with inflation the golden mean squared, partially subdividing a pentagon \(p\), see \cite{Pen79} and Figure \ref{fig:several subs}. To define his famous tilings, Penrose found a systematic way of filling the gaps. Instead, let us consider the patterns generated that keep the gaps (we thank Edmund Harriss for pointing out this example as one covered by our setting). For instance, we may start with an origin-centred pentagon and iteratively apply substitution to it (and rotate 180 degrees at each step), giving a nested sequence \(P_1 \subset P_2 \subset P_3 \subset \cdots\) of patches of tiles, with union a pattern \(\cP\) with hierarchical structure. Indeed, \(L\cP'\) subdivides to \(\cP\), where \(\cP'\) is the (locally isomorphic) 180 degree rotation of \(\cP\). Since \(\cP\) has arbitrarily large holes, the void pattern is an element of its hull. It is not too hard to see that if \(\cQ \in \Omega_{\cP}\) contains a tile, it cannot be periodic, so the void pattern is the only non-periodic element of \(\Omega_{\cP}\). Thus (e.g., by Corollary, \ref{cor:injectivity dichotomy}) \(\sub \colon \Omega \to \Omega\) is injective. Indeed, it is visually clear that small sub-patches of \(\cP\) can be grouped, using a local rule, into supertiles; the inverse \(S^{-1} \colon \Omega \LDmap L\Omega\) of subdivision is LD, as claimed by Corollary \ref{cor:injectivity dichotomy}.
\end{example}

\subsection{\(L\)-sub pattern spaces from iterated inflate, replace rules}
\label{L-sub pattern spaces from iterated inflate, replace rules}

Most examples of substitutional spaces of tilings \cite{AP98} or Delone sets \cite{LW03} start with a substitution rule on a finite set of `atoms', such as a finite set of prototiles or coloured points. This is then iterated, to generate a `language' for patterns admitted by the substitution. Our approach has been, instead, to start with the substitution map on the space of patterns. Here, we briefly justify why this covers the former type of construction.

For simplicity, let us focus on a tiling space generated by an FLC substitution rule \(\sub\). This means that there are only finitely many so-called prototiles, and only finitely many ways that they may meet in generated tilings, modulo translation. Let \(\Omega\) be any `large' FLC and well-separated pattern space to which \(\sub\) restricts (it may aid the reader to see the next section on digit substitutions). For instance, in the case of a 1d substitution coming from a symbolic substitution rule, \(\Omega\) could be a suspension of the full shift. The case of higher dimensional tilings is only slightly more complicated, one may consider the space of \emph{all} tilings (with tiles translates from the prototile set) with some restriction on local patches (e.g., the way tiles can meet) coming from the FLC condition on substitution; by imposing this restriction for sufficiently large patches, one may ensure that \(\sub \colon \Omega \to \Omega\) is well-defined (i.e., that substitutions of tilings remain tilings without any partial overlaps), even for non-stone tile inflations, when these may be iterated indefinitely when starting with single prototiles.

We may then restrict \(\Omega\) to its subset \(\sH\) of hierarchical elements (see Definition \ref{def:hierarchy}). Equivalently, by Proposition \ref{prop:restriction to hierarchical elements}, we restrict substitution to its eventual range, yielding a surjective substitution map to which our main recognisability results apply.

Now, the standard construction of the space of tilings admitted by \(\sub\) is to consider the space \(\Omega_\sub\) of tilings whose finite sub-patches are each in some `supertile' \(\sub^k(t)\), for some translated prototile \(t\) and \(k \in \N_0\). But all such elements are necessarily hierarchical, so our results cover them (in fact, one often has a strict inclusion \(\Omega_\sub \subset \sH\), so even more patterns than usual are covered in this case). Indeed, \(\Omega_\sub \subseteq \Omega\) and by a standard Cantor diagonalisation argument \(\sub \colon \Omega_\sub \to \Omega_\sub\) is surjective. Similar comments apply to the setting of substitution Delone (multi)sets.

\subsection{Digit substitutions}
\label{sec:digit subs}

We conclude by demonstrating recognisability on all digit substitutions, a term used in \cite{FM22,Vin00}, although they are also variously called multidimensional constant shape substitutions \cite{Cab23, CL25} or lattice substitution systems \cite{LMS03, FS07}. These may be considered as multi-dimensional analogues of constant length substitutions from the symbolic setting.

Let \(E = \R^d\) and \(L \colon E \to E\) be a linear expansion with \(L\Z^d < \Z^d\). That is, \(L(x) = Mx\) for an integer \(d \times d\) matrix, with all eigenvalues strictly greater than one in modulus. Choose a finite `alphabet' \(A\) and `shape' \(F\) of the substitution, which is a fundamental domain \(F \subset \Z^d\) of coset representatives of \(L\Z^d < \Z^d\) i.e., we have a disjoint union \(\Z^d = \bigcup_{n \in \Z^d} (F + L(n))\). In particular, \(\#F = \det(M)\). A \textbf{digit substitution} is simply a function \(\sub \colon A \to A^F\), see the top of Figure \ref{fig:digit sub}.

\begin{figure}
\begin{center}
\includegraphics[width=0.8\textwidth]{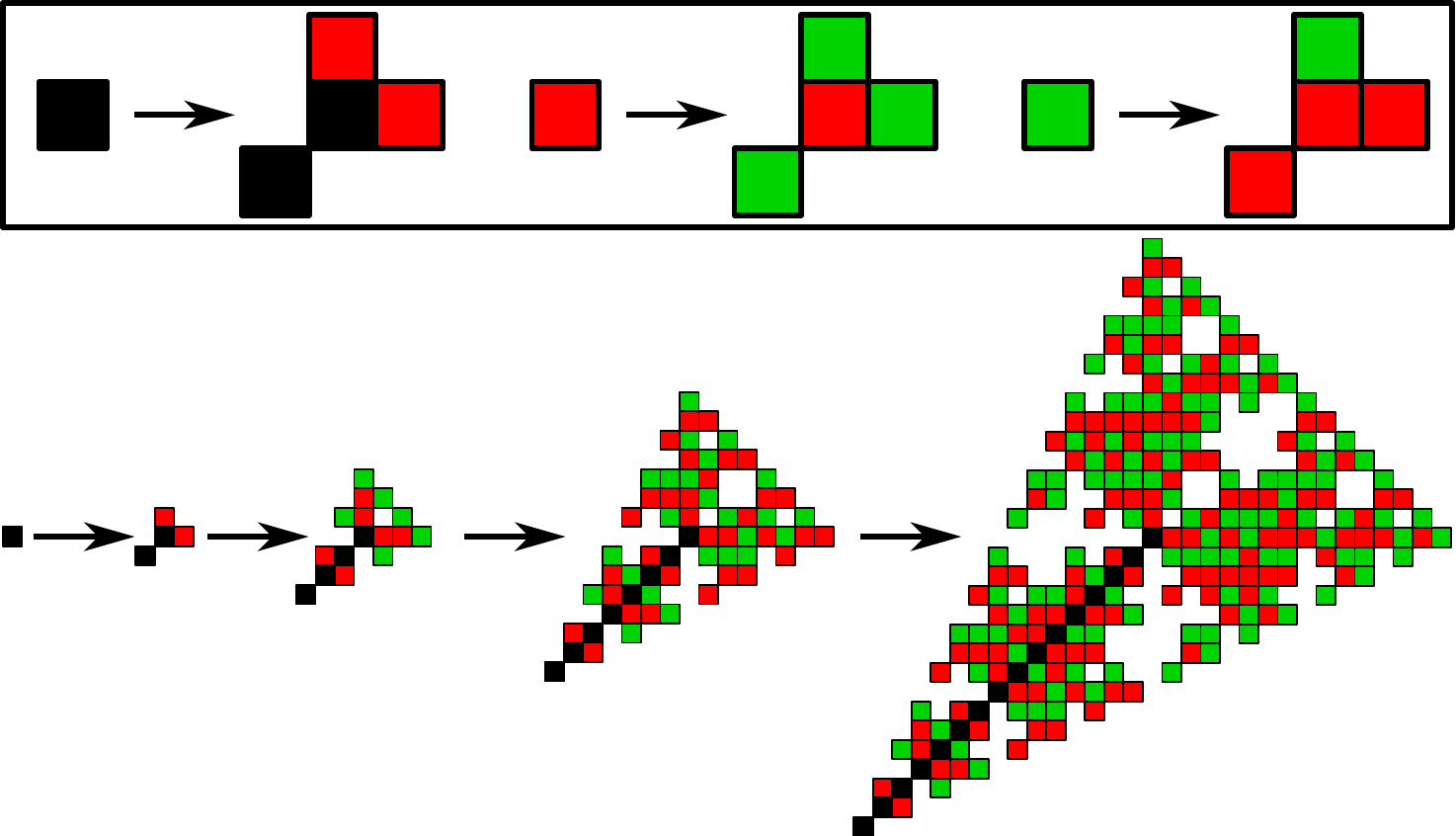}
\caption{
Example of a \(d=2\) dimensional digit substitution with 3-letter alphabet (with elements identified with different colours, above) and first four iterations of it defining supertiles (below).
\label{fig:digit sub}
}
\end{center}
\end{figure}

A digit substitution naturally extends to a substitution map \(\sub \colon A^{\Z^d} \to A^{\Z^d}\) on the full \(\Z^d\)-shift, and to its suspension, the \(\R^d\)-dynamical system \(\Omega\). More precisely, given any \(w \in A^{\Z^d}\), identify it with the pattern \(\cP \colon \R^d \to A \cup \{\emptyset\}\), where \(\cP[x] = w(x)\) when \(x \in \Z^d\) and \(\cP[x] = \emptyset\) otherwise. Let \(\Omega\) consist of all such patterns, and all their translates under \(\R^d\). Given \(\cP \in \Omega\), its inflation \(L\cP \in L\Omega\) is given by \((L\cP)[Lx] = \cP[x]\), a colouring of some translate of \(L \Z^d\) (namely, the translate \(\Z^d + Lx\), when \(\cP\) is a colouring of \(\Z^d + x\)). The original digit substitution naturally defines an LD map \(S \colon L \Omega \to \Omega\). Explicitly, given \(L\cP \in L\Omega\) and \(x \in \R^d\), if \((L\cP)[x-f] = a \in A\) for a (necessarily unique) \(f \in F\), then we let \(S(L\cP)[x] = (\sub(a))(f)\), and \(S(L\cP)[x] = \emptyset\) otherwise. Since this definition only depends on the patch of shape \(-F\) at \(x\) in \(L\cP\), this description makes it clear that \(S\) is indeed an LD map. Note that we are implicitly thinking of our patterns \(\cP \in \Omega\) as colourings of translates of \(\Z^d\), although an equivalent perspective (modulo MLD equivalence) is that each such pattern is defined by an arbitrary colouring of a standard unit hypercube tessellation.

We may now, as usual, let \(\sub \coloneqq S \circ L \colon \Omega \to \Omega\). It is easy to show that \(\Omega\) is FLC and return discrete. The set \(\sH \subseteq \Omega\) of hierarchical elements is the eventual range of \(\sub\), see Proposition \ref{prop:restriction to hierarchical elements}, to which \(\sub\) restricts as a surjective map, making \(\sH\) an \(L\)-sub pattern space. By Corollary \ref{cor:uniqueness of pre-images}, non-periodic elements have unique pre-images and there is a power \(N \in \N\) of substitution (one fixing LI-classes) so that periodic elements have multiple pre-images under \(\sub^N\). By Proposition \ref{prop:number of pre-images}, the number of pre-images of any given \(\cP \in \sH\) is given by \([\cK_{\cP} : L\cK_{\cP'}]\), for arbitrary \(\cP' \in \sub^{-1}(\cP)\) (all of which are translation equivalent).

As in the previous subsection, it is in fact more typical to consider the space \(\Omega_\sub\) (or rather its restriction to the symbolic version in \(A^{\Z^d}\)) of elements whose finite patches are all contained in translates of supertiles (given by iteratively substituting a single letter, as depicted in the bottom of Figure \ref{fig:digit sub}), see \cite{Cab23} for more details. However, as before, we always at least have \(\Omega_\sub \subseteq \sH\), so all of the above results apply to this smaller \(L\)-sub pattern space too. Injectivity of \(\sub\) on such a geometric hull is equivalent to the substitution on \(A^{\Z^d}\) being recognisable in the symbolic sense of \cite[Definition 3.4]{Cab23}.

\bibliography{references}
\bibliographystyle{alpha}

\end{document}